\numberwithin{equation}{section}
\newtheorem{prop}{Proposition}[section]
\newtheorem{theo}[prop]{Theorem}
\newtheorem*{theo*}{Theorem}
\newtheorem{lemm}[prop]{Lemma}
\newtheorem*{claim}{Claim}
\newtheorem{defi}[prop]{Definition}
\newtheorem{rema}[prop]{Remark}
\theoremstyle{definition}
\newtheorem{conj}[prop]{Conjecture}
\newcommand{\RR}{\mathbf{R}}
\newcommand{\ZZ}{\mathbf{Z}}
\newcommand{\cB}{\mathcal B}
\newcommand{\cD}{\mathcal D}
\newcommand{\cF}{\mathcal F}
\newcommand{\cH}{\mathcal H}
\newcommand{\cI}{\mathcal I}
\newcommand{\cL}{\mathcal L}
\newcommand{\sE}{\mathscr{E}}
\newcommand{\uu}{\mathbf{u}}
\newcommand{\vv}{\mathbf{v}}
\newcommand{\ww}{\mathbf{w}}
\DeclareMathOperator{\proj}{proj}
\DeclareMathOperator{\spt}{spt}
\DeclareMathOperator{\dist}{dist}
\DeclareMathOperator{\Ric}{Ric}
\DeclareMathOperator{\Div}{div}
\DeclareMathOperator{\secondfund}{II}
\newcommand{\pa}[2]{\frac{\partial #1}{\partial #2}}
\newcommand{\td}[2]{\frac{d #1}{d #2}}
\newcommand{\bangle}[1]{\left\langle #1 \right\rangle}
\newcommand{\ep}{\varepsilon}
\numberwithin{equation}{section}
\begin{document}
	
	\title[A polyhedron comparison theorem in positive scalar curvature]{A polyhedron comparison theorem for 3-manifolds with positive scalar curvature}
	
	\author{Chao Li}
	\address{Department of Mathematics, Stanford University}
	\email{rchlch@stanford.edu}
	
	\begin{abstract}
		The study of comparison theorems in geometry has a rich history. In this paper, we establish a comparison theorem for polyhedra in 3-manifolds with nonnegative scalar curvature, answering affirmatively a dihedral rigidity conjecture by Gromov. For a large collections of polyhedra with interior non-negative scalar curvature and mean convex faces, we prove the dihedral angles along its edges cannot be everywhere less or equal than those of the corresponding Euclidean model, unless it is a isometric to a flat polyhedron. 
	\end{abstract}
	
	\maketitle
	
	\section{Introduction}
	A fundamental question in differential geometry is to understand metric/measure properties of Riemannian manifolds under global curvature conditions, and study notions of curvature lower bounds in spaces with low regularity. Such goals are usually achieved via geometric comparison theorems. The quest started with Alexandrov \cite{Aleksandrov51comparison}, who introduced the notion of \textit{sectional} curvature lower bounds for metric spaces via geometric comparison theorems for geodesic triangles. Similar questions for \textit{Ricci} curvature have also attracted a wide wealth of research recently (Cheeger-Colding-Naber theory; see, e.g., \cite{CheegerColding97, CheegerColding00a, CheegerColding00b, ColdingNaber12, CheegerNaber13}; for an optimal transport approach, see, e.g., \cite{LottVillani09} \cite{Sturm06a, Sturm06b, Sturm06c}).
	
	The case of scalar curvature lower bounds, however, is not as well established, possibly due to a lack of satisfactory geometric comparison theory. The first progress in this direction was made by Shi-Tam \cite{ShiTam02positive}, who proved a total boundary mean curvature comparison theorem for regions in manifolds with nonnegative scalar curvature. However, it requires a presumption of the existence of boundary isometric embedding into Euclidean spaces, which is not satisfied for general domains.
	
	As triangles play an essential role in the comparison theorems for sectional curvature, Gromov \cite{Gromov14dirac} suggested that Riemannian \textbf{polyhedra} should be of particular importance for the study of scalar curvature. In this paper, we place our focus specifically in three dimensions, and make the following
	
	\begin{defi}
		Let $P$ be a flat polyhedron in $\RR^3$. A closed Riemannian manifold $M^3$ with non-empty boundary is called a $P$-type polyhedron, if it admits a Lipschitz diffeomorphism $\phi:M\rightarrow P$, such that $\phi^{-1}$ is smooth when restricted to the interior, the faces and the edges of $P$. We thus define the faces, edges and vertices of $M$ as the image of $\phi^{-1}$ when restricted to the corresponding objects of $P$.
	\end{defi}
	
	The first case that Gromov investigated was cube-type polyhedra in three-manifolds with nonnegative scalar curvature ($P=[0,1]^3\subset \RR^3$). Let $(M^n,g)$ be a cube-type polyhedron with faces $F_j$. Let $\measuredangle_{ij}(M,g)$ denote the (possibly nonconstant) dihedral angle between two adjacent faces $F_i$ and $F_j$. Then Gromov proposed that $(M,g)$ cannot simultaneously satisfy:
	\begin{enumerate}
		\item the scalar curvature $R(g)\ge 0$;
		\item each $F_i$ is mean convex;
		\item for all pairs $(i,j)$, $\measuredangle_{ij}(M,g)<\frac{\pi}{2}$.
	\end{enumerate}
	
	Notice that conditions (2) and (3) above may be interpreted as $C^0$ properties of $g$. In fact, a face $F$ is strictly mean convex if and only if it is locally one-sided area minimizing: for any outward compactly support small perturbation $F'$ of $F$, we have $|F|<|F|'$; the dihedral angle can be measured with the metric $g$.
	
	The crucial and elegant observation from Gromov is that, if such a cube exists, then by ``doubling'' $M$ three times across the front, the right and the bottom faces, the new cube $\tilde{M}$ has isometric opposite faces. Then we identify the opposite faces of $\tilde{M}$ and obtain a torus $T^3$ with a singular metric $\tilde{g}$. Due to the geometric assumptions, the metric $\tilde{g}$ has positive scalar curvature away from a stratified singular set $S=F^2\cup L^1\cup V^0$, where:
	\begin{enumerate}
		\item $\tilde{g}$ is smooth on both sides from $F^2$. The mean curvatures of $F^2$ from two sides satisfy a positive jump;
		\item $\tilde{g}$ is an edge metric along $L$ with angle less than $2\pi$;
		\item $\tilde{g}$ is bounded measurable across isolated vertices $V^0$.
	\end{enumerate}
	
	It is known that condition (1) above implies that $\tilde{g}$ has positive scalar curvature on $F^2$ in a weak sense: a Yamabe nonpositive manifold cannot support any metric which is singular along a hypersurface satisfying the ``positive jump of mean curvature'' assumption, and has positive scalar curvature on its regular part \cite{Miao02positive}\cite{ShiTam16scalar}. The affect of condition (2) and (3) above on the Yamabe type of a manifold was investigated by C. Mantoulidis and the author in a recent paper \cite{LiMantoulidis2017}. We proved that in dimension $3$, skeleton singularities with cone angle less than $2\pi$ do not effect the Yamabe type. We refer the readers to these papers and the references therein for more details.
	
	This idea of Gromov relies on the fact that cubes are the fundamental domains of the $\ZZ^3$ actions on $\RR^3$, hence is not applicable to general polyhedra. An interesting question is then: which types of polyhedra share properties like those observed by Gromov for cube-type polyhedra in manifolds with nonnegative scalar curvature? 
	
	Another related question concerns the rigidity: what types of polyhedra are ``mean convexly extremetal''? Surprisingly, this question is unsettled even in the Euclidean spaces:
	
	\begin{conj}[Dihedral rigidity conjecture, section 2.2 of \cite{Gromov14dirac}]\label{dihedral.rigidity.conjecture}
		Let $P\in \RR^n$ be a convex polyhedron with faces $F_i$. Let $P'\subset \RR^n$ be a $P$-type polyhedron with faces $F_i'$. If
		\begin{enumerate}
			\item each $F_i'$ is mean convex, and
			\item the dihedral angles satisfy $\measuredangle_{ij}'(P')\le \measuredangle_{ij}(P)$,
		\end{enumerate}
		then $P'$ is flat.
	\end{conj}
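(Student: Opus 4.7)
The plan is to tackle the conjecture in dimension three (the setting of this paper) via capillary minimal surfaces with contact angles prescribed by the Euclidean model $P$. Fix a distinguished face $F_0$ of $P$, and for each face $F_i$ of $P$ sharing an edge with $F_0$, set $\gamma_i=\measuredangle_{0i}(P)$. I then seek a minimal surface $\Sigma\subset P'$ modeled on $F_0$, with $\partial\Sigma=\bigcup_i\partial_i\Sigma$, $\partial_i\Sigma\subset F_i'$, and constant capillary contact angle $\gamma_i$ between $\Sigma$ and $F_i'$ along $\partial_i\Sigma$.

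For existence I would minimize the capillary functional
\[
\mathcal{A}(\Sigma)=\mathcal{H}^2(\Sigma)-\sum_i\cos\gamma_i\,\mathcal{H}^2(T_i),
\]
with $T_i\subset F_i'$ the wetted region, over surfaces $\Sigma$ sweeping out a fixed region of $P'$. Mean convexity of each $F_i'$ prevents $\Sigma$ from becoming tangent to a face in its interior, and combined with the angle hypothesis $\measuredangle_{ij}'(P')\le\measuredangle_{ij}(P)$ it supplies the barriers at the edges of $P'$ needed to keep $\Sigma$ away from the lower-dimensional singular strata of $\partial P'$. Corner regularity, in the spirit of J.\,Taylor's theory of almost-minimizing currents with free boundary, yields a piecewise-smooth $\Sigma$ with well-defined corner angles $\alpha_p$ at each point $p$ where $\partial\Sigma$ meets an edge of $P'$; the dihedral-angle hypothesis, via spherical trigonometry at the corner, translates to the bound $\alpha_p\le\beta_p$, where $\beta_p$ is the interior angle of the Euclidean polygon $F_0$ at the corresponding vertex.

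The rigidity now comes from the second variation of $\mathcal{A}$ applied with the test function $\phi\equiv 1$. Substituting the Gauss equation $\Ric_M(\nu,\nu)=\tfrac12R_M-K_\Sigma-\tfrac12|\secondfund_\Sigma|^2$ (valid since $H_\Sigma\equiv 0$) and applying Gauss--Bonnet to $\Sigma$, the stability inequality rearranges, after a standard capillary identity replacing the geodesic curvature of $\partial_i\Sigma$ by $H_{F_i'}/\sin\gamma_i$, into
\[
\tfrac12\!\int_\Sigma\!\bigl(|\secondfund_\Sigma|^2+R_M\bigr)+\sum_i\int_{\partial_i\Sigma}\!\frac{H_{F_i'}}{\sin\gamma_i}+\sum_p(\beta_p-\alpha_p)\ \le\ 2\pi\chi(\Sigma)-\sum_p(\pi-\beta_p).
\]
For $F_0$ a planar polygon topologically a disk, the exterior-angle identity $\sum_p(\pi-\beta_p)=2\pi=2\pi\chi(F_0)$ makes the right-hand side vanish. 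Since every term on the left is nonnegative ($R_M=0$ in the Euclidean setting of the conjecture, $H_{F_i'}\ge 0$ by mean convexity, $\alpha_p\le\beta_p$ by the dihedral hypothesis), each vanishes: $\secondfund_\Sigma\equiv 0$, $H_{F_i'}\equiv 0$ along $\partial\Sigma$, and $\alpha_p\equiv\beta_p$ at every corner.

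To upgrade this infinitesimal rigidity along one $\Sigma$ into global flatness of $P'$, I would vary the construction, for instance by parallel displacement of $F_0$ to produce a one-parameter family of capillary slabs; the resulting totally geodesic foliation of $P'$ with matching corner data forces $P'$ to be isometric to the flat model $P$. The main technical obstacle is the capillary existence and corner regularity step at the edges of $P'$, where $\partial\Sigma$ crosses the Lipschitz singular set of $\partial P'$: one needs sharp tangent-cone control to rule out pathologies (cusps, branch points, collapse into a lower stratum) and to legitimize the corner contributions in Gauss--Bonnet. A secondary obstacle, presumably responsible for the paper's restriction to a ``large collection'' rather than all convex polyhedra, is combinatorial: the exterior-angle identity above is clean only when $F_0$ is a disk and the capillary slabs admit a global extension, which may fail for more complicated $P$.
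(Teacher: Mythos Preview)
Your approach coincides with the paper's: minimize the capillary functional $\cF$, apply the second variation with $\phi\equiv 1$, combine with the Gauss equation and Gauss--Bonnet to obtain exactly the inequality you display (the paper's identity $\secondfund(\overline{\nu},\overline{\nu})+\cos\gamma\,A(\nu,\nu)+\sin\gamma\,k_g=\overline{H}$ is your ``standard capillary identity''), and trace equality to get infinitesimal rigidity of $\Sigma$. Existence and corner regularity go through Taylor, Simon, and Lieberman precisely as you anticipate, and in the paper are what force the a priori angle hypothesis $|\pi-(\gamma_j+\gamma_{j+1})|<\measuredangle(F_j',F_{j+1}')$ along each vertical edge.

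The rigidity step, however, is where your sketch has real gaps. ``Parallel displacement of $F_0$'' is not available in a curved ambient, and even in $\RR^n$ it does not automatically produce \emph{capillary} slabs. The paper instead constructs, via the implicit function theorem on cornered domains, a CMC capillary foliation $\{\Sigma_\rho\}$ around $\Sigma$ and derives the differential inequality $H'(\rho)\ge C(\rho)H(\rho)$ with $C(\rho)=\sum_j(\cot\gamma_j)\,\cH^1(\partial\Sigma_\rho\cap F_j')$. For $C(\rho)\ge 0$ one needs the additional hypothesis $\gamma_j\le\pi/2$ for all $j$ (or all $\ge\pi/2$), which your outline omits; without it the ODE argument forcing $H\equiv 0$ on the foliation collapses. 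There is a second obstruction specific to cone-type $P$: the minimizer of $\cF$ may be the empty set ($\cI=0$), so no infinitesimally rigid $\Sigma$ is handed to you. The paper handles this by building a CMC capillary foliation emanating from the cone vertex with $H(\rho)\to 0$ as $\rho\to 0$, then running the same differential-inequality argument from the vertex outward. Neither of these issues is visible in ``parallel displacement,'' and they are precisely what restricts the paper to cone and prism types with the angle conditions \eqref{angle.assumption} and \eqref{extra.angle.assumption} rather than arbitrary convex $P$.
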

	
	The primary scope of this paper is to answer affirmatively this conjecture for a large collection of polyhedral types in three-manifolds with nonnegative scalar curvature. We also obtain a comparison theorem for Riemannian polyhedra. 
	
	Let us define two general polyhedron types.
	\begin{defi}\label{definition.cone.prism}
		\begin{enumerate}
			\item Let $k\ge 3$ be an integer. In $\RR^3$, let $B\subset\{x^3=0\}$ be a convex $k$-polygon, and $p\in \{x_3=1\}$ be a point. Call the set
			\[\{tp+(1-t)x:t\in [0,1],x\in B\}\]
			a $(B,p)$-cone. Call $B$ the base face and all the other faces side faces.
			\item Let $k\ge 3$ be an integer. In $\RR^3$, let $B_1\subset \{x^3=0\}, B_2\subset\{x_3=1\}$ be two \underline{similar} convex $k$-polygons whose corresponding edges are parallel (i.e. the polygons are congruent up to scaling but not rotation). Call the set
			\[\{tp+(1-t)q:t\in [0,1], p\in B_1,q\in B_2\}\]
			a $(B_1,B_2)$-prism. Call $B_1,B_2$ the base faces and all the other faces side faces.
		\end{enumerate}
		If $(M,g)$ is a Riemannian polyhedron of $P$-type, where $P$ is a $(B,p)$-cone (or a $(B_1,B_2)$-prism), we call $(M,g)$ is of cone type (prism type, respectively).
	\end{defi}
	
	\begin{figure}[htbp]
		\centering
		\includegraphics{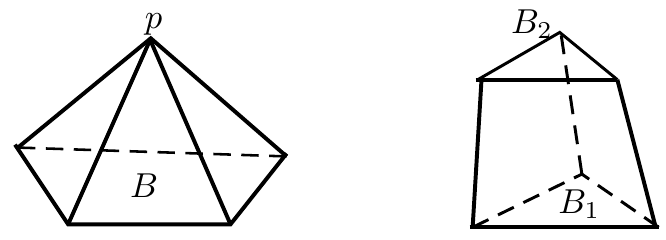}
		\caption{A $(B,p)$-cone and a $(B_1,B_2)$-prism.}
		\label{pic.cone.and.prism}
	\end{figure}
	
	The major objects we consider are Riemannian polyhedra $(M^3,g)$ of cone type or prism type, as in Definition \ref{definition.cone.prism}. Let us fix some notations that will be used throughout the paper. We use $F_1,\cdots,F_k$ to denote the side faces of $M$; if $M$ is of cone type, we use $p$ to denote the cone vertex, and $B$ to denote its base face; if $M$ is of prism type, we use $B_1,B_2$ to denote its two bases. Let $F=\cup_{j=1}^k F_j$ be the union of all side faces. Our first theorem makes a comparison between Riemannian polyhedra with nonnegative scalar curvature and their Euclidean models:
	
	\begin{theo}\label{theorem.comparison.nonrigid}
		Let $(M^3,g)$ be a Riemannian polyhedron of $P$-type with side faces $F_1,\cdots,F_k$, where $P\subset \RR^3$ is a cone or prism with side faces $F_1',\cdots,F_k'$. Denote $\gamma_j$ the angle between $F_j'$ and the base face of $P$ (if $P$ is a prism, fix one base face). Assume that everywhere along $F_j\cap F_{j+1}$,
		\begin{equation}\label{angle.assumption}
		|\pi-(\gamma_j+\gamma_{j+1})|<\measuredangle(F_j,F_{j+1}).
		\end{equation}
		Then the strict comparison statement holds for $(M,g)$. Namely, if $R(g)\ge 0$, and each $F_j$ is mean convex, then the dihedral angles of $M$ cannot be everywhere less than those of $P$.
	\end{theo}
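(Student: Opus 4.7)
The plan is to argue by contradiction through a capillary minimal surface construction. Suppose $(M,g)$ has $R(g)\ge 0$, every $F_j$ is mean convex, and $\measuredangle(F_j,F_{j+1})(M) < \measuredangle(F_j,F_{j+1})(P)$ everywhere on each edge. In the Euclidean polyhedron $P$, each horizontal slice $\Sigma_c = P\cap\{x^3=c\}$ is a flat polygon meeting every side face $F_j'$ at the angle $\gamma_j$, and is a critical point of the capillary functional
\[
\mathcal{F}(\Omega) = \mathcal{H}^2(\partial^* \Omega \cap \inte M) - \sum_{j=1}^k \cos(\gamma_j)\, \mathcal{H}^2(\partial^* \Omega \cap F_j),
\]
over Caccioppoli sets $\Omega \subset M$ separating the base $B$ (resp.\ $B_1$) from the vertex $p$ (resp.\ from the opposite base $B_2$). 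I will produce a minimizer $\Omega_{\min}$ of $\mathcal{F}$ in $(M,g)$ by the direct method and set $\Sigma := \partial^*\Omega_{\min}\cap\inte M$; this is a minimal surface meeting each $F_j$ at angle $\gamma_j$, whose boundary has corner points $v_1,\dots,v_N$ on edges of $M$.

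Non-degeneracy of $\Sigma$ requires three separate inputs: mean convexity of $F_j$ provides the barrier preventing $\Sigma$ from absorbing into a side face; the cone/prism geometry supplies barriers preventing escape to $p$ (resp.\ $B_2$); and the hypothesis \eqref{angle.assumption} is precisely the inequality ensuring that a surface collapsing onto a codimension-2 edge has strictly larger $\mathcal{F}$-energy than a smooth nearby competitor. The latter uses that $|\pi-(\gamma_j+\gamma_{j+1})|<\measuredangle(F_j,F_{j+1})$ is exactly the spherical triangle inequality guaranteeing that a non-degenerate capillary configuration at a corner exists. Capillary regularity theory (reducible by reflection across each $F_j$ to classical minimal surface regularity across a singular interface) then yields smoothness of $\Sigma$ away from the corners, smoothness of each arc $\partial_j\Sigma$, and a well-defined interior corner angle $\alpha_{v_i}$ at each $v_i$ determined from $\gamma_j$, $\gamma_{j+1}$, and $\measuredangle(F_j,F_{j+1})(v_i)$ by spherical trigonometry. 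A standard surgery/replacement argument identifies the topological type of $\Sigma$: a disk in the cone case and an annulus in the prism case, matching $\Sigma_c$ in the Euclidean model.

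Plugging $\phi\equiv 1$ into the capillary second variation of $\mathcal{F}$ at the minimizer yields
\[
\int_\Sigma (|A|^2+\Ric_g(\nu,\nu))\,dA + \sum_{j=1}^k \int_{\partial_j\Sigma} q_j\,ds \le 0,
\]
with $q_j$ the capillary boundary coefficient built from $\gamma_j$, the second fundamental form of $F_j$, and $\kappa_g^\Sigma$. The twice-contracted Gauss equation (with $H=0$) combined with $R\ge 0$ gives $|A|^2+\Ric_g(\nu,\nu)\ge -K_\Sigma$, and mean convexity $H_{F_j}\ge 0$ entering $q_j$ bounds $q_j\ge -\kappa_g^\Sigma$ pointwise along each boundary arc. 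The resulting inequality $\int_\Sigma K_\Sigma\,dA + \sum_j\int_{\partial_j\Sigma}\kappa_g^\Sigma\,ds\ge 0$, combined with Gauss-Bonnet for the piecewise smooth $\Sigma$ with corners, simplifies to
\[
\sum_{i=1}^N (\pi-\alpha_{v_i}) \le 2\pi\chi(\Sigma).
\]

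The corner angle $\alpha_{v_i}$ is a strictly increasing function of $\measuredangle(F_j,F_{j+1})(v_i)$ (with the other two dihedral angles $\gamma_j$, $\gamma_{j+1}$ held fixed), and the Euclidean slice realizes $\sum_i (\pi-\alpha_{v_i}^{\mathrm{Eucl}}) = 2\pi\chi(\Sigma)$ with equality. The strict hypothesis $\measuredangle(F_j,F_{j+1})(M) < \measuredangle(F_j,F_{j+1})(P)$ everywhere therefore forces $\sum_i(\pi-\alpha_{v_i}) > 2\pi\chi(\Sigma)$, contradicting the stability inequality above. The main obstacle is the analysis at the edges of $\partial M$: ruling out edge-degeneration uses \eqref{angle.assumption} in a crucial, quantitative way, and developing capillary regularity at these singular points of $\partial M$ (where the boundary itself is not $C^1$) is non-standard. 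Controlling the topology of the minimizer so that Gauss-Bonnet delivers the correct $2\pi\chi(\Sigma)$ is the other main technical ingredient that must be carried out with care.
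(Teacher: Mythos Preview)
Your proposal follows essentially the same route as the paper: produce a capillary minimizer for the functional $\cF$, plug $f\equiv 1$ into the second variation, rewrite the interior term via the Gauss equation, use the boundary identity $\secondfund(\overline\nu,\overline\nu)+\cos\gamma\,A(\nu,\nu)+\sin\gamma\,k_g=\overline H$ together with $\overline H\ge 0$, and close with Gauss--Bonnet and the spherical--trigonometric monotonicity of the corner angles $\alpha_j$ in the dihedral angle.

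Two slips worth correcting. First, in the prism case the horizontal slice $\Sigma_c=P\cap\{x^3=c\}$ is still a convex $k$-gon, hence a disk, not an annulus; in any event the argument only needs $\chi(\Sigma)\le 1$, and the Euclidean identity you invoke is $\sum_j(\pi-\alpha_j')=2\pi$, independent of $\chi(\Sigma)$. Second, your description of the barriers is off. Mean convexity of the side faces $F_j$ is \emph{not} a barrier (the surface is supposed to meet $F_j$ at angle $\gamma_j$, not avoid it); it enters only in the final inequality through $\overline H\ge 0$. The genuine obstacle issue is at the \emph{base} face(s), handled in the paper by the Solomon--White interior maximum principle and a boundary maximum principle along the edges $F_j\cap B$ (using that the dihedral angle there is $<\gamma_j$). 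In the cone case there is an additional, separate step you do not mention: one must rule out the degenerate minimizer $E=\emptyset$, which the paper does by showing directly (Lemma~\ref{lemma.I.is.negative}) that the strict dihedral hypothesis forces $\cI<0$; this is not a barrier argument at all.
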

	
	Our theorem should be contextualized in the rich history of the study of comparison theorems in differential geometry. In fact, it is not hard to argue as in \cite{Gromov14dirac} that the converse is also true: on a three-manifold with negative scalar curvature, one may construct a polyhedron which entirely invalidates the conclusions of Theorem \ref{theorem.comparison.nonrigid}. Thus the metric properties introduced by Theorem \ref{theorem.comparison.nonrigid} characterize $R(g)\ge 0$ faithfully, and may very well serve as a definition of $R(g)\ge 0$ for a metric $g$ that is only continuous.
	
	A more refined analysis enables us to characterize the rigidity behavior for Theorem \ref{theorem.comparison.nonrigid}, thus answering Conjecture \ref{dihedral.rigidity.conjecture} for cone type and prism type polyhedra, with the very mild a priori angle assumptions \eqref{angle.assumption}. In fact, we obtain:
	
	\begin{theo}\label{theorem.rigidity}
		Under the same assumptions of Theorem \ref{theorem.comparison.nonrigid} and the extra assumption that
		\begin{equation}\label{extra.angle.assumption}
		\gamma_j\le \pi/2, j=1,2,\cdots,k,\quad \text{or}\quad \gamma_j\ge \pi/2, j=1,2,\cdots,k,
		\end{equation}
		we have the rigidity statement. Namely, if $R(g)\ge 0$, each $F_j$ is mean convex, and $\measuredangle_{ij}(M,g)\le \measuredangle_{ij}(P,g_{Euclid})$, then $(M,g)$ is isometric to a flat polyhedron in $\RR^3$.
	\end{theo}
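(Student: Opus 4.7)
The strategy is to extract rigidity as a borderline case of Theorem \ref{theorem.comparison.nonrigid} by locating a stable capillary minimal surface whose existence is forced by the equality in the dihedral angle comparison. Assume all hypotheses of Theorem \ref{theorem.rigidity}, and (without loss of generality) that $\gamma_j \le \pi/2$ for every $j$. In the Euclidean model $P$, a horizontal cross-section parallel to the base meets each side face $F_j'$ precisely at angle $\gamma_j$. I would produce a Riemannian analogue inside $M$: a disk-type capillary minimal surface $\Sigma$ whose boundary lies on $F=\cup_j F_j$ and meets each $F_j$ at the constant contact angle $\gamma_j$, obtained by minimizing area in the appropriate relative class, with the strict comparison of Theorem \ref{theorem.comparison.nonrigid} ruling out degeneration onto a base face. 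The one-sided assumption \eqref{extra.angle.assumption} ensures that this capillary functional is coercive and that minimizers cannot peel off along the edges $L_{ij}=F_i\cap F_j$.

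On such a $\Sigma$, I would run a Gauss--Bonnet-plus-stability calculation. Combining the Gauss equation for a minimal surface in an ambient 3-manifold, the capillary stability inequality tested against the constant function $1$, and Gauss--Bonnet for the polygonal surface $\Sigma$, together with the elementary spherical-trigonometric identity expressing the interior angle of $\Sigma$ at each corner in terms of the ambient dihedral angle $\measuredangle_{ij}(M,g)$ and the contact angles $\gamma_i,\gamma_j$, one obtains a schematic identity of the form
\begin{equation*}
	0 \;\ge\; \int_{\Sigma}\bigl(\tfrac{1}{2}R+|A^{\Sigma}|^2\bigr)\, dA \;+\; \int_{\partial\Sigma} H^{F_j}\cot\gamma_j\, ds \;+\; \sum_{\text{corners}}\bigl(\measuredangle_{ij}(P)-\measuredangle_{ij}(M,g)\bigr),
\end{equation*}
in which every summand is nonnegative by $R\ge 0$, mean convexity of the $F_j$, the one-sided condition $\gamma_j\le\pi/2$, and the dihedral angle hypothesis. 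Equality forces $\Sigma$ to be totally geodesic with $R\equiv 0$ and $\Ric(\nu,\nu)\equiv 0$ along it, each $F_j$ to be minimal along $\partial\Sigma$, and every dihedral angle along $\partial\Sigma$ to coincide with its Euclidean counterpart.

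To propagate this pointwise rigidity globally, I would apply an implicit function theorem for capillary minimal surfaces with prescribed contact angles $\gamma_j$ to extend $\Sigma=\Sigma_0$ to a smooth one-parameter family $\{\Sigma_t\}$. By the same Gauss--Bonnet computation applied leaf-by-leaf, each $\Sigma_t$ is itself totally geodesic with parallel unit normal, so the family foliates a neighborhood of $\Sigma$ and yields a local isometric splitting $\Sigma\times(-\ep,\ep)\subset M$ with a flat product metric in which each side face is a rectangle. A standard open--closed argument then extends the foliation across all of $M$ until one meets a base face (prism case) or collapses onto the cone vertex $p$ (cone case), producing a global isometric embedding of $(M,g)$ into $\RR^3$ as a polyhedron of $P$-type; the saturated angle equalities then identify it with a rescaling of $P$. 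I expect the principal obstacle to be the capillary deformation theory along the edges $L_{ij}$ and, in the cone-type setting, near the vertex $p$ where the leaves $\Sigma_t$ collapse: the linearized capillary operator carries mixed corner boundary conditions along each $L_{ij}$ and degenerates at $p$, so the implicit function argument must be set up in weighted function spaces adapted to the stratified boundary. The one-sided angle hypothesis \eqref{extra.angle.assumption} is used precisely to make the Euclidean model foliable by horizontal slices with uniform contact angles $\gamma_j$, so that the propagated leaves in $M$ retain the correct polygonal type throughout the argument.
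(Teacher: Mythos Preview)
Your outline follows the paper's broad architecture (minimize the capillary functional, extract infinitesimal rigidity from stability plus Gauss--Bonnet, then foliate), but the deformation step as you phrase it has a genuine gap. You propose to apply an implicit function theorem to produce a one-parameter family of \emph{minimal} capillary surfaces $\{\Sigma_t\}$ and then rerun the stability computation on each leaf. This fails on two counts. First, the Jacobi operator of an infinitesimally rigid $\Sigma$ has the constant function $1$ in its kernel (that is precisely what $Q(1,1)=0$ says), so the linearization is not invertible and you cannot directly produce nearby minimal capillary surfaces. The paper instead solves for \emph{constant mean curvature} capillary leaves, treating $H(t)$ as a Lagrange multiplier, which restores invertibility on the mean-zero subspace. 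Second, even granting a family $\{\Sigma_t\}$, you cannot simply ``apply the same Gauss--Bonnet computation leaf-by-leaf'': nearby leaves are not known to be stable or minimizing, so the second variation inequality is unavailable. The paper replaces this by a differential inequality $H'(\rho)\ge C(\rho)H(\rho)$ with $C(\rho)=\sum_j(\cot\gamma_j)\cH^1(\partial\Sigma_\rho\cap F_j)\ge 0$, and then uses that $\Sigma_0$ minimizes $\cF$ to force $H\equiv 0$ on the whole foliation.

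Two smaller points. Your schematic boundary term $\overline{H}\cot\gamma_j$ is off: the claim in Section~3 gives $\frac{1}{\sin\gamma_j}\overline{H}$, which is nonnegative without any one-sided hypothesis on $\gamma_j$. The condition \eqref{extra.angle.assumption} is \emph{not} used in the stability estimate on $\Sigma$; it enters only to make $C(\rho)\ge 0$ in the differential inequality above. Finally, in the cone case when the infimum $\cI=0$ the minimizer may be empty, and the paper handles this by a separate construction (Theorem~\ref{theorem.foliation.near.a.vertex}) of a CMC capillary foliation emanating from the vertex with $H(\rho)\to 0$; your remark about weighted spaces points in the right direction but does not substitute for this argument.
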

	
	The angle assumption \eqref{angle.assumption} may be regarded as a mild regularity assumption on $(M,g)$. It is satisfied, for instance, by any small $C^0$ perturbation of the Euclidean polyhedron $P$. Moreover, assumption \eqref{angle.assumption} is vacuous, if all the angles $\gamma_j$ are $\pi/2$. In this case, the Euclidean model is a prism with orthogonal base and side faces, and we are able to obtain the prism inequality in section 5.4, \cite{Gromov14dirac} as a special case.
	
	Motivated by the Schoen-Yau dimension reduction argument \cite{SchoenYau79structure}, we have also been able to generalize Theorem \ref{theorem.comparison.nonrigid} and Theorem \ref{theorem.rigidity} in higher dimensions. They will appear in a forthcoming paper.
	
	Now we indicate the strategy of the proof for Theorem \ref{theorem.comparison.nonrigid} and Theorem \ref{theorem.rigidity} and the organization of the paper. Consider the following energy functional:
	\begin{equation}\label{energy.functional}
	\cF(E)=\cH^2(\partial E\cap \mathring{M})-\sum_{j=1}^k(\cos\gamma_j)\cH^2(\partial E\cap F_j),
	\end{equation}
	and the variational problem
	\begin{equation}\label{capillary.variational.problem}
	\cI=\inf\{\cF(E): E\in \sE\},
	\end{equation}
	here $\sE$ is the collection of contractible open subset $E'$ such that: if $M$ is of cone type, then $p\in E'$ and $E'\cap B=\emptyset$; if $M$ is of prism type, then $B_2\subset E'$ and $E'\cap B_1=\emptyset$. If the solution to \eqref{capillary.variational.problem} is regular, its boundary $\Sigma^2=\partial E\cap \mathring{M}$ is called a capillary minimal surface. That is, $\Sigma$ is a minimal surface that contacts each side face $F_j$ at constant angle $\gamma_j$. The existence, regularity and geometric properties of capillary surfaces have attracted a wealth of research throughout the rich history of geometric variational problems. We refer the readers to the book of Finn \cite{Finn1986equilibrium} for a beautiful and thorough introduction.
	
	Our first observation is that $\cI$ is always finite: since $M$ is compact, we deduce that
	\[\cI\ge -\sum_{j=1}^k(\cos\gamma_j)\cH^2(F_j)>-\infty.\]
	Thus a minimizing sequence exists. The existence and boundary regularity of the solution to \eqref{capillary.variational.problem} was treated by Taylor \cite{Taylor77boundaryregularity} (see page 328-(6); see also the discussion for more general anisotropic capillary problems by De Philippis-Maggi \cite{DePhilippisMaggi15regularity}). Using the language of integral currents, Taylor proved the existence of the minimizer $\Sigma$, and that $\Sigma$ is $C^\infty$ regular up to its boundary, where $\partial M$ is smooth. However, the variational problem \eqref{capillary.variational.problem} has obstacles: the base face(s) of $M$. To overcome this difficulty, we apply the interior varifold maximum principle \cite{SolomonWhite89maximum} and a new boundary maximum principle, and reduce \eqref{capillary.variational.problem} to a variational problem without obstacles. We then adapt ideas from Simon \cite{Simon80regularity} and Lieberman \cite{Lieberman88Holder}, and obtain a $C^{1,\alpha}$ regularity property of $\Sigma$ at its corners. This is the only place we need to use the angle assumption \eqref{angle.assumption}. The existence and regularity of $\Sigma$ is established in section 2. In section 3, we unveil the connection between interior scalar curvature, the boundary mean curvature and the dihedral angle captured by the variational problem \eqref{capillary.variational.problem}, and derive various geometric consequences with $\Sigma$. We prove Theorem \ref{theorem.comparison.nonrigid} with the second variational inequality and the Gauss-Bonnet formula. We then proceed to section 4 for the proof of Theorem \ref{theorem.rigidity}, where an analysis for the ``infinitesimally rigid'' minimal capillary surface $\Sigma$ is carried out, with the idea pioneered by Bray-Brendle-Neves \cite{BrayBrendleNeves10rigidity}. The new challenge here is to deal with the case when $\cI=0$. We develop a new general existence result of constant mean curvature capillary foliations near the vertex $p$, and establish the dynamical behavior of such foliations in nonnegative scalar curvature.

	\noindent \textbf{Acknowledgement:} The author wishes to thank Rick Schoen, Brian White, Leon Simon, Rafe Mazzeo, Or Hershkovits and Christos Mantoulidis for stimulating conversations. He also wishes to thanks the referee for greatly improving the exposition. Part of this work was carried out when the author was visiting the University of California, Irvine. He wants to thank Department of Mathematics, UCI, for their hospitality.

	\section{Existence and regularity}
	We discuss the existence and regularity of the minimizer for the variational problem \eqref{capillary.variational.problem}. The goal of this section is:
	\begin{theo}\label{theorem.existence.regularity}
		Consider the variational problem \eqref{capillary.variational.problem} in a Riemannian polyhedron $(M^3,g)$ of cone or prism type. Assume $\cI<0$ if $M$ is of cone type. Then $\cI$ is achieved by an open subset $E$. Moreover, $\Sigma=E\cap \mathring{M}$ is an area minimizing surface, $C^{1,\alpha}$ to its corners for some $\alpha>0$, and meets $F_j$ at constant angle $\gamma_j$.
	\end{theo}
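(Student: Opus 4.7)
The plan is to apply the direct method of the calculus of variations in the framework of Caccioppoli sets, remove the obstacle imposed by the base faces via maximum principles, invoke Taylor's boundary regularity theorem away from the edges, and then establish corner regularity by a Simon--Lieberman type argument. First, note that $\cF$ extends lower semicontinuously to sets of finite perimeter and is bounded below by $-\sum_j(\cos\gamma_j)\cH^2(F_j)$, so taking a minimizing sequence $\{E_n\}\subset \sE$ and invoking BV compactness produces an $L^1$-subsequential limit $E$ of finite perimeter with $\cF(E)=\cI$. The topological constraints defining $\sE$ pass to the limit provided we exclude degenerate minimizers: this is the precise role of the hypothesis $\cI<0$ in the cone case, which rules out $E=\emptyset$ or $E$ collapsing to $\{p\}$.

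Next, I would show that $\Sigma=\partial E\cap \mathring{M}$ does not touch any base face of $M$, so that these faces are genuinely obstacles that the minimizer does not saturate. This uses the interior varifold maximum principle of Solomon--White to push $\Sigma$ off the relative interior of each base, combined with a new boundary maximum principle near the edges where a base meets a side face $F_j$; the conclusion is $\Sigma\cap \overline{B}=\emptyset$ in the cone case, and $\Sigma$ disjoint from $\overline{B_1}\cup \overline{B_2}$ in the prism case. Once the obstacles are cleared, the minimization becomes a free boundary capillary problem whose boundary lies only on the side faces, and Taylor's regularity theorem for minimizing anisotropic currents applies to give $\Sigma\in C^\infty$ up to the smooth portion of $\partial M$. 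The first variation in directions tangent to $F_j$ then forces the prescribed contact angle $\cos\measuredangle(\nu_\Sigma,\nu_{F_j})=\cos\gamma_j$.

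The main obstacle, and where essentially all the new input is needed, is $C^{1,\alpha}$ regularity at the corners of $\Sigma$, i.e.\ along the curve where $\Sigma$ meets an edge $F_j\cap F_{j+1}$. Here the hypothesis \eqref{angle.assumption} is decisive: it is exactly the strict inequality needed to guarantee that the blow-up problem at a corner has a unique, transversal flat tangent cone, namely a quarter-plane meeting the edge at the two prescribed angles $\gamma_j,\gamma_{j+1}$. Concretely, I would blow up $\Sigma$ at a corner point and use \eqref{angle.assumption} to rule out tangent cones that are tangent to the edge, spiral around it, or bifurcate; then, after straightening the geometry by a local diffeomorphism that flattens the two side faces, one is left with a quasilinear oblique-derivative problem on a planar wedge. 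Simon's boundary regularity technique for minimal graphs, combined with Lieberman's Hölder theory for oblique-derivative problems in corner domains, then upgrades convergence to the tangent cone to $C^{1,\alpha}$ decay. Steps one through three rest on well-developed machinery; the core new work, and the only place the geometric hypothesis \eqref{angle.assumption} is used, is this final corner analysis.
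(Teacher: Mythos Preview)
Your proposal is correct and follows essentially the same approach as the paper: direct method for existence, Solomon--White interior maximum principle together with a new boundary maximum principle to remove the base-face obstacles, Taylor's theory for regularity on the smooth part of $\partial M$, and a Simon--Lieberman corner analysis where the angle hypothesis \eqref{angle.assumption} is used to force a unique planar tangent cone. The paper fills in the corner step with somewhat more detail than your outline---a lower density bound from a first-variation argument, a monotonicity formula for the combined varifold $\|T\|-\sum_j(\cos\gamma_j)W_j$, an explicit competitor construction to exclude half-planes through the edge (this is where the upper bound $\theta<\pi-|\gamma_1-\gamma_2|$ enters), and a pointwise curvature estimate $|A_\Sigma|(x)\cdot\dist(x,q)\to 0$ proved by blow-up---but your sketch correctly identifies all of these ingredients and their roles.
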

	
	We first introduce some notations and basic geometric facts on capillary surfaces. Then we reduce the obstacle problem \eqref{capillary.variational.problem} equivalently to a variational problem without any obstacle. This is done via a varifold maximum principle. Hence the regularity theory developed in \cite{Taylor77boundaryregularity} is applicable, and we get regularity in $\mathring{\Sigma}$,  and in $\partial \Sigma$ in $\mathring{F_j}$. The regularity at the corners of $\Sigma$ is then studied with an idea of Simon \cite{Simon80regularity}. At the corner, we prove that the surface is graphical over its planar tangent cone. Then we invoke the result of Lieberman \cite{Lieberman88Holder}, which showed that the unit normal vector field is H\"older continuous up to the corners.
	
	\subsection{Preliminaries}
	We start by discussing some geometric properties of capillary surfaces. In particular, we deduce the first and second variation formulas for the energy functional \eqref{energy.functional}. Let us fix some notation.
	
	Let $P$ be an orientable Riemannian manifold of dimension $p$ and $M$ a closed compact polyhedron of cone or prism types in $N$. Let $\Sigma^{n-1}$ be an orientable $n-1$ dimensional compact manifold with non-empty boundary $\partial \Sigma$ and $\partial \Sigma\subset \partial M$. We denote the topological interior of a set $U$ by $\mathring{U}$. Assume $\Sigma$ separates $\mathring{M}$ into two connected components. Fix one component and call it $E$. Denote $X$ the outward pointing unit normal vector field of $\partial M$ in $M$, $N$ the unit normal vector field of $\Sigma$ in $E$ pointing into $E$, $\nu$ the outward pointing unit normal vector field of $\partial \Sigma$ in $\Sigma$, $\overline{\nu}$ the unit normal vector field of $\partial\Sigma$ in $\partial M$ pointing outward $E$. Let $A$ denote the second fundamental form of $\Sigma\subset E$, $\secondfund$ denote the second fundamental form of $\partial M\subset M$. We take the convention that $A(X_1,X_2)=\bangle{\nabla_{X_1}X_2,N}$. Denote $H,\overline{H}$ the mean curvature of $\Sigma\subset E$, $\partial M\subset M$, respectively. Note that in our convention, the unit sphere in $\RR^3$ has mean curvature $2$.
	
	\begin{figure}[htbp]
		\centering
		\includegraphics{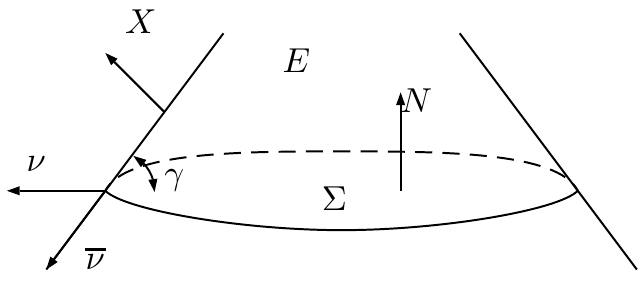}
		\caption{Capillary surfaces}
		\label{picture.capillary.notations}
	\end{figure}
	
	By an admissible deformation we mean a diffeomorphism $\Psi:(-\ep,\ep)\times \Sigma\rightarrow M$ such that $\Psi_t:\Sigma\rightarrow M$, $t\in (-\ep,\ep)$, defined by $\Psi_t(q)=\Psi(t,q)$, $q\in \Sigma$, is an embedding satisfying $\Psi_t(\Sigma)\subset \mathring{M}$ and $\Psi_t(\partial \Sigma)\subset \partial M$, and $\Psi_0(x)=x$ for all $x\in \Sigma$. Denote $\Sigma_t=\Psi_t(\Sigma)$. Let $E_t$ be the corresponding component separated by $\Sigma_t$. Denote $Y=\pa{\Psi (t,\cdot)}{t}\vert_{t=0}$ the vector field generating $\Psi$. Then $Y$ is tangential to $\partial M$ along $\partial\Sigma$. Fix the angles $\gamma_1,\cdots,\gamma_k\in (0,\pi)$ on the faces $F_1,\cdots,F_k$ of $M$. Consider the energy functional
	\[F(t)=\cH^{n-1}(\Sigma_t)-\sum_{j=1}^k(\cos \gamma_j)\cH^{n-1}(\partial E_t\cap F_j).\]
	
	We now deduce the first variation formula of $F(t)$. Let $f=\bangle{Y,N}$ be the normal component of the vector field $Y$. By the usual first variation formula on volume function and integration by parts,
	\[\td{}{t}\bigg\vert_{t=0}\cH^{n-1}(\Sigma_t)=\int_\Sigma \Div_\Sigma Y d\cH^{n-1}=-\int_\Sigma H fd\cH^{n-1}+\int_{\partial \Sigma} \bangle{Y,\nu} d\cH^{n-2}.\]
	
	On the other hand, for each $j$, $1\le j\le k$, 
	\[\td{}{t}\bigg\vert_{t=0} -(\cos\gamma_j) \cH^{n-1} (\partial E_t\cap F_j)=-\cos\gamma_j\int_{\partial \Sigma\cap F_j} \bangle{Y,\overline{\nu}} d\cH^{n-2}.\]
	
	Adding the above two equations, the first variation of $F(t)$ is given by
	\begin{equation}\label{first.variation.for.regular.surfaces}
	\td{}{t}\bigg\vert_{t=0}F(t)=-\int_\Sigma Hf d\cH^{n-1}+\sum_{j=1}^k\int_{\partial \Sigma\cap F_j}\bangle{Y,\nu-(\cos\gamma_j) \overline{\nu}}d\cH^{n-2},
	\end{equation}
	We note that \eqref{first.variation.for.regular.surfaces} holds more generally in the context of varifolds, see \eqref{capillary.varifold.first.variation}. Also, in the first variation \eqref{first.variation.for.regular.surfaces}, there is no contribution from the corners of $\Sigma$. The surface $\Sigma$ is said to be minimal capillary if $F'(t)=0$ for any admissible deformations. If follows from \eqref{first.variation.for.regular.surfaces} that $\Sigma$ is minimal capillary if and only if $H\equiv 0$ and $\nu-(\cos\gamma_j)\overline{\nu}$ is normal to $F_j$; that is, along $F_j$ the angle between the normal vectors $N$ and $X$, or equivalently, between $\nu$ and $\overline{\nu}$, is everywhere equal to $\gamma_j$.
	
	Assume $\Sigma$ is minimal capillary. We then have the second variational formula:
	\begin{multline}\label{second.variation.formula}
	\td{^2}{t^2}\bigg\vert_{t=0}F(0)=-\int_{\Sigma}(f\Delta f+(|A|^2+\Ric(N,N))f^2)d\cH^{n-1}\\
	+\sum_{j=1}^k\int_{\partial\Sigma\cap F_j}f\left(\pa{f}{\nu}-Qf \right)d\cH^{n-2},
	\end{multline}
	where on $\partial\Sigma \cap F_j$,
	\[Q=\frac{1}{\sin \gamma_j}\secondfund(\overline{\nu},\overline{\nu})+\cot\gamma_j A(\nu,\nu).\]
	Here $\Delta$ is the Laplace operator of the induced metric on $\Sigma$, and $\Ric$ is the Ricci curvature of $M$. For a proof of the second variation formula, we refer the readers to the appendix of \cite{RosSouam97capillarystability}.

	\subsection{Maximum principles}
	We first observe that \eqref{capillary.variational.problem} is a variational problem with \textit{obstacles}: $E\cap B_1=\emptyset$ if $M$ is of cone type, and $B_2\subset E$, $E\cap B_1=\emptyset$ if $M$ is of prism type. To apply the existence and the regularity theories of Taylor \cite{Taylor77boundaryregularity}, we first prove that it suffices to consider a variational problem \textit{without} any obstacles. Such reduction is usually achieved via varifold maximum principles, see e.g. \cite{SolomonWhite89maximum,white10maximumcodimension}\cite{LiZhou17freeboundarymaximum}. In our case, the maximum principles hinge upon the special structure of the obstacle: that $B$ (or $B_1,B_2$) is mean convex, and that the dihedral angles along $\partial F_j\cap B$ are nowhere larger than $\gamma_j$. In fact, if $\Sigma=\partial E\cap\mathring{M}$ is a $C^1$ surface with piecewise smooth boundary, then it is not hard to see from the first variational formula \eqref{first.variation.for.regular.surfaces} that
	\begin{itemize}
		\item $\Sigma$ and $B$ do not touch in the interior.
		\item $\partial \Sigma$ does not contain any point on $ F_j\cap B$ where the dihedral angle is strictly less than $\gamma_j$.
	\end{itemize}
	Thus $\Sigma$ is a minimal surface that meets each $F_j$ at constant angle $\gamma_j$.
	
	The interior maximum principle has been investigated in different scenarios \cite{Simon87strict}\cite{SolomonWhite89maximum}\cite{white10maximumcodimension}\cite{Wickramasekera14strongmaximum}. Notice that the energy minimizer of \eqref{capillary.variational.problem} is necessarily area minimizing in the interior. We apply the strong maximum principle by Solomon-White \cite{SolomonWhite89maximum} and conclude that the surface $\Sigma=\partial E\cap \mathring{M}$ cannot touch the base face $B$, unless lies entirely in $B$. 
	
	Here we develop a new boundary maximum principle. For the purpose of this paper, we only consider energy minimizing currents of codimension $1$ associated to \eqref{capillary.variational.problem}. However, we conjecture that a similar statement should hold for varifolds with boundary in general codimension. (See, for instance, the boundary maximum principle of Li-Zhou \cite{LiZhou17freeboundarymaximum}.)
	
	\begin{prop}\label{boundary.maximum.principle}
		Let $M$ be a polyhedron of cone type. Let $T\in\cD^2(M), E\in \cD^3(U)$ be rectifiable currents with $T=\partial E\cap \mathring{M}$ and $\spt(\partial T)\subset F$. Assume $E$ is an energy minimizer of \eqref{capillary.variational.problem}. Then $\spt(T)$ does not contain any point on the edge $F_j\cap B$ where the dihedral angle is less than $\gamma_j$.
	\end{prop}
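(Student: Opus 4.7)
The plan is to argue by contradiction. Suppose some $p\in\spt(T)\cap F_j\cap B$ satisfies $\theta_p:=\measuredangle(F_j,B)|_p<\gamma_j$. I choose normal coordinates at $p$ flattening $F_j$ and $B$, modeling $M$ locally on the Euclidean wedge $W=\{(r\cos\phi,r\sin\phi,z):r\ge 0,\,0\le\phi\le\theta_p\}$ with $F_j=\{\phi=0\}$, $B=\{\phi=\theta_p\}$, and edge $L=F_j\cap B$ the $z$-axis; the metric discrepancy $g-g_W$ on $B_r(p)$ is $O(r)$ in $C^1$, so will be subdominant. I then blow up $(T,E)$ at $p$ by factors $1/r_k$, $r_k\downarrow 0$: interior area monotonicity together with the crude bound $|\cos\gamma_j|\cH^2(\partial E\cap F_j\cap B_r(p))\le Cr^2$ produces a subsequential tangent cone $(T_\infty,E_\infty)$ minimizing the scale-invariant capillary functional on $W$ subject to $E_\infty\cap B=\emptyset$. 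Using $z$-translation invariance and the cone property, plus connectedness and contractibility of $E$ together with the obstacle, I reduce to $E_\infty=\{0\le\phi\le\alpha\}$, $T_\infty=P_\alpha:=\{\phi=\alpha\}$, for a single angle $\alpha\in(0,\theta_p]$; in particular $\alpha\le\theta_p<\gamma_j$.

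The contradiction comes from an explicit triangular excision. In the $\{z=0\}$ slice set
\[
O=(0,0),\qquad C=\Bigl(\delta\,\tfrac{\sin(\gamma_j-\alpha)}{\sin\gamma_j},\,0\Bigr)\in F_j,\qquad D=(\delta\cos\alpha,\delta\sin\alpha)\in P_\alpha,
\]
and let $V$ be the solid triangle $OCD$ extruded in $z$. By the law of sines on $OCD$, the segment $CD$ meets $F_j$ at $C$ at angle exactly $\gamma_j$; and since $\gamma_j>\alpha$, the point $C$ lies on the positive part of $F_j$ so that $V\subset E_\infty$. Setting $E_\infty':=E_\infty\setminus\overline V$, the per-unit-$z$ contributions to $\Delta\cF$ are
\begin{align*}
\Delta\cH^1(\partial E'\cap\mathring W) &= |CD|-|OD| = \delta\bigl(\sin\alpha/\sin\gamma_j-1\bigr),\\
\Delta\cH^1(\partial E'\cap F_j) &= -|OC| = -\delta\,\sin(\gamma_j-\alpha)/\sin\gamma_j,
\end{align*}
which combine via the identity $\sin\alpha+\cos\gamma_j\sin(\gamma_j-\alpha)=\sin\gamma_j\cos(\gamma_j-\alpha)$ into
\[
\Delta\cF=\delta\bigl(\cos(\gamma_j-\alpha)-1\bigr)<0,
\]
using $\alpha<\gamma_j$. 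Rescaling this competitor to physical scale $r_k$ yields a local modification $E_k'\subset M$ with energy gain $\Theta(r_k^2)$, dominating the $O(r_k^3)$ metric error for $r_k$ small; this contradicts the minimality of $E$, so $p\notin\spt(T)$.

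The most delicate step is the tangent cone analysis: establishing monotonicity up to the edge (routine, as the boundary term of $\cF$ contributes at most $O(r^2)$) and, more importantly, ruling out multi-sheeted tangent cones and cone sheets coinciding with $F_j$ or $B$. These are eliminated by using connectedness and contractibility of $E$ together with the obstacle $E\cap B=\emptyset$, but care is needed to ensure these constraints pass to the blow-up limit. A secondary technical point is the rigorous transfer of the infinitesimal gain from the tangent cone to a physical competitor on $M$, which amounts to controlling the metric correction on a ball of radius $\sim r_k$ and can be handled by standard comparison.
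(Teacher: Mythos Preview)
Your strategy is the same blow-up-and-classify-the-tangent-cone argument as the paper's, and your explicit triangular competitor with the computation $\Delta\cF=\delta(\cos(\gamma_j-\alpha)-1)<0$ is correct; it is exactly the content behind the paper's one-line claim that ``no two-plane through $q_\infty$ can be the minimizer''. However, two steps in your tangent-cone analysis are not justified, and the fixes you propose in your last paragraph do not work.

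\emph{Nonemptiness of the tangent cone.} From $p\in\spt(T)$ you must deduce $T_\infty\neq 0$, i.e.\ a positive lower density bound at the edge point. Your ``interior area monotonicity together with the crude bound $\le Cr^2$'' gives only an \emph{upper} bound for $\|T\|(B_r(p))/r^2$, and monotonicity of the signed quantity $Z=\|T\|-(\cos\gamma_j)W$ does not by itself give a lower bound either. The paper's route is more indirect: it first uses the first variation of $\cF$ against a vector field $Y$ tangent to $\partial M$ and transversal to $B$ to show that $\|\partial T\|$ assigns zero mass to the bad edge set $\cB$ (this is where $\theta_p<\gamma_j$ enters, via the strict inequality $\langle Y,\nu'-\cos\gamma_j\,\overline\nu\rangle>0$ for every admissible $\nu'$). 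Taylor's boundary regularity then gives density exactly $\tfrac12$ at nearby regular boundary points, and upper semicontinuity of density (from monotonicity of $Z$) forces density $\ge\tfrac12$ at $p$.

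\emph{Structure of the tangent cone.} You invoke ``$z$-translation invariance and the cone property'' to force $T_\infty=P_\alpha$. But the cone property gives invariance under dilations from the origin, not under $z$-translation: a plane through $0$ that does not contain the edge $L_\infty$ is a perfectly good minimal cone and is not $z$-invariant. Connectedness and contractibility of $E$ need not survive the blow-up, and in any case do not exclude such tilted planes or multi-sheeted configurations. The paper instead argues that area-minimizing cones in $\RR^3$ are planes, and then that any plane through $0$ either contains $L_\infty$ (whence the capillary condition forces angle $\gamma_j>\theta_p$ with $F_{j,\infty}$, pushing it outside the wedge) or has boundary on $B_\infty$ (contradicting $\spt(\partial T)\subset F$).

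A minor point: the ``transfer to physical scale $r_k$'' is unnecessary. Once one knows that tangent cones of minimizers minimize the blown-up functional (standard), your competitor contradicts minimality of $E_\infty$ directly, avoiding the gluing and metric-error bookkeeping.
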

	
	By a similar argument, in the case that $M$ is of prism type, $\spt(T)$ does not contain any point on $F_j\cap B_1$ where the dihedral angle is less than $\pi-\gamma_j$, or any point on $F_j\cap B_2$ where the dihedral angle is less than $\gamma_j$. Combine this with the interior maximum principle, we conclude that the minimizer to \eqref{capillary.variational.problem} lies in the interior of $M$, and hence an energy-minimizer for the $\cF$ \textit{without} any barriers. Thus the existence and regularity theory developed in \cite{Taylor77boundaryregularity}\cite{DePhilippisMaggi15regularity} concludes that the minimizer $T=\partial E\cap \mathring{M}$ exists, and is regular away from the corners.

	\begin{proof}
		Assume, for the sake of contradiction, that a point $q\in F_j\cap B$ is also in $\spt{T}$, and that the dihedral angle between $F_j$ and $B$ at $q$ is less than $\gamma_j$. In the rest of proof we use $\|T\|$ to denote the associated varifold. Fix a vector field $Y$ tangential to $\partial M$, such that $Y$ is transversal along $B$ and points into $M$ at $B$. Since $T=\partial E\cap \mathring{M}$, it is a rectifiable current with multiplicity one, the first variational formula for the energy functional $\cF$ applies:
		\begin{equation}\label{variation.varifold}
		\td{}{t}\bigg\vert_{t=0}\cF(\Psi_t(E))=-\int Hf d\|T\|+\sum_j\int \bangle{Y,\nu-(\cos\gamma_j) \overline{\nu}}d\|\partial T\|,
		\end{equation}
		where $f,\overline{\nu}$ are the geometric quantities defined as before, $H$ is the generalized mean curvature of $T$, and $\nu$ is the generalized outward unit normal of $\|\partial T\|$. Since the dihedral angle between $F_j$ and $B$ at $q$ is strictly less than $\gamma_j$, we have
		\begin{equation}\label{dihedral.inquality}
		\bangle{Y,\nu'-\cos\gamma_j\overline{\nu}}>0,
		\end{equation}
		for any $\nu'$ at $q$ which is the outward unit normal vector of some two-plane in $T_q M$. By the interior maximum principle, $H\equiv 0$.\footnote{The same argument here applies to the general case where the barrier $B$ has bounded mean curvature, see Remark \ref{remark.not.necessary.mean.convex}.} Hence
		\begin{equation}\label{boundary.almost.regular}
		\|\partial T\|(\spt(T)\cap \cB)=0,\end{equation}
		where 
		\[\cB=\bigcup_j\{q\in F_j\cap B:\text{the dihedral angle at $q$ is less than $\gamma_j$.}\}\]
		
		The boundary regularity theorem of Taylor \cite{Taylor77boundaryregularity} implies that for any point $q'\in \partial T\setminus  \cB$, the current $T$ is smooth up to $q'$. In particular, the density of $T$ at $q'$ is given by $\Theta^2(\|T\|,q')=\frac{1}{2}$. Denote $W$ the two dimensional varifold $v(\partial E\cap F_j)$ associated with $E\cap F_j$, $Z=\|T\|-\cos\gamma_j W$. Since the faces $F_j$ and $B$ intersects smoothly at $q$, we have the following monotonicity formula (we delay the derivation of a more general monotonicity formula in the next section, see \eqref{monotonicity.for.corners}):
		\begin{equation}\label{boundary.monotonicity}
		\exp(cr^\alpha)\frac{\|Z\|(B_r(q))}{r^2} \text{ is increasing in $r$,}\end{equation}
		for $r$ sufficiently small, where $c$ and $\alpha>0$ depends only on the geometry of $F_j$ and $B$. It is then straightforward to check as in Theorem 3.5-(1) in \cite{Allard75boundarybehavior} that the $\theta^2(\|T\|,\cdot)$ is an uppersemicontinuous function on $\spt(T)\cap\partial T$. By \eqref{boundary.almost.regular} we then conclude
		\begin{equation}\label{boundary.lower.density.bound}
		\Theta^2(\|T\|,\cdot)\ge \frac{1}{2}>0\end{equation}
		everywhere on $\spt(T)\cap \partial T$.
		
		Consider a tangent cone $T_\infty$ of $T$ at $q$. Let $E_\infty$ be the associated three dimensional current with $T_\infty=\partial E_\infty$. By the monotonicity \eqref{boundary.monotonicity} and the lower density bound \eqref{boundary.lower.density.bound}, $T_\infty$ is a nonempty cone in $C$ through $q_\infty$, where $C$ is the region in $\RR^3$ enclosed by the two planes $F_{j,\infty}$ and $B_\infty$ intersecting at an angle $\gamma'<\gamma_j$, and where $q_\infty\in F_\infty\cap B_\infty$. By scaling, for any open set $U\subset\subset \RR^3$, $E_\infty$ minimizes the energy functional
		\begin{equation}\label{variational.problem.euclid}
		\cF_\infty(E')=\cH^2(\partial E'\cap \mathring{C}\cap U)-(\cos\gamma_j) \cH^2(\partial E'\cap F_\infty\cap U)\end{equation}
		among open sets $E'$ with $\partial E'\subset \overline{F_\infty}$. Since two-planes are the only minimal cones in $\RR^3$, $T_\infty$ is a two-plane through $q_\infty$. However, since $\measuredangle(F_\infty,\mathring{B_\infty})<\gamma_j$, no two-plane through $q_\infty$ can be the minimizer of \eqref{variational.problem.euclid}. Contradiction.
		
	\end{proof}
	
	\begin{rema}\label{remark.not.necessary.mean.convex}
		The above proof only uses the fact that $T$ is minimal in a very weak manner. In fact, the same argument holds under the assumption that the generalized mean curvature $H$ is bounded measurable. This is implied, for instance, by that the barrier $\overline{B}$ has bounded mean curvature (instead of being mean convex).
	\end{rema}
	
	\begin{rema}
		The fact that $T$ is energy minimizing is only used to guarantee the existence of an area minimizing tangent cone. Motivated by \cite{SolomonWhite89maximum}, we speculate that a similar statement should hold for varifolds with boundary that are stationary for the energy functional \eqref{capillary.variational.problem}.
	\end{rema}

	\subsection{Regularity at the corners}
	We proceed to study the regularity of the minimizer $T=\partial E\cap \mathring{M}$ at the corners. Since $T$ is regular away from the corners, our idea is to adapt the argument of Simon \cite{Simon80regularity}, and prove $\spt(T)$ is graphical near a corner. We refer the readers to \cite{Simon80regularity} for full details. Then we apply the theorem of Lieberman \cite{Lieberman88Holder} to conclude that $\spt(T)$ has a H\"older continuous unit normal vector field to its corners. 
	
	Consider any two adjacent side faces $F_j,F_{j+1}$ and let $L=F_j\cap F_{j+1}$. Without loss of generality let $j=1$. Fix a point $q\in \spt(T)\cap L$. Let $\theta$ be the angle between $F_1$ and $F_2$ at $q$. Recall that we assume 
	\begin{equation*}
	|\pi-(\gamma_1+\gamma_2)|<\theta\le\theta',
	\end{equation*}
	where $\theta'$ is the (constant) dihedral angle between corresponding faces in the Euclidean model. To start the regularity discussion, we first make the following simple calculation in Euclidean space.
	
	\begin{lemm}\label{lemma.Euclidean.angle.condition}
		Let $\Gamma_1,\Gamma_2$ be two half-planes in $\RR^3$, enclosing a wedge region $W$ with opening angle $\theta'\in (0,\pi)$. Suppose $\Gamma$ is a plane in $\RR^3$, such that the dihedral angle between $\Gamma$ and $\Gamma_i$, $i=1,2$, is $\gamma_i\in (0,\pi)$. Then we have
		\[|\pi-(\gamma_1+\gamma_2)|<\theta'<\pi-|\gamma_1-\gamma_2|.\]
	\end{lemm}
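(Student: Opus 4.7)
The plan is to reduce the lemma to standard spherical triangle inequalities applied to three unit vectors on $S^2$. After translating so that all three planes pass through the origin, I would choose unit normals $n_1,n_2$ to $\Gamma_1,\Gamma_2$ pointing outward from the wedge $W$; a direct computation in a plane orthogonal to $L=\Gamma_1\cap\Gamma_2$ shows that the spherical distance between $n_1$ and $n_2$ equals $\pi-\theta'$ (the supplement of the opening angle, since outward normals diverge as the wedge opens). Next, I would choose a unit normal $n$ to $\Gamma$; by possibly replacing $n$ with $-n$, we may assume the single convention $\angle(n,n_i)=\gamma_i$ holds simultaneously for $i=1,2$.

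The three points $n,n_1,n_2\in S^2$ are three unit vectors in $\RR^3$ whose pairwise angles are $a=\gamma_1$, $b=\gamma_2$, $c=\pi-\theta'$, all lying in $[0,\pi]$. The central tool is that the Gram matrix of three unit vectors in $\RR^3$ is positive semi-definite, which (for variables in $[0,\pi]$) is equivalent to the usual spherical triangle inequalities $a\le b+c$ and permutations, together with the perimeter bound $a+b+c\le 2\pi$. Applied to the present triple, $c\le a+b$ gives $\pi-\theta'\le\gamma_1+\gamma_2$, i.e.\ $\pi-(\gamma_1+\gamma_2)\le\theta'$; the perimeter bound $a+b+c\le 2\pi$ gives $(\gamma_1+\gamma_2)-\pi\le\theta'$; and $|a-b|\le c$ gives $|\gamma_1-\gamma_2|\le\pi-\theta'$, i.e.\ $\theta'\le\pi-|\gamma_1-\gamma_2|$. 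Combining the first two yields $|\pi-(\gamma_1+\gamma_2)|\le\theta'$, and the third is exactly the right-hand inequality.

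The remaining step is to upgrade these to strict inequalities. Equality in any of the Gram-matrix constraints is equivalent to $n,n_1,n_2$ being linearly dependent. Since $n_1$ and $n_2$ are each perpendicular to $L$, their span is the plane orthogonal to $L$, so linear dependence of the three normals forces $n\perp L$, equivalently $\Gamma\supset L$. Under the nondegenerate interpretation of the hypotheses --- where $\Gamma$ is transversal to the edge $L$, which is the only case of interest in the subsequent applications to tangent planes of capillary surfaces meeting each side face transversally --- this degenerate configuration is excluded, so the inequalities are strict.

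I do not expect a substantive obstacle in carrying this out; the argument is purely elementary spherical trigonometry. The only care required is orientation bookkeeping: verifying that the \emph{outward} wedge normals give the supplement $\pi-\theta'$ rather than $\theta'$, and that a single side-choice for $\Gamma$ simultaneously realizes both prescribed $\gamma_i$, which is what allows the three spherical distances to be fit into a single spherical triangle.
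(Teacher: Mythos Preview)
Your argument is correct and is essentially the paper's proof in different packaging. The paper likewise chooses outward wedge normals $\nu_1,\nu_2$ (so $\nu_1\cdot\nu_2=-\cos\theta'$) and a normal $\nu$ to $\Gamma$ with $\nu\cdot\nu_i=\cos\gamma_i$, then derives the same cosine inequalities by computing $(\nu_1\times\nu)\cdot(\nu_2\times\nu)=\nu_1\cdot\nu_2-(\nu_1\cdot\nu)(\nu_2\cdot\nu)$ and applying Cauchy--Schwarz, with strictness coming from the same transversality of $\Gamma$ to $L$; this is exactly the Gram-matrix/spherical-triangle inequality you invoke, just written out by hand.
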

	\begin{proof}
		Let $\nu_i$ be the unit normal vector of $\Gamma_i$ pointing out of $W$, $\nu$ is the unit normal vector of $\Gamma$. Then by assumption, we have that
		\[\nu_1\cdot \nu_2=-\cos \theta',\quad \nu_i\cdot \nu=\cos \gamma_i, \quad i=1,2.\]
		We calculate
		\[(\nu_1\times \nu)\cdot (\nu_2\times \nu)=\nu_1\cdot \nu_2-(\nu_1\cdot\nu)(\nu_2\cdot \nu)=-\cos\theta'-\cos\gamma_1\cos\gamma_2.\]
		On the other hand, we have that $|\nu_1\times \nu|=\sin\gamma_i$, $i=1,2$. Hence $|(\nu_1\times \nu)\cdot (\nu_2\times \nu)|\le \sin\gamma_1\sin\gamma_2$. Since $\Gamma$ intersects $\Gamma_1,\Gamma_2$ transversely, we actually have $|(\nu_1\times \nu)\cdot (\nu_2\times \nu)|< \sin\gamma_1\sin\gamma_2$. By a simple calculation, this implies that
		\[\cos(\gamma_1+\gamma_2)<\cos(\pi-\theta')<\cos(\gamma_1-\gamma_2).\]
		Thus $|\pi-(\gamma_1+\gamma_2)|<\theta'<\pi-|\gamma_1-\gamma_2|$, as desired.
	\end{proof}

	We therefore may assume that 
	\begin{equation}\label{recall.angle.condition}
	|\pi-(\gamma_1+\gamma_2)|<\theta<\pi-|\gamma_1-\gamma_2|.
	\end{equation}
	As an immediate observation following Lemma \ref{lemma.Euclidean.angle.condition}, \eqref{recall.angle.condition} is a necessary condition for the regularity statement in Theorem \ref{theorem.existence.regularity}\footnote{When condition \eqref{recall.angle.condition} is not satisfied, we conjecture that there will be a ``cusp" singularity forming at the corner. For instance, see (0.4) and (0.5) in \cite{Simon80regularity}, and the discussion therein.}. Precisely, if the capillary surface $\Sigma$ is $C^{1,\alpha}$ regular up to the corners, its tangent plane at the corner satisfies the assumption of Lemma \ref{lemma.Euclidean.angle.condition}, imposing the range for $\theta$ as in \eqref{recall.angle.condition}. To prove Theorem \ref{theorem.existence.regularity}, we verify that condition \eqref{recall.angle.condition} is also sufficient to guarantee the regularity of $\Sigma$.
	
	For $\rho>0$, denote $C_\rho=\{x\in M:\dist_M(x,L)<\rho\}$, $B_\rho=\{x\in M:\dist_M(x,q)<\rho\}$. In this section, and subsequently, let $c$ be a constant that may change from line to line, but only depend on the geometry of the polyhedron $M$. Our argument is parallel to that of \cite{Simon80regularity}: we prove a uniform lower density bound around $q$, and consequently analyze the tangent cone at $q$. 
	
	\subsubsection{Lower density bound}
	Our first task is to establish an upper bound for the area of $T$. Precisely, we prove:
	\begin{lemm}\label{lemma.rough.H^2.estimate}
		For $\rho$ small enough, $\cH^2(T\cap C_\rho)\le c\rho$.
	\end{lemm}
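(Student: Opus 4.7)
The proof follows the standard cut-and-compare scheme driven by the energy-minimality of $E$. The geometric input I would use is that $L$ is a smooth one-dimensional arc of bounded length, so that, for small $\rho$, its tube $C_\rho$ in $M$ has lateral surface $\partial C_\rho \cap \mathring M$ of $\cH^2$-measure at most $c\rho$, the two strips $F_1 \cap C_\rho$ and $F_2 \cap C_\rho$ along $L$ in the adjacent side faces each have $\cH^2$-measure at most $c\rho$, and the volume of $C_\rho$ itself is $O(\rho^2)$.

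The main step is to exhibit a competitor $\tilde E \in \sE$ that coincides with $E$ outside $C_\rho$. Granted such $\tilde E$, the interior boundary $\partial \tilde E \cap \mathring M$ inside $C_\rho$ lies on $\partial C_\rho$ together with a few small surgery patches, and the face boundaries $\partial \tilde E \cap F_j$ differ from those of $E$ only on thin strips of area $O(\rho)$. Inserting $\tilde E$ into $\cF(E) \le \cF(\tilde E)$ and cancelling the contributions from outside $C_\rho$, one is left with
\[
\cH^2(T \cap C_\rho) \;\le\; \cH^2\bigl(\partial \tilde E \cap \mathring M \cap C_\rho\bigr) \;+\; \sum_{j=1,2}|\cos\gamma_j|\,\cH^2(F_j \cap C_\rho),
\]
whose right-hand side is bounded by $c\rho$ by the geometric estimates above. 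This proves the lemma.

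I expect the main obstacle to be constructing a competitor compatible with the topological constraints defining $\sE$: the side edge $L$ typically has an endpoint at the cone vertex $p$ (cone case) or on a base face $B_i$ (prism case), so the naive choices $\tilde E = E \setminus C_\rho$ or $\tilde E = E \cup C_\rho$ either remove $p$ from $\tilde E$ or let $\tilde E$ touch the forbidden base face, and hence fall outside $\sE$. I would address this by localizing the surgery: take $\tilde E = E \setminus C_\rho$ along the portion of $L$ away from the constrained endpoints, and near each constrained endpoint patch back a small neighborhood of $p$ (respectively, of the appropriate vertex of $B_2$) in order to restore the constraint. The extra boundary this surgery creates consists of a bounded number of cross-sectional disks in $C_\rho$, each of $\cH^2$-area $O(\rho^2)$; this is absorbed into the $O(\rho)$ bound and leaves $\tilde E$ open, contractible, and inside $\sE$, so the minimality comparison is legal.
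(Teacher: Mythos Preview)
Your proposal is correct and follows essentially the same cut-and-compare scheme as the paper: construct a competitor that agrees with $E$ outside $C_\rho$ and has small interior boundary inside $C_\rho$, then invoke $\cF(E)\le\cF(\tilde E)$ and use $\cH^2(F_j\cap C_\rho)\le c\rho$. The paper's version is terser---it fixes once and for all a small neighborhood $E'$ of $p$ (or of $B_2$) and compares $E$ against $E'$ in $C_\rho$, which amounts to your ``patch back a small neighborhood of $p$''---so your surgery near the constrained endpoints of $L$ is exactly the device the paper uses implicitly, just spelled out more carefully.
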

	
	\begin{proof}
		This is straightforward consequence of the fact that $T=\partial E\cap \mathring{M}$ minimizes the energy $\cF$. In fact, for any open subset $U\subset \subset M$, $E$ minimizes the functional 
		\[\cH^2(E'\cap \mathring{M}\cap U)-\sum_{j=1}^k(\cos\gamma_j)\cH^2(\partial E'\cap \partial M\cap U)\]
		among all sets $E'\subset M$ with finite perimeter, $p$ (or $B_2$)$\subset E'$, $E'\cap B=\emptyset$. In particular, choose $E'$ to be a small open neighborhood of $p$ when $M$ is a $(B,p)$-cone, and a small tubular neighborhood of $B_2$ when $M$ is a $(B_1,B_2)$-prism. Let $T'=\partial E'\cap \mathring{M}$. Choose $U=C_\rho$. We conclude that 
		\begin{multline}
		\cH^2(T\cap C_\rho)-\sum_{j=1}^2(\cos \gamma_j)\cH^2(\partial E\cap C_\rho\cap F_j)\\
		\le \cH^2(T'\cap C_\rho)-\sum_{j=1}^2(\cos\gamma_j)\cH^2(\partial E'\cap C_\rho\cap F_j).
		\end{multline}
		By the rough estimate that
		\[\cH^2(C_\rho\cap F_j)\le c\rho \quad\text{and}\quad \cH^2(C_\rho \cap B)\le c\rho^2,\]
		we conclude the proof.
	\end{proof}
	
	Denote $\Sigma=\spt(T)\setminus L$. Since the mean curvature of $T$ is zero in its interior, from the first variation formula for varifolds, we have that, for any $C^1$ vector field $\phi$ compactly supported in $M\setminus L$,
	\begin{equation}\label{general.first.variation}
	\int_\Sigma \Div_\Sigma \phi d\cH^2=\int_{\partial\Sigma}\phi\cdot \nu d\cH^1.
	\end{equation}
	We first bound the length of $\partial \Sigma$. Precisely, let $r$ be the radial distance function $r=\dist(\cdot,L)$, let $\phi$ be any vector field, supported in $M$ with $\sup r|D\phi|<\infty$ and $C^1$ in $\mathring{M}$. (Note that we allow $\phi$ to have support on $L$.) By a standard approximation argument as in \cite{Simon80regularity}, we deduce that
	\begin{multline}\label{first.variation.phi}
	\rho^{-1}\int_{\Sigma\cap C_\rho}\phi\cdot \nabla_\Sigma r d\cH^2-\int_{\partial\Sigma}\min\left\{\frac{r}{\rho},1\right\}\phi\cdot \nu d\cH^1\\
	=-\int_\Sigma \min\left\{\frac{r}{\rho},1\right\}\Div_\Sigma \phi d\cH^2.
	\end{multline}
	
	We are going to use \eqref{first.variation.phi} in two different ways. By the angle assumption \eqref{recall.angle.condition}, $|\pi-(\gamma_1+\gamma_2)|<\theta$. Therefore in the $2$-plane $(T_qL)^\perp\subset T_qM$, there is a unit vector $\tau$ such that
	\begin{equation}\label{angle.inequality.tau1}
	(-X)\vert_{F_j}\cdot \tau>\cos\gamma_j, \quad j=1,2,
	\end{equation}
	where $(-X)\vert_{F_j}$ is the inward pointing unit normal vector of $\partial M\subset M$, restricted to the face $F_j$. Extend $\tau$ in a neighborhood of $q\in M$ as a constant vector field, and with slight abuse of notation, denote this constant vector field also by $\tau$. In \eqref{first.variation.phi}, replace $\phi$ to be the constant vector $\tau$. \eqref{angle.inequality.tau1} then implies that
	\begin{equation}\label{angle.inequality.tau2}
	(-\nu)\cdot \tau\ge c>0,
	\end{equation}
	in $\Sigma\cap C_{\rho_0}$, for sufficiently small $\rho_0$. Taking $\rho\rightarrow 0$ in \eqref{first.variation.phi} and using Lemma \ref{lemma.rough.H^2.estimate}, we deduce that
	\begin{equation}\label{boundary.length.estimate}
	\cH^1(\partial \Sigma\cap C_\rho)<\infty.
	\end{equation}
	
	The angle assumption $\theta<\pi$ then guarantees that the vector $\tau\in T_qM$ defined above also satisfies
	\begin{equation}\label{angle.inequality.tau3}
	\tau\cdot \nabla_M r\ge c>0,
	\end{equation}
	where $r$ is the radial distance function. See Figure \ref{angle.depiction.1} for an illustration of the choice of $\tau$.
	\begin{figure}[htbp]
		\centering
		\includegraphics{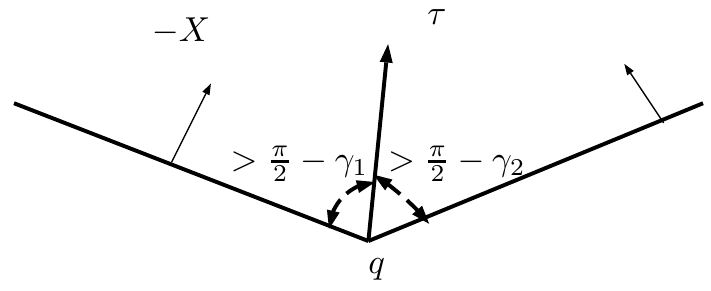}
		\caption{The choice of the vector $\tau$.}
		\label{angle.depiction.1}
	\end{figure}
	
	Now we use \eqref{first.variation.phi} in a second way. We replace $\phi$ by $\psi\tau$, where $\tau$ is the constant vector field defined in a neighborhood of $q\in M$ as above. Then by the same argument as (1.8)-(1.10) in \cite{Simon80regularity},
	\begin{multline}
	\rho^{-1}\int_{\Sigma\cap C_\rho}\psi\tau\cdot \nabla_M r d\cH^2-\int_{\partial \Sigma}\psi\tau\cdot \nu d\cH^1\\
	\le c\int_{\Sigma}(\psi+|\nabla_M \psi|)d\cH^2+o(1).
	\end{multline}
	As a consequence of \eqref{angle.inequality.tau2} and \eqref{angle.inequality.tau3},
	\begin{equation}\label{varifold.inequality.required.in.allard}
	\|\delta T\|(\psi)\le c\int (\psi+|\nabla_M \psi|)d\|T\|,
	\end{equation}
	where here we view $T$ as the associated varifold. Then we apply the isoperimetric inequality (7.1 in \cite{Allard72firstvariationofvarifold}) and the Moser type iteration (7.5(6) in \cite{Allard72firstvariationofvarifold}) as in \cite{Simon80regularity}, and conclude that
	\begin{equation}\label{lower.density.bound}
	\cH^2(T\cap B_\rho (q))\ge c\rho^2.
	\end{equation}
	\begin{rema}
		The argument above does not use the fact that $\Sigma$ is a two dimensional surface in an essential way. The same argument should work for capillary surfaces in general dimensions.
	\end{rema}
	
	\begin{rema}
		Notice that we only require the weaker angle assumption $|\pi-(\gamma_1+\gamma_2)|<\theta<\pi$ for the lower density bound. We are going to see that the assumption $\theta<\pi-|\gamma_1-\gamma_2|$ is used to classify the tangent cone.
	\end{rema}
	
	\subsubsection{Monotonicity and tangent plane}
	We proceed to derive the monotonicity formula and study the tangent cone at a point $q\in \spt(T)\cap L$. For $j=1,2$, denote $W_j=E_j\cap (F_j\setminus L)$. We also use $W_j$ to denote the associated two dimensional varifold. The divergence theorem implies that
	\begin{equation}\label{boundary.first.variation.phi}
	\delta W_j(\psi\phi)=\int_{\partial \Sigma}\psi\phi\cdot \overline{\nu},
	\end{equation}
	where recall $\overline{\nu}$ is the unit normal vector of $\partial \Sigma$ which is tangent to $\partial M$ and points outward $E$.
	
	Since $\phi$ is tangential on $F_j\setminus L$, $\nu-(\cos\gamma_j)\overline{\nu}$ is normal to $\phi$. We then multiply $-\cos\gamma_j$ to \eqref{boundary.first.variation.phi} and add the result to \eqref{general.first.variation}, thus obtaining
	\begin{equation}\label{capillary.varifold.first.variation}
	\left[\delta \|T\|-\sum_{j=1}^2(\cos \gamma_j)\delta W_j\right](\psi\phi)=0.
	\end{equation}
	Denote $Z=\|T\|-\sum_{j=1}^2 (\cos \gamma_j)W_j$. Now since $F_1,F_2$ are smooth surfaces intersecting transversely on $L$, we may choose a vector field $\phi$ that is $C^{1,\alpha}$ close to the radial vector field $\nabla_M\dist(\cdot,q)$. For a complete argument, see (2.4) of \cite{Simon80regularity}. Then by a minor modification of the argument of 5.1 in \cite{Allard72firstvariationofvarifold}, we conclude that
	\begin{equation}\label{monotonicity.for.corners}
	\exp(c\rho^\alpha)\frac{\|Z\|(B_\rho(0))}{\rho^2} \text{ is increasing in $\rho$, for $\rho<\rho_0$.}
	\end{equation}
	
	We thus deduce from \eqref{lower.density.bound} and \eqref{monotonicity.for.corners} that there is a nontrivial tangent cone $Z_\infty$ of $Z$ at $q$. Precisely, under the homothetic transformations $\mu_r$ defined by $x\mapsto r (x-q)$ ($r>0$), $(\mu_{r_k \#}T,\mu_{r_k \#}W_1,\mu_{r_k \#}W_2,\mu_{r_k \#}Z)$ subsequentially converges to $Z_\infty=\|T_\infty\|-\sum_{j=1}^2W_{j,\infty}$ in $\RR^3$. Let $F_{j,\infty}$, $j=1,2$, denote the corresponding limit planes in $\RR^3$ of $F_j$, $L_\infty=F_{1,\infty}\cap F_{2,\infty}$. Denote $P_\infty=L^\perp$ the two-plane through $0$ that is perpendicular to $L$. Denote $D_r$ the open disk of radius $r$ centered at $0$ on the plane $P_\infty$.

	\begin{prop}\label{prop.tangent.cone.at.the.corner}
		The tangent cone $T_\infty\subset\RR^3$ is a single-sheeted planar domain that intersects $F_{j,\infty}$, $j=1,2$, at angle $\gamma_j$. Moreover, it is \textit{unique}. Namely, $T_\infty$ does not depend on the choice of subsequence for its construction.
	\end{prop}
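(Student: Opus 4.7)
My plan is to classify the subsequential tangent cone $(T_\infty, W_{1,\infty}, W_{2,\infty})$ via a dimensional reduction to a planar minimization problem, and then observe that uniqueness across subsequences follows automatically from the classification together with Lemma \ref{lemma.Euclidean.angle.condition}.

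By the standard scaling argument, $T_\infty$ is dilation-invariant (a cone with vertex $0$) and minimizes the Euclidean capillary energy $\cF_\infty$ from \eqref{variational.problem.euclid} in the wedge $W_\infty$ bounded by the tangent planes $F_{1,\infty}, F_{2,\infty}$. The central step is dimensional reduction along $L_\infty$: for any $p \in L_\infty \setminus \{0\}$, the cone property centered at the origin forces the tangent cone of $T_\infty$ at $p$ to be invariant under translation along the $L_\infty$ direction (one may translate by $t\tau$, $\tau = p/|p|$, via the rescaling $(1+t/|p|)p$, which is a cone symmetry absorbed in the blow-up). The tangent cone at $p$ is therefore a cylinder over a $1$-dimensional Euclidean capillary minimizing cone in the planar wedge $L_\infty^\perp \cap W_\infty$ of opening angle $\theta$. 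Using the constant vector field $\tau$ constructed in \eqref{angle.inequality.tau1}--\eqref{angle.inequality.tau3} as a planar calibration, I would verify that this $1$-dimensional minimizing cone is a single ray from the origin meeting the two boundary rays at angles $\gamma_1, \gamma_2$. Lemma \ref{lemma.Euclidean.angle.condition}, applied to the Euclidean angle data $(\theta, \gamma_1, \gamma_2)$, then guarantees this ray (and hence the half-plane it spans in $\RR^3$) is uniquely determined; call it $\Pi$.

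Having identified the tangent cone of $T_\infty$ at every $p \in L_\infty \setminus \{0\}$ as the same half-plane $\Pi$, I would conclude $T_\infty = \Pi$ as a multiplicity-one current. This uses Taylor's boundary regularity \cite{Taylor77boundaryregularity} for $T_\infty$ in the Euclidean setting, valid everywhere away from $L_\infty$, combined with the cone property at $0$ and the connectedness of $\spt(T_\infty) \setminus \{0\}$. Uniqueness across subsequences is then immediate: $\Pi$ is determined solely by the fixed Euclidean data $F_{1,\infty}, F_{2,\infty}, \gamma_1, \gamma_2$ and carries no dependence on the rescaling sequence $r_k$ used to extract the tangent cone.

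The main obstacle I anticipate is making the dimensional reduction fully rigorous in the varifold-with-boundary setting: one must carefully transfer the translation invariance obtained at each $p \in L_\infty \setminus \{0\}$ into structural information about $T_\infty$ itself, verify the planar cross-section is a bona-fide minimizer of the $1$-dimensional capillary energy, and rule out multi-sheeted planar configurations. The upper bound $\theta < \pi - |\gamma_1 - \gamma_2|$ from \eqref{recall.angle.condition} is decisive here, as it guarantees via Lemma \ref{lemma.Euclidean.angle.condition} that the planar capillary problem admits a unique minimizer rather than a degenerate family.
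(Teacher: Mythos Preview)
Your dimensional reduction along $L_\infty$ has a structural gap: taking the tangent cone of $T_\infty$ at a point $p\in L_\infty\setminus\{0\}$ presupposes that $p\in\spt(T_\infty)$. But the conclusion of the proposition is that $T_\infty=\pi_1\cap C$ for a plane $\pi_1$ \emph{transverse} to $L_\infty$, so $\spt(T_\infty)\cap L_\infty=\{0\}$ and there is no such $p$ at which to blow up. Your argument therefore never engages with the case that actually occurs. At best, the reduction could serve to \emph{rule out} the scenario $\spt(T_\infty)\supset L_\infty$, but you present it as if it identifies $T_\infty$.

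Even within that scenario, the claimed outcome of the one-dimensional problem is off. A dilation-invariant capillary minimizer in the planar wedge of opening $\theta$ is a union of rays through the origin; a single interior ray makes angles $\beta_0$ and $\theta-\beta_0$ with the two boundary rays, and these cannot both equal prescribed values $\gamma_1,\gamma_2$ unless $\gamma_1+\gamma_2=\theta$. Lemma~\ref{lemma.Euclidean.angle.condition} concerns a two-dimensional plane in $\RR^3$ meeting two half-planes, not a ray in a planar wedge, so invoking it to pin down ``the half-plane it spans'' conflates two different geometric problems. In fact, the paper's proof shows the opposite: using competitor constructions in the planar cross-section and the hypothesis $\theta<\pi-|\gamma_1-\gamma_2|$, any half-plane through $L_\infty$ is \emph{excluded} as a tangent cone.

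The paper's route is more direct and avoids this issue. It uses immediately that the only two-dimensional minimal cones in $\RR^3$ are planes, so $T_\infty$ is a finite union of planar pieces through $0$, and then splits into two cases: either a single plane transverse to $L_\infty$ (Case~1), or a union of half-planes containing $L_\infty$ (Case~2). Case~2 is eliminated by explicit one-dimensional competitors (smoothing a vertex if $N>1$; a cut-off competitor forcing $\beta_0\ge\gamma_1$ and $\theta-\beta_0\ge\pi-\gamma_2$ if $N=1$), which together with $\theta<\pi-|\gamma_1-\gamma_2|$ yield a contradiction. In Case~1, the contact-angle condition along $F_{j,\infty}$ (inherited from Taylor's boundary regularity away from $L$) forces $\measuredangle(\pi_1,F_{j,\infty})=\gamma_j$, and such a plane is unique, giving subsequence-independence.
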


	\begin{proof}
		We first notice that the tangent cone $\|T_\infty\|$ is nontrivial by virtue of \eqref{lower.density.bound}. Moreover, since $(T,E)$ solves the variational problem \eqref{capillary.variational.problem}, $(T_\infty,E_\infty)$ minimizes the corresponding energy in $\RR^3$. Precisely, let $C$ be the open set in $\RR^3$ enclosed by $F_{1,\infty}$ and $F_{2,\infty}$. Then for any open subset $U\subset\subset \RR^3$, $E_\infty$ minimizes the energy
		\begin{equation}\label{variation.in.r^3}
		\cF(E_\infty')=\cH^2(\partial E_\infty'\cap \mathring{C}\cap U)-\sum_{j=1}^2(\cos \gamma_j)\cH^2(\partial E_\infty'\cap \partial C\cap U).
		\end{equation}
		
		It follows immediately that $T_\infty$ is minimal in $\mathring{C}$. Therefore each connected component of $T_\infty$ is part of a two-plane. We conclude that
		\begin{equation}
		T_\infty=\bigcup_{j=1}^N \pi_j\cap C,
		\end{equation}
		where $\pi_j$ are planes through the origin and $\pi_i\cap\pi_j\cap C=\emptyset$ for $i\ne j$. Therefore we conclude either
		\begin{itemize}
			\item[Case 1] $N=1$ and $T_\infty=\pi_1\cap C$ for some plane $\pi_1$ such that $\pi_1\cap L_\infty=\{0\}$, or
			\item[Case 2] $N<\infty$ and $T_\infty=\cup_{j=1}^N \pi_j\cap C$, where $\pi_1,\cdots,\pi_N$ are planes with the line $L_\infty$ in common.
		\end{itemize}
		
		Now we rule out case 2 by constructing proper competitors. Notice that in case 2, $E_\infty=E^{(1)}_\infty\times \RR$ for some open $E^{(1)}_\infty\subset P_\infty$, here $P_\infty$ is a wedge region in $\RR^2$ such that $C=P_\infty\times \RR$, and $\partial E^{(1)}_\infty$ a finite union of rays emanating from the origin. Define the functional
		\begin{equation}\label{variation.in.r^2}
		\cF_\infty^{(1)}(E')=\cH^1(\partial E'\cap C\cap D_1)-\sum_{j=1}^2 (\cos\gamma_j) \cH^1(\partial E'\cap F_{j,\infty}\cap D_1).
		\end{equation}
		Notice that since $E_\infty$ minimizes \eqref{variation.in.r^3}, 
		\[\cF_\infty^{(1)}(E^{(1)}_\infty)\le \cF_\infty^{(1)}(E'),\]
		for any competitor $E'$.
		
		Observe that $P_\infty \setminus \overline{E^{(1)}_\infty}$ satisfies a variational principle similar to that satisfied by $E^{(1)}_\infty$ but with $\pi-\gamma_j$ in place of $\gamma_j$. In case $N>1$, we may simply ``smooth out" the vertex of $(\pi_1\cap \pi_2)\cap \overline{D_1}$ to decrease the functional $E^{(1)}_\infty$. Thus $N=1$. Without loss of generality assume that $\gamma_1\le \gamma_2$.
		
		To show that $N=1$ in case 2 cannot happen, we first observe that if $\beta_0$ is the angle formed by $E^{(1)}_\infty$ and $F_{1,\infty}$ at $0$, then $\beta_0\ge\gamma_1$. Otherwise we may construct a competitor $E'$ that has strictly smaller functional \eqref{variation.in.r^2} as follows. Let $q_1\in \partial D_{1/2}\cap (\partial E^{(1)}_\infty\setminus \partial C)$ and let $q_2$ be the point on $\partial E^{(1)}_\infty\cap F_{1,\infty}$ at distance $\ep$ from $0$. Then let $E'=E^{(1)}_\infty\setminus H$, where $H$ is the closed half plane with $0\in H\setminus \partial H$ and $\{q_1,q_2\}\in \partial H$. If $\beta_0<\gamma_1$, then it is easily checked (as illustrated in Figure \ref{pic.competitor}) that
		\[\cF_\infty^{(1)}(E')<\cF_\infty^{(1)}(E_\infty^{(1)}).\]
		
		\begin{figure}[htbp]
			\centering
			\includegraphics{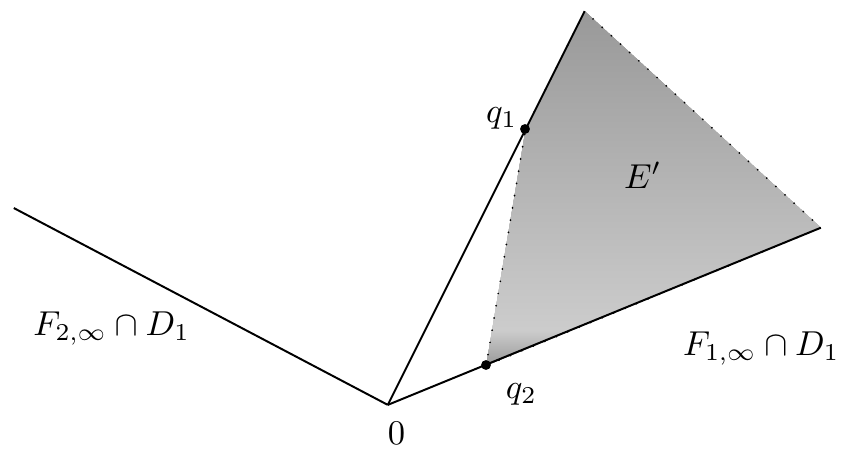}
			\caption{The construction of a competitor when $\beta_0<\gamma_1$.}
			\label{pic.competitor}
		\end{figure}
		
		On the other hand, since $P_\infty \setminus \overline{E^{(1)}_\infty}$ satisfies a similar variational principle with angle $\pi-\gamma_j$ in place of $\gamma_j$, we deduce that
		\[\theta-\beta_0\ge \pi-\gamma_2.\]
		We therefore conclude that $\theta\ge \pi+\gamma_1-\gamma_2$, contradiction. Thus case 2 is impossible.

		In case 1, $T_\infty$ contains a single sheet of plane. Namely, there exists some plane $\pi_1\subset \RR^3$ such that $T_\infty=\pi_1\cap C$. 
		
		Notice also that the plane $\pi_1\subset \RR^3$ should have constant contact angle along $F_{j,\infty}$, $j=1,2$:
		\begin{equation}\label{condition.for.pi1}
		\measuredangle(\pi_1,F_{1,\infty})=\gamma_1,\quad \measuredangle(\pi_1,F_{2,\infty})=\gamma_2,
		\end{equation}
		since everywhere on $\partial \Sigma\cap (F_j\setminus L)$, $\Sigma$ and $F_j$ meet at constant contact angle $\gamma_j$. We point out that the angle assumption \eqref{recall.angle.condition} is also a \textit{necessary and sufficient condition} for the existence of $\pi_1\subset \RR^3$. As a consequence, $T_\infty=\pi_1\cap \mathring{C}$ with $\pi_1$ uniquely determined by \eqref{condition.for.pi1}, independent of choice of the particular sequence of $r_k$ chosen to construct $T_\infty$. In other words, we have the strong property that the tangent cone is \textit{unique} for $T$ at $q$.
	\end{proof}
	
	\begin{rema}
		This part of the argument relies heavily on the fact that $T$ is two dimensional in two ways:
		\begin{itemize}
			\item The planes are the only minimal cones in $\RR^3$.
			\item A plane is uniquely determined by its intersection angles with two fixed planes.
		\end{itemize}
		Neither of these two statements is valid in higher dimensions.
	\end{rema}
	
	\begin{rema}
		The proof suggests that without the upper bound $\theta<\pi-|\gamma_1-\gamma_2|$, the tangent cone of $T$ at the corners could be a half plane through $L_\infty$. Moreover, $T_\infty$ may depend on the choice of the sequences of $r_k$ in its construction.
	\end{rema}
	
	\subsubsection{Curvature estimates and consequences}
	We prove a curvature estimate for $\Sigma$ near the corner $q$. Combined with the uniqueness of tangent cone, we deduce that $\Sigma$ must be graphical over its tangent plane at $q$. Then we may apply the PDE theory from \cite{Lieberman88Holder} to conclude that $\overline{\Sigma}$ is a $C^{1,\alpha}$ surface.
	
	We begin with the following lemma, which is a consequence of the monotonicity formula.
	
	\begin{lemm}\label{lemma.stable.surfaces.in.a.wedge}
		Let $C\in \RR^3$ be an open subset enclosed by two planes $F_1,F_2$ with $\measuredangle(F_1,F_2)=\theta$. Let $\Sigma$ be an area minimizing surface in $C$ such that $\Sigma$ intersects $F_j$ at constant angles $\gamma_1,\gamma_2$, and that $\cH^2(\Sigma\cap B_0(R))<C_0 R^2$ holds for large $R$ and some $C_0>0$. Assume also that
		\[|\pi-(\gamma_1+\gamma_2)|<\theta<\pi-|\gamma_1-\gamma_2|.\]
		Then there is a plane $\pi_1\subset \RR^3$ such that $\Sigma=\pi_1\cap C$.
	\end{lemm}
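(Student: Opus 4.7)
The plan is to mimic the blow-up analysis of Proposition~\ref{prop.tangent.cone.at.the.corner}, but applied asymptotically at infinity rather than at a corner point. The crucial simplification in the present Euclidean setting is that the monotonicity formula \eqref{monotonicity.for.corners} holds \emph{exactly}: because the faces $F_1$ and $F_2$ are flat, the curvature-induced factor $\exp(c\rho^\alpha)$ disappears. Setting $Z=\|T\|-\sum_{j=1}^{2}(\cos\gamma_j)W_j$ with $T$ the varifold of $\Sigma$ and $W_j$ the varifold associated to $E\cap F_j$, I would first establish that for any $p\in L:=F_1\cap F_2$ the ratio $\|Z\|(B_\rho(p))/\rho^2$ is monotone non-decreasing in $\rho$. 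The quadratic area hypothesis $\cH^2(\Sigma\cap B_R(0))\le C_0 R^2$ then makes this ratio bounded, so the limit $\Theta_\infty:=\lim_{\rho\to\infty}\|Z\|(B_\rho(p))/\rho^2$ exists, is finite, and is independent of $p\in L$ by translation invariance along $L$.

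Next I would construct the tangent cone at infinity $T_\infty$ as a subsequential limit of $\mu_{r_k\#}T$ for $r_k\to 0$. Compactness of integer-rectifiable currents with bounded mass yields a limit $T_\infty$ which minimizes the corresponding Euclidean functional \eqref{variation.in.r^3} on any open $U\subset\subset\RR^3$, meets each $F_j$ at constant angle $\gamma_j$, and is a cone based at $0$ of density $\Theta_\infty$. The classification argument from the proof of Proposition~\ref{prop.tangent.cone.at.the.corner}---which rests only on the facts that planes are the sole area-minimizing cones in $\RR^3$ and that the angle condition $|\pi-(\gamma_1+\gamma_2)|<\theta<\pi-|\gamma_1-\gamma_2|$ rules out multi-half-plane configurations through $L$---then gives $T_\infty=\pi_\infty\cap C$ for a unique plane $\pi_\infty$ satisfying $\measuredangle(\pi_\infty,F_j)=\gamma_j$.

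To conclude $\Sigma=\pi_\infty\cap C$ I would exploit the equality clause of the exact monotonicity. Picking any $p\in\overline{\Sigma}\cap L$ (the complementary case $\overline{\Sigma}\cap L=\emptyset$ is reduced to this via a sliding argument: translate $\pi_\infty$ parallel to its normal until first contact with $\Sigma$ and apply the interior strong maximum principle of \cite{SolomonWhite89maximum} together with the capillary boundary maximum principle, observing that such a contact point cannot lie on $L$ under the standing hypothesis), Proposition~\ref{prop.tangent.cone.at.the.corner} applied at $p$ produces a tangent cone $\pi_p\cap C$ with $\measuredangle(\pi_p,F_j)=\gamma_j$. Since any two such planes differ only by a translation along $L$, they have identical density at their respective cone points, so $\Theta(Z,p)=\Theta_\infty$. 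The equality case of the monotonicity formula (as in 5.1 of \cite{Allard72firstvariationofvarifold}) then forces $Z$ to be a cone with vertex $p$, and uniqueness of the tangent cone at a corner identifies $T=\pi_p\cap C$, giving the desired plane $\pi_1$. The main obstacle is justifying that the rigid equality clause of the monotonicity formula really applies to the weighted varifold $Z$ on a wedge; this is a straightforward extension of Allard's argument, enabled precisely by the fact that $F_1\cup F_2$ is itself a cone with vertex $p$, but it requires careful bookkeeping of the contributions from the two boundary terms $W_1,W_2$.
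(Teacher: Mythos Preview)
Your proposal is correct and follows essentially the same route as the paper: compare the tangent cone of $\Sigma$ at a corner point $p\in\overline\Sigma\cap L$ with the tangent cone at infinity, observe that both are forced (via the classification in Proposition~\ref{prop.tangent.cone.at.the.corner}) to equal the \emph{unique} plane $\pi\cap C$ meeting $F_j$ at angle $\gamma_j$, and then invoke the equality case of the (exact, Euclidean) monotonicity formula for $Z$ to conclude that $\Sigma$ is itself that cone. The paper's version is simply terser: it writes ``without loss of generality assume $0\in\overline\Sigma$'' and omits the sliding/maximum-principle digression you outline for the case $\overline\Sigma\cap L=\emptyset$.
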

	
	\begin{proof}
		Without loss of generality assume $0\in\overline{\Sigma}$. Consider the tangent cone of $\Sigma$ at $\infty$ and at $0$. Since $\Sigma$ satisfies the angle assumption \eqref{recall.angle.condition}, its tangent cone at $0$, denoted by $\Sigma_0$, exists and is planar. Now by the monotonicity formula \eqref{monotonicity.for.corners} and the growth assumption $\cH^2(\Sigma\cap B_0(R))<C_0 R^2$, its tangent cone at infinity, denoted by $\Sigma_\infty$, exists and is an area minimizing cone. Since $\Sigma_0$ and $\Sigma_\infty$ are both minimal cones in $C\subset \RR^3$, they are parts of planes. However, the same argument as in the proof of Proposition \ref{prop.tangent.cone.at.the.corner} implies that $\Sigma_0=\Sigma_\infty=\pi\cap C$, where $\pi$ is the unique plane intersecting $F_j$ at angle $\gamma_j$. Therefore the equality in the monotonicity formula holds, and $\Sigma=\Sigma_0=\Sigma_\infty$ is planar.
	\end{proof}
	
	We are ready to prove the curvature estimates:
	
	\begin{prop}
		Let $\Sigma=\spt(T)\cap \mathring{M}$ be a minimizer of the variational problem \eqref{capillary.variational.problem}. Let $L=F_1\cap F_2$, $q\in \partial\Sigma\cap L$. Then the following curvature estimate holds:
		\begin{equation}\label{curvature.estimate.at.the.corner}
		|A_\Sigma|(x)\cdot \dist(x,q)\rightarrow 0,
		\end{equation}
		as $x\in \Sigma$ converges to $q$.
	\end{prop}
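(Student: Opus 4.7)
The plan is to argue by contradiction via a blow-up centered at $q$, combining the uniqueness of the tangent cone at $q$ (Proposition \ref{prop.tangent.cone.at.the.corner}) with the edge Bernstein rigidity of Lemma \ref{lemma.stable.surfaces.in.a.wedge}. Suppose the conclusion fails: there exist $\delta>0$ and $x_k\in \Sigma$ with $r_k:=\dist(x_k,q)\to 0$ and $|A_\Sigma|(x_k)\,r_k\ge \delta$. Let $\mu_\lambda$ denote dilation by factor $\lambda$ about $q$, and set $\Sigma_k:=\mu_{1/r_k}(\Sigma)$, $M_k:=\mu_{1/r_k}(M)$, and $y_k:=\mu_{1/r_k}(x_k)$, so that $|y_k|=1$ and the rescaled second fundamental form satisfies $|A^{(k)}|(y_k)=r_k\,|A_\Sigma|(x_k)\ge\delta$. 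By the scale-invariance of the capillary problem (the angles $\gamma_j$ are preserved), the uniform mass bound of Lemma \ref{lemma.rough.H^2.estimate}, and Proposition \ref{prop.tangent.cone.at.the.corner}, the varifolds $\Sigma_k$ converge to the unique plane $\pi_1\cap C$, where $C$ is the Euclidean tangent wedge at $q$.

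The heart of the proof is to upgrade this varifold convergence to $C^2$ convergence on compact subsets of $\overline{C}\setminus\{0\}$. Once this is done, passing to a subsequence with $y_k\to y_\infty$ (so $|y_\infty|=1$) yields $\delta\le |A^{(k)}|(y_k)\to |A_{\pi_1}|(y_\infty)=0$, the desired contradiction. Three cases arise according to the position of $y_\infty$. If $y_\infty\in\mathring{C}$, Allard's interior regularity theorem gives uniform $C^{1,\alpha}$ control of $\Sigma_k$ near $y_\infty$, bootstrapped to $C^2$ via the minimal surface equation. If $y_\infty\in F_{j,\infty}\setminus L_\infty$ lies on a smooth face, Taylor's boundary regularity theorem plays the analogous role for the contact-angle Neumann condition. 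If $y_\infty\in L_\infty\setminus\{0\}$ lies on the punctured edge of the blown-up wedge, we invoke Lieberman's regularity theorem for mixed boundary value problems on domains with edges at the corresponding points of $L_k$: the strict angle condition \eqref{recall.angle.condition} is an open hypothesis that persists uniformly in $k$ on compact subsets of $L_\infty\setminus\{0\}$, and the geometric data $(F_{j,k},L_k)$ converge smoothly there.

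The main obstacle is uniformity of edge regularity in the last case. While Lieberman's theorem applies to each $\Sigma_k$ at points of $L_k\setminus\{0\}$, the constants must remain bounded as $k\to\infty$; this is ensured because the natural $C^{1,\alpha}$ seminorm at $z\in L\setminus\{q\}$ in the original polyhedron scales at worst like $\dist(z,q)^{-1}$, which is exactly the scale undone by the blow-up $\mu_{1/r_k}$. As an alternative route more in line with the tools developed in this section, one can perform a \emph{second} blow-up at the nearest point $p_k\in L$ to $x_k$: in the doubly rescaled picture the vertex $q$ escapes to infinity and the configuration converges to an infinite wedge with capillary angles $\gamma_j$, whereupon the monotonicity formula \eqref{monotonicity.for.corners} supplies quadratic area growth and Lemma \ref{lemma.stable.surfaces.in.a.wedge} forces the secondary limit to be planar. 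This planarity then propagates the required $C^{1,\alpha}$ convergence of $\Sigma_k$ back to a neighborhood of $y_\infty$ in the first rescaling, closing the argument.
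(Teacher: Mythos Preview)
Your argument is essentially correct, but it follows a genuinely different route from the paper's. The paper does \emph{not} blow up at the corner $q$ and does not invoke the uniqueness of the tangent cone or any Allard-type $\varepsilon$-regularity. Instead it runs a standard point-picking argument: assuming $\ep_k|A_\Sigma|(q_k)=\delta_k\ge\delta$, one may (after re-choosing $q_k$) arrange $|A_\Sigma|\le 2\delta_k/\ep_k$ on $C_{2\ep_k}$, and then rescale by the \emph{curvature} scale $\delta_k/\ep_k$ centered at the \emph{bad point} $q_k$. The rescaled surfaces then carry the a~priori bound $|A|\le 2$ and $|A|(0)=1$, so convergence is automatically smooth without appealing to Allard. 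Two cases arise according to whether $\delta_k\to\infty$ (the wedge recedes and the limit is a complete area-minimizing surface in $\RR^3$, hence a plane) or $\delta_k$ stays bounded (the limit sits in the Euclidean wedge and Lemma~\ref{lemma.stable.surfaces.in.a.wedge} forces it to be planar). Either way $|A_\infty|(0)=0$ contradicts $|A|(0)=1$.

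What each approach buys: the paper's point-picking route is more self-contained, since bounded curvature yields graphicality and Schauder theory directly, avoiding the boundary Allard/Taylor $\varepsilon$-regularity for capillary conditions that your upgrade step needs (and which, while available in principle from \cite{DePhilippisMaggi15regularity}, is less standard than interior Allard). Your route, on the other hand, makes efficient use of the tangent-cone analysis already established in Proposition~\ref{prop.tangent.cone.at.the.corner}. One simplification you can make: your third case $y_\infty\in L_\infty\setminus\{0\}$ is in fact vacuous. The unique tangent plane $\pi_1$ is transversal to $L_\infty$ and meets it only at the origin; combined with the uniform lower density bounds (interior, boundary, and the corner monotonicity \eqref{monotonicity.for.corners}), any subsequential limit $y_\infty$ must lie in $\overline{\pi_1\cap C}$, which is disjoint from $L_\infty\setminus\{0\}$. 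So the delicate ``uniform Lieberman along the edge'' discussion and the secondary blow-up can be dropped.
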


	\begin{proof}
		Assume, for the sake of contradiction, that there is $\delta>0$ and a sequence of points $q_k\in \Sigma$ such that
		\[\dist(q_k,q)=\ep_k>0, \quad \ep_k|A_\Sigma|(q_k)=\delta_k>\delta.\]
		By a standard point-picking argument, we could also assume that 
		\begin{equation}\label{point.picking}
		|A_\Sigma|(x)<\frac{2\delta_k}{\ep_k}, \quad x\in C_{2\ep_k}.
		\end{equation}
		Consider the rescaled surfaces
		\[\Sigma_k=\frac{\delta_k}{\ep_k}(\Sigma-q_k)\subset \frac{\delta_k}{\ep_k}(M-q_k).\]
		Since $\delta_k>\delta$, $\ep_k\rightarrow 0$, the ambient manifold $M$ converges, in the sense of Gromov-Hausdorff, to $(C,g_{Euclid})$. Since $\Sigma_k$ is area minimizing, a subsequence (which we still denote by $\Sigma_k$) converges to an area minimizing surface $\Sigma_\infty$. By \eqref{lower.density.bound}, $\Sigma_\infty$ is nontrivial. We consider two different cases
		\begin{itemize}
			\item If $\limsup_{k}\delta_k=\infty$, then by taking a further subsequence (which we still denote by $\Sigma_k$), $\Sigma_k$ converges to an area minimizing surface in $\RR^3$. Moreover, \eqref{point.picking} guarantees that the $|A_{\Sigma_\infty}|(x)<2$ for all $x\in \RR^3$. Therefore the convergence $\Sigma_k\rightarrow \Sigma_\infty$ is, in fact, in $C^\infty$. This produces a contradiction, since $|A_{\Sigma_k}|(0)=1$ for all $k$, and $\Sigma_\infty$ is a plane through the origin.
			\item If $\limsup_k \delta_k<C<\infty$, then the sequence $\Sigma_k$ converges to an area minimizing surface in the open set $C\subset \RR^3$ enclosed by the two limit planes. This produces a similar contradiction, because $|A_{\Sigma_k}|(0)=1$, $\Sigma_k\rightarrow \Sigma_\infty$ smoothly, and by Lemma \ref{lemma.stable.surfaces.in.a.wedge}, $\Sigma_\infty$ is flat in its interior.
		\end{itemize}
	\end{proof}
	
	With the curvature estimate, we may conclude the regularity discussion by concluding that $\Sigma$ is graphical near the corner $q$:
	
	\begin{prop}\label{prop.graphical.near.corner}
		Let $\Sigma$ be an energy minimizer of \eqref{capillary.variational.problem}, $q\in \overline{\Sigma}\cap L$. Then $\Sigma$ is a graph over the tangent plane at $q$, and its normal vector extends H\"older continuously to $q$; thus $\Sigma$ is a $C^{1,\alpha}$ surface with corners.
	\end{prop}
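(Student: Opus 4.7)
The plan is to combine the curvature estimate \eqref{curvature.estimate.at.the.corner} with the uniqueness of the tangent plane from Proposition \ref{prop.tangent.cone.at.the.corner} to first establish graphicality, and then to invoke Lieberman's Hölder boundary gradient theorem to upgrade this to $C^{1,\alpha}$ regularity.

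\textbf{Step 1: graphicality via curvature decay.} First I would show that $\Sigma$ is a graph over $\pi_1\cap C$ in a neighborhood of $q$. For any $x\in\Sigma$ with $r(x)=\dist(x,q)$ small, the estimate $|A_\Sigma|(x)\cdot r(x)\to 0$ implies that, on the intrinsic ball in $\Sigma$ of radius comparable to $r(x)$ centered at $x$, the surface is extremely flat relative to its distance to $q$. More precisely, integrating $|A_\Sigma|$ along any smooth path from $x$ back to a point of $\Sigma\cap\partial B_{r(x)/2}(q)$ produces a total turning of the tangent plane that tends to zero as $x\to q$. Combined with the fact that, by Proposition \ref{prop.tangent.cone.at.the.corner}, every sequence of blow-ups $\mu_{r_k\#}T$ converges to the \emph{same} plane $\pi_1\cap C$, this forces the tangent plane of $\Sigma$ at $x$ to converge (uniformly in direction) to $\pi_1$ as $x\to q$. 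A standard implicit function / projection argument then expresses $\Sigma\cap B_\rho(q)$ for small $\rho$ as the graph of a Lipschitz function $u$ over a domain $\Omega\subset \pi_1$ with $\nabla u(y)\to 0$ as $y\to q$, and $u$ is smooth away from the edge $L\cap\pi_1$.

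\textbf{Step 2: PDE formulation.} Since $\Sigma$ is minimal in the interior and meets $F_j$ at constant angle $\gamma_j$, the graphing function $u$ satisfies the minimal surface equation in its domain and an oblique (capillary) derivative boundary condition on $\Omega\cap F_j$, namely $\bangle{\nu_u, X_j}=\cos\gamma_j$ where $\nu_u$ is the unit normal to the graph and $X_j$ is the outward unit conormal of $F_j$. Near the corner point the two boundary arcs meet at the point $q$ corresponding to $y=0$, and by Step 1 the oblique vectors at the two sides make the angle $\gamma_1+\gamma_2$ with each other, which by the angle assumption \eqref{recall.angle.condition} is strictly less than $\pi$ and strictly greater than $|\gamma_1-\gamma_2|$ away from $0$. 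Hence the problem satisfied by $u$ fits exactly into the framework of a quasilinear elliptic equation with oblique boundary conditions on a wedge domain, with transversality constant bounded away from degeneracy by \eqref{recall.angle.condition}.

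\textbf{Step 3: invoke Lieberman.} With the graph representation in hand, I would apply the boundary Hölder gradient estimate of Lieberman \cite{Lieberman88Holder}, which is designed exactly for quasilinear elliptic equations with oblique derivative conditions on domains with corners, provided the two boundary operators satisfy the non-tangentiality condition that is guaranteed here by \eqref{recall.angle.condition}. The result yields $u\in C^{1,\alpha}$ up to $0$ for some $\alpha>0$, so the unit normal of $\Sigma$ extends Hölder continuously to $q$ and $\Sigma$ is $C^{1,\alpha}$ as a surface with corners.

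\textbf{Main obstacle.} The principal difficulty is Step 1: going from the pointwise curvature decay and the uniqueness of the tangent plane (both of which are only asymptotic statements at $q$) to a genuine graph representation on a fixed neighborhood. The delicate part is controlling the behavior right up to $\partial\Sigma$ inside $F_1$ and $F_2$, where the curvature estimate is an \emph{interior} type bound; here one must use that $\Sigma$ meets $F_j$ at the fixed angle $\gamma_j$ together with a boundary version of the same tilt-excess argument, so that no piece of $\Sigma$ can fold back near the edge $L$. Once graphicality is secured, Step 3 is essentially a direct citation.
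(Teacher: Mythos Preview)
Your proposal is correct and follows essentially the same approach as the paper: establish graphicality from the curvature decay \eqref{curvature.estimate.at.the.corner} together with the uniqueness of the tangent plane (Proposition \ref{prop.tangent.cone.at.the.corner}), then cite Lieberman. The only cosmetic difference is that the paper packages your Step 1 as a clean contradiction---assuming a sequence $q_k\to q$ with $N(q_k)$ parallel to $\pi_1$, rescaling by $\dist(q_k,q)^{-1}$, and using the curvature bound to upgrade the varifold convergence $\Sigma_k\to\pi_1\cap C$ to $C^1$ convergence on a definite ball around the limit point $q_\infty$---rather than your direct ``integrate $|A|$ along a path'' formulation; the content is the same, and your worry about the boundary in the ``main obstacle'' paragraph is not addressed separately in the paper either, since the blow-up argument and varifold convergence already absorb it.
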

	
	\begin{proof}
		We first prove that $\Sigma$ is graphical near $q$. Embed a neighborhood of $q$ isometrically into some Euclidean space $\RR^N$. Take the unique plane $\pi_1\subset T_q M$ obtained above such that the tangent cone of $\Sigma$ at $q$ is $\pi_1\cap C$. Assume, for the sake of contradiction, that there is a sequence of points $q_k\in \Sigma$, $\dist_M(q_k,q)\rightarrow 0$, and that the normal vectors $N_k$ of $\Sigma\subset M$ at $q_k$ is parallel to $\pi_1$. Denote $\ep_k=\dist_M(q_k,q)$. Consider the rescaled surfaces $\Sigma_k=\ep_k^{-1}(\Sigma-q)$. By the monotonicity formula \eqref{monotonicity.for.corners} and the lower density bound \eqref{lower.density.bound}, a subsequence of $\{\Sigma_k\}$ converges to the unique tangent cone $\pi_1\cap C$ in the sense of varifolds. Notice that on $\Sigma_k$, the image of $q_k$ under the homothety has unit distance to the origin. By taking a further subsequence (which we still denote by $\{(\Sigma_k,q_k,N_k)\}$), we may assume that $q_k\rightarrow q_\infty$, $N_k\rightarrow N_\infty$, and $\dist_{\RR^3}(q_\infty,0)=1$. Now the curvature estimate \eqref{curvature.estimate.at.the.corner} implies that, 
		\[|A_{\Sigma_\infty}|(x)<2,\quad \text{for all points $x\in \Sigma\cap B_{1/2}(q_\infty)$.}\]
		For any point $x\in \Sigma_\infty$, and any curve $l$ connecting $q_\infty$ and $x$, we have
		\[|N_\infty(x)-N_\infty(q_\infty)|<\int_l |A_{\Sigma_\infty}|(y)d y.\]
		Therefore we conclude that, for points $x$ on a neighborhood $V$ of $q_\infty$ on $\Sigma_\infty$,
		\[|N_\infty(x)-N_{\pi_1}|>\frac{1}{2},\]
		where $N_{\pi_1}$ is the unit normal vector of $\pi_1$. This contradicts the fact that $\Sigma_k$ converges to $\Sigma_\infty$ as varifolds.
		
		Once we know that $\Sigma$ is a graph over $T_q\Sigma$ near $q$, the result of \cite{Lieberman88Holder} directly applies, and we conclude that $\Sigma$ has a H\"older continuous unit normal vector field up to $q$.
	\end{proof}

	\section{Non-rigid case}
	We prove Theorem \ref{theorem.comparison.nonrigid} in this section. Let $P$ be a polyhedron in $\RR^3$ of cone or prism types. Assume, for the sake of contradiction, that there exists a $P$-type polyhedron $(M^3,g)$ with $R(g)\ge 0$, $\overline{H}\ge 0$ and $\measuredangle_{ij}(M)<\measuredangle_{ij}(P)$. The strategy is to take the minimizer $\Sigma=\partial E$ of the \eqref{capillary.variational.problem}. When $M$ is of prism type, the existence and regularity of $\Sigma$ follows from the maximum principle in Proposition \ref{boundary.maximum.principle}. When $M$ is of cone type, we need the extra assumption that $\cI<0$ to guarantee that $E\ne \emptyset$. Hence we prove the following:
	
	\begin{lemm}\label{lemma.I.is.negative}
		Let $P\subset \RR^3$ be polyhedron of cone type, $(M,g)$ be of $P$-type. Assume $\measuredangle_{ij}(M)<\measuredangle_{ij}(P)$, then the infimum $\cI$ appeared in \eqref{capillary.variational.problem} is negative.
	\end{lemm}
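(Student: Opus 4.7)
First, $\cI\le 0$ is automatic: for any sequence of open sets $\{E_n\}$ shrinking to the cone vertex $\{p\}$, both $\cH^2(\partial E_n\cap \mathring M)$ and $\cH^2(\partial E_n\cap F_j)$ tend to zero, so $\cF(E_n)\to 0$. The substance of the lemma is to rule out $\cI=0$ by producing an explicit $E$ with $\cF(E)<0$, exploiting the strict inequality $\measuredangle_{ij}(M)<\measuredangle_{ij}(P)$.

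The plan is to localize to the tangent cone at $p$. Via the exponential map, identify a neighborhood of $p$ in $M$ with $T_pM\cong\RR^3$, and let $C_p\subset T_pM$ be the Euclidean polyhedral cone whose face half-planes are the tangent planes at $p$ to the $F_j$; by hypothesis, the dihedral angles of $C_p$ are strictly less than those of the Euclidean tangent cone $C^P$ of $P$. Define the Euclidean analog
\[
\cF_0(E_0) := \cH^2(\partial E_0\cap \mathring C_p)-\sum_j\cos\gamma_j\,\cH^2(\partial E_0\cap F_j^{C_p})
\]
for bounded $E_0\subset C_p$ containing the origin. A standard expansion of $g$ in geodesic normal coordinates at $p$ gives $\cF(\exp_p(sE_0))=s^2\cF_0(E_0)+O(s^4)$ as $s\to 0$, so it suffices to produce $E_0\subset C_p$ with $\cF_0(E_0)<0$. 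Taking $E_0$ to be the truncated cone bounded by a cross-section plane $\Pi$ with unit normal $\vec n$ pointing away from the apex, the divergence theorem applied to the constant field $\vec n$ yields $\cH^2(\partial E_0\cap \mathring C_p)=\sum_j\cos\alpha_j(\vec n)\,\cH^2(\partial E_0\cap F_j^{C_p})$, where $\alpha_j(\vec n)$ is the interior dihedral angle between $\Pi$ and $F_j^{C_p}$, so
\[
\cF_0(E_0)=\sum_j\bigl[\cos\alpha_j(\vec n)-\cos\gamma_j\bigr]\,\cH^2(\partial E_0\cap F_j^{C_p}).
\]
In the Euclidean model $C^P$, applying the divergence theorem instead with $\vec v=\vec b^P$ (using $\vec b^P\cdot \vec n_j^P=\cos\gamma_j$) yields the stronger identity $\cF_0(E_0)=\cH^2(\Sigma)(1+\vec b^P\cdot \vec n)\ge 0$, with equality only at $\vec n=-\vec b^P$---the degenerate equality underlying $\cI\le 0$.

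The main obstacle is to show that $\inf_{\vec n}\cF_0(E_0)$ is strictly negative in $C_p$. For $k=3$ side faces this is clean: the unique $\vec b\in\RR^3$ solving $\vec b\cdot\vec n_j^{C_p}=\cos\gamma_j$ has $|\vec b|>1$ under the dihedral-angle inequality, a direct Gram-matrix calculation on the face normals; applying the divergence theorem with this $\vec b$ then gives $\cF_0(E_0)=\cH^2(\Sigma)(1+\vec b\cdot \vec n)$, attaining $\cH^2(\Sigma)(1-|\vec b|)<0$ at $\vec n=-\vec b/|\vec b|$. For $k\ge 4$ the system $\vec b\cdot\vec n_j^{C_p}=\cos\gamma_j$ is overdetermined in $\vec b\in\RR^3$; one instead argues by a first-variation analysis, deforming the dihedral angles from $\theta_{ij}^P$ to $\theta_{ij}^M$ and tracking the sign of the corresponding variation of $\cF_0$ at the Euclidean optimum $\vec n=-\vec b^P$. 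This perturbation computation is the technically subtlest step of the proof.
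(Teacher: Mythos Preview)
Your reduction to the tangent cone at $p$ and the scaling/blow-up argument are correct and match the paper exactly; so does the divergence-theorem identity expressing $\cF_0$ of a truncated cone in terms of the contact angles $\alpha_j(\vec n)$. The gap is the $k\ge4$ case. You correctly note that the system $\vec b\cdot\vec n_j^{C_p}=\cos\gamma_j$ is overdetermined there, but the proposed fix---a ``first-variation analysis, deforming the dihedral angles from $\theta_{ij}^P$ to $\theta_{ij}^M$''---is not carried out, and it is not clear how to make it work: there is no canonical one-parameter family of convex polyhedral cones interpolating $C^P$ and $C_p$, and at the degenerate Euclidean optimum $\vec n=-\vec b^P$ the first variation of $\cF_0$ vanishes (as you yourself observe, equality holds there), so any perturbative argument would have to go to second order and control the sign. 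Even in the $k=3$ case you have not verified that $\vec n=-\vec b/|\vec b|$ actually yields a plane that cuts $C_p$ in a bounded cross-section.

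The paper bypasses this linear-algebra obstruction with a single geometric step that is uniform in $k$. Since every dihedral angle of $C_p$ is strictly less than the corresponding one of $C^P$, one can place $C_p$ (by a rigid motion) strictly inside $C^P$, sharing the apex. Now take $\Pi$ to be the base plane of the Euclidean model $P$ itself. By construction $\Pi$ meets each face of $C^P$ at angle exactly $\gamma_j$; since each face $F_j^{C_p}$ lies strictly inside the corresponding face of $C^P$, elementary Euclidean geometry gives that $\Pi$ meets $F_j^{C_p}$ at some angle $\gamma_j'>\gamma_j$. Your own formula then yields
\[
\cF_0(E_0)=\sum_j\bigl[\cos\gamma_j'-\cos\gamma_j\bigr]\,\cH^2(\partial E_0\cap F_j^{C_p})<0,
\]
and no auxiliary vector $\vec b$, Gram-matrix computation, or case distinction on $k$ is needed. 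The idea missing from your proposal is precisely this ``fit the thinner cone inside the model cone and reuse the model's own base plane'' step.
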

	
	\begin{proof}
		As before let $F_j$, $F_j'$ denote the side faces of $M$, $P$, respectively; $B$, $B'$ denote their base faces. Assume, for the sake of contradiction, that 
		\begin{equation}\label{proof.lemma.I.is.negative}
		\cH^2(\partial E\cap \mathring{M})-\sum_{j=1}^k(\cos \gamma_j)\cH^2(\partial E\cap F_j)\ge 0.
		\end{equation}
		
		Notice that the inequality \eqref{proof.lemma.I.is.negative} is scaling invariant. Precisely, if $E\subset M$ satisfies \eqref{proof.lemma.I.is.negative}, then under the homothety $\mu_r$ defined by $x\mapsto r(x-p)$, the set $(\mu_r)_{\#}(E)\subset (\mu_r)_{\#}(M)$ satisfies \eqref{proof.lemma.I.is.negative}. Letting $r\rightarrow \infty$, the tangent cone $T_p M$of $M$ at $p$ should share the same property. Let $F_{j,\infty}$ denote the corresponding faces in $T_p M$. By assumption, $\measuredangle(F_{j,\infty},F_{j+1,\infty})<\measuredangle(F_j',F_{j+1}')$. Therefore $T_p M$ can be placed strictly inside the tangent cone of $P$ at its vertex. By elementary Euclidean geometry, there exists a plane $\pi\subset \RR^3$ such that $\pi$ meets $F_{j,\infty}$ with angle $\gamma_j'>\gamma_j$. See Figure \ref{pic.euclidean.lemma.1} for an illustration, where the dashed polyhedral cone is $T_p M$.
		
		\begin{figure}[htbp]
			\centering
			\includegraphics{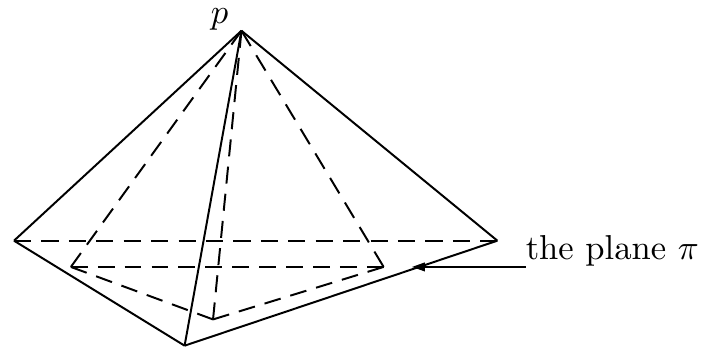}
			\caption{The tangent cone $T_p M$ contained in $P$.}
			\label{pic.euclidean.lemma.1}
		\end{figure}
		
		Let $\proj_{\pi}$ denote the projection $\RR^3\rightarrow \pi$. Then the Jacobian of $\proj_{\pi}$, restricted to each $F_{j,\infty}$, is $\cos \gamma_j'$. Denote $E_\infty$ the open domain enclosed by $\pi$ and $F_{j,\infty}$, $j=1,\cdots,k$. By the area formula,
		\[\cH^2(\pi\cap \partial E_\infty)-\sum_{j=1}^k(\cos \gamma_j')\cH^2(F_{j,\infty}\cap\partial E_\infty)=0.\]
		Since $\gamma_j'>\gamma_j$, we conclude
		\[\cH^2(\pi\cap \partial E_\infty)-\sum_{j=1}^k(\cos \gamma_j)\cH^2(F_{j,\infty}\cap\partial E_\infty)<0,\]
		contradiction.
	\end{proof}
	
	In the proof we are going to need another simple fact from Euclidean geometry.
	
	\begin{lemm}\label{lemma.Euclidean.geometry}
		Let $P_i,Q_i,R_i$, $i=1,2$, be six planes in $\RR^3$ with the property that $\measuredangle(P_1,R_1)=\measuredangle(P_2,R_2)$, $\measuredangle(Q_1,R_1)=\measuredangle(Q_2,R_2)$ and $\measuredangle(P_1,Q_1)\le \measuredangle(P_2,Q_2)$. Let $L_i=P_i\cap R_i$, $L'_i=Q_i\cap R_i$, $i=1,2$. Then $\measuredangle(L_1,L'_1)\le \measuredangle(L_2,L'_2)$.
	\end{lemm}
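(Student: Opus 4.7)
The plan is to derive the desired inequality from the spherical law of cosines, obtained via the Binet--Cauchy identity for cross products. I would pick unit normals $n_{P_i}, n_{Q_i}, n_{R_i}$ to the six planes (oriented so that the angles between the chosen vectors equal the dihedral angles of the statement), and write $\alpha := \measuredangle(P_1, R_1) = \measuredangle(P_2, R_2)$, $\beta := \measuredangle(Q_1, R_1) = \measuredangle(Q_2, R_2)$, and $\theta_i := \measuredangle(P_i, Q_i)$. The intersection line $L_i = P_i \cap R_i$ has direction $n_{P_i} \times n_{R_i}$ with length $\sin\alpha$, and similarly $L'_i = Q_i \cap R_i$ has direction $n_{Q_i} \times n_{R_i}$ with length $\sin\beta$.

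Applying the Binet--Cauchy identity
\[
(n_{P_i} \times n_{R_i}) \cdot (n_{Q_i} \times n_{R_i}) = n_{P_i} \cdot n_{Q_i} - (n_{P_i} \cdot n_{R_i})(n_{Q_i} \cdot n_{R_i})
\]
and dividing through by $\sin\alpha\sin\beta$ gives the spherical law of cosines
\[
\cos(\measuredangle(L_i, L'_i)) = \frac{\cos\theta_i - \cos\alpha\cos\beta}{\sin\alpha\sin\beta}.
\]
Since $\alpha$ and $\beta$ are fixed across $i=1,2$ and $\sin\alpha\sin\beta > 0$, the right-hand side is a strictly decreasing function of $\theta_i$. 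Hence the hypothesis $\theta_1 \le \theta_2$ forces $\cos(\measuredangle(L_1, L'_1)) \ge \cos(\measuredangle(L_2, L'_2))$, and strict monotonicity of $\cos$ on $[0, \pi]$ yields $\measuredangle(L_1, L'_1) \le \measuredangle(L_2, L'_2)$, as desired.

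The only item requiring care is the orientation of the normals so that the inner products $n_\cdot \cdot n_\cdot$ equal the cosines of the dihedral angles (rather than of their supplements). Since this lemma is used inside a polyhedron — where each plane plays the role of a face bounding a half-space — one may consistently orient, say, along the outward normal, and all the angle identifications go through. Alternatively, the same computation works after sign-tracking if the angles are taken in $[0, \pi/2]$ with absolute values. Beyond this bookkeeping, the argument reduces to a single vector identity, and I do not anticipate any deeper obstacle.
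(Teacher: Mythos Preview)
Your argument is correct and is essentially the same as the paper's: the paper likewise takes unit normals to the six planes, computes the dot product of the cross products (i.e., applies the Binet--Cauchy identity) to relate $\cos\measuredangle(L_i,L'_i)$ to $\cos\theta_i$ with the fixed $\alpha,\beta$, and concludes by monotonicity of cosine. The only cosmetic difference is that the paper orients the $P$- and $Q$-normals outward from the wedge (so $\uu_i\cdot\vv_i=-\cos\theta_i$), whereas you orient them so that the inner products equal $+\cos\theta_i$; your final paragraph already flags this as the one piece of bookkeeping, and it matches exactly what the paper does.
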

	
	\begin{proof}
		The proof is very similar to that of Lemma \ref{lemma.Euclidean.angle.condition}. Let $\uu_i,\vv_i,\ww_i$ be the unit normal vectors of $P_i,Q_i,R_i$, $i=1,2$, with the same choice of orientation, and such that $\uu_i,\vv_i$ are pointing out of the wedge region formed by $P_i$ and $Q_i$. Denote $\gamma_1=\measuredangle(P_1,R_1)=\measuredangle(P_2,R_2)$, $\gamma_2=\measuredangle(Q_1,R_1)=\measuredangle(Q_2,R_2)$, $\theta=\measuredangle(P_1,Q_1)$, and $\theta'=\measuredangle(P_2,Q_2)$. Then $\measuredangle(L_i,L_i')$ is given by the angle between $\uu_i\times \ww_i$ and $\ww_i\times \vv_i$, $i=1,2$. Notice that for $i=1,2$, $|\uu_i\times \ww_i|=\sin\gamma_1$, and $|\ww_i\times \vv_i|=\sin\gamma_2$. By assumptions, we have that
		\[\uu_1\cdot \vv_1=-\cos\theta,\quad \uu_2\cdot \vv_2=-\cos\theta',\quad \uu_i\cdot \ww_i=\cos \gamma_1,\quad \vv_i\cdot \ww_i=\cos\gamma_2.\]
		
		Therefore
		\[(\uu_1\times \ww_1)\cdot (\ww_1\times \vv_1)=\cos\gamma_1\cos\gamma_2+\cos\theta'.\]
		Hence if $\theta'\le \theta$, $(\uu_1\times \ww_1)\cdot (\ww_1\times \vv_1)\ge (\uu_2\times \ww_2)\cdot (\ww_2\times \vv_2)$. This implies that $\measuredangle(L_1,L'_1)\le \measuredangle(L_2,L'_2)$.
	\end{proof}
	
	Now we prove Theorem \ref{theorem.comparison.nonrigid}.
	
	\begin{proof}
		Assume, for the sake of contradiction, that $\measuredangle_{ij}(M)<\measuredangle_{ij}(P)$. By Theorem \ref{theorem.existence.regularity} and Lemma \ref{lemma.I.is.negative}, the infimum in \eqref{capillary.variational.problem} is achieved by an open set $E$, with $\Sigma=\partial E\cap \mathring{M}$ a smooth surface which is $C^{1,\alpha}$ up to its corners for some $\alpha\in (0,1)$. By the first variation formula \eqref{first.variation.for.regular.surfaces}, $\Sigma$ is capillary minimal. We apply the second variational formula \eqref{second.variation.formula} and conclude
		\begin{equation}
		\int_{\Sigma}[|\nabla f|^2-(|A|^2+\Ric(N,N))f^2]d\cH^2-\int_{\partial \Sigma}Q f^2 d\cH^1\ge 0,
		\end{equation}
		for any $C^2$ function $f$ compactly supported away from the corners, where on $\partial\Sigma\cap F_j$, 
		\[Q=\frac{1}{\sin \gamma_j}\secondfund(\overline{\nu},\overline{\nu})+(\cot \gamma_j) A(\nu,\nu).\]
		
		Since the surface $\Sigma$ is $C^{1,\alpha}$ to its corners, its curvature $|A|$ is square integrable. Hence by a standard approximation argument we conclude that the above inequality holds for the constant function $f=1$. We have
		\begin{multline}\label{second.variation.with.1}
		-\int_\Sigma (|A|^2+\Ric(N,N))-\\
		\sum_{j=1}^n\int_{\partial \Sigma\cap F_j}\left[\frac{1}{\sin \gamma_j}\secondfund(\overline{\nu},\overline{\nu})+\cot\gamma_j A(\nu,\nu)\right]\ge 0.
		\end{multline}
		
		Applying the Gauss equation on $\Sigma$, we have
		\begin{equation}\label{Gauss.equation}
		|A|^2+\Ric(N,N)=\frac{1}{2}(R-2K_\Sigma+|A|^2),    
		\end{equation}
		where $R$ is the scalar curvature of $M$, $K_\Sigma$ is the Gauss curvature of $\Sigma$. 
		
		By the Gauss-Bonnet formula for $C^{1,\alpha}$ surfaces with piecewise smooth boundary components, we have that
		\begin{equation}\label{Gauss.bonnet}
		\int_\Sigma K_\Sigma d\cH^2+\int_{\partial \Sigma}k_g d\cH^1+\sum_{j=1}^n(\pi-\alpha_j)= 2\pi\chi(\Sigma)\le 2\pi,
		\end{equation}
		here $k_g$ is the geodesic curvature of $\partial \Sigma\subset \Sigma$, and $\alpha_j$ are the interior angles of $\Sigma$ at the corners. By Lemma \ref{lemma.Euclidean.geometry}, $\alpha_j<\alpha_j'$, where $\alpha_j'$ is the corresponding interior angle of the base face of the Euclidean polyhedron $P$. Since $\sum_{j=1}^k (\pi-\alpha_j')=2\pi$, we conclude $\sum_{j=1}^k(\pi-\alpha_j)>2\pi$. As a result, we have that
		\begin{equation}\label{corollary.of.gauss.bonnet}
		-\int_\Sigma K_\Sigma d\cH^2> \int_{\partial \Sigma}k_g d\cH^1.
		\end{equation}
		
		Combining \eqref{second.variation.with.1}, \eqref{Gauss.equation} and \eqref{corollary.of.gauss.bonnet} we conclude that
		\begin{multline}\label{eq.to.simplify}
		\int_\Sigma\frac{1}{2}\left(R+|A|^2\right)d\cH^2\\
		+\sum_{j=1}^n\int_{\partial \Sigma\cap F_j}\left[\frac{1}{\sin \gamma_j}\secondfund(\overline{\nu},\overline{\nu})+\cot\gamma_j A(\nu,\nu)+k_g \right]d\cH^1< 0.
		\end{multline}
		
		To finish the proof, let us analyze the last integrand in \eqref{eq.to.simplify}. Fix one $j$ and consider $\partial \Sigma\cap F_j$. For convenience let $\gamma=\gamma_j$. We make the following
		
		\begin{claim}
			\begin{equation}\label{boundary.geometric.equation}
			\secondfund(\overline{\nu},\overline{\nu})+\cos \gamma A(\nu,\nu)+\sin \gamma k_g=\overline{H},
			\end{equation}
			where $\overline{H}$ is the mean curvature of $\partial M$ in $M$.
		\end{claim}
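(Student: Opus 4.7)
The plan is to verify the identity pointwise along $\partial\Sigma\cap F_j$ by computing $\overline{H}$ as the trace of $\secondfund$ over an orthonormal frame of $T_p\partial M$ adapted to the boundary curve $\partial\Sigma$, and then using the capillary contact angle to convert between the two orthonormal frames $\{X,\overline{\nu}\}$ and $\{N,\nu\}$ of the $2$-plane $P=(T_p\partial\Sigma)^\perp\subset T_p M$.

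First I would fix $p\in\partial\Sigma\cap F_j$ and let $T$ denote a unit tangent to $\partial\Sigma$ at $p$. Then $\{T,\overline{\nu}\}$ is an orthonormal basis of $T_p\partial M$ and $\{T,\nu\}$ is an orthonormal basis of $T_p\Sigma$; in particular $\{X,\overline{\nu}\}$ and $\{N,\nu\}$ are both orthonormal bases of $P$. The capillary condition $\langle N,X\rangle=\cos\gamma=\langle\nu,\overline{\nu}\rangle$, combined with the orientation conventions for $N$ (into $E$), $X$ (outward of $M$), $\nu$ (outward of $\Sigma$ in $\Sigma$), and $\overline{\nu}$ (outward of $E$ along $\partial M$), pins down the change of basis
$$X=\sin\gamma\cdot\nu+\cos\gamma\cdot N.$$

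The main computation is short. The sign convention used in the paper (which forces $\overline{H}=2$ on the unit sphere) dictates $\secondfund(T,T)=-\langle\nabla_T T,X\rangle$, i.e., contracting with the inward normal $-X$. Substituting the above decomposition of $X$, and recalling that $A(T,T)=\langle\nabla_T T,N\rangle$ and $k_g=-\langle\nabla_T T,\nu\rangle$ (with the sign of $k_g$ chosen so a convex planar curve has $k_g>0$ with outward $\nu$), one obtains
$$\secondfund(T,T)=\sin\gamma\,k_g-\cos\gamma\,A(T,T).$$
Since $\Sigma$ is capillary minimal, $A(T,T)+A(\nu,\nu)=H_\Sigma=0$, so $-A(T,T)=A(\nu,\nu)$. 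Adding $\secondfund(\overline{\nu},\overline{\nu})$ to both sides and using $\overline{H}=\secondfund(T,T)+\secondfund(\overline{\nu},\overline{\nu})$ gives the claimed identity.

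The only delicate step is pinning down the sign in $X=\sin\gamma\,\nu+\cos\gamma\,N$: once the four orientation conventions have been tracked consistently, the rest reduces to two one-line computations together with the minimality of the capillary surface $\Sigma$.
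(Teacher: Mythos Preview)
Your proof is correct and follows essentially the same route as the paper's own argument: both decompose $X=\cos\gamma\,N+\sin\gamma\,\nu$, compute $\secondfund(T,T)=-\langle\nabla_T T,X\rangle=\sin\gamma\,k_g-\cos\gamma\,A(T,T)$, invoke minimality of $\Sigma$ to replace $-A(T,T)$ by $A(\nu,\nu)$, and then add $\secondfund(\overline{\nu},\overline{\nu})$ and use $\overline{H}=\secondfund(T,T)+\secondfund(\overline{\nu},\overline{\nu})$. The only cosmetic difference is that the paper runs the identity in the other direction, starting from $\cos\gamma\,A(\nu,\nu)+\sin\gamma\,k_g$ and arriving at $\secondfund(T,T)$.
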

		Let $T$ be the unit tangential vector of $\partial \Sigma$. Since $\Sigma$ is minimal, $A(\nu,\nu)=-A(T,T)$. Therefore 
		\begin{align*}
		\cos\gamma A(\nu,\nu)+\sin\gamma k_g&=-\cos\gamma A(T,T)+\sin\gamma k_g\\
		&=-\bangle{\nabla_T T,\cos\gamma N+\sin\gamma \nu}\\
		&=-\bangle{\nabla_T T,X}\\
		&=\secondfund(T,T).
		\end{align*}
		Since $T$ and $\overline{\nu}$ form an orthonormal basis of $\partial M$, we have
		\begin{equation*}
		\secondfund(\overline{\nu},\overline{\nu})+\cos \gamma A(\nu,\nu)+\sin \gamma k_g=\secondfund(T,T)+\secondfund(\overline{\nu},\overline{\nu})=\overline{H}.
		\end{equation*}
		The claim is proved.
		
		To finish the proof, we note that \eqref{eq.to.simplify} implies that
		\begin{equation}
		\int_\Sigma\frac{1}{2}\left(R+|A|^2\right)d\cH^2+\sum_{j=1}^n\int_{\partial \Sigma\cap F_j}\frac{1}{\sin \gamma_j}\overline{H} d\cH^1< 0,
		\end{equation}
		contradicting the fact that the scalar curvature $R$ of $M$ and the surface mean curvature $\overline{H}$ of $\partial M\subset M$ are nonnegative.
	\end{proof}

	\section{Rigidity}
	In this section we prove Theorem \ref{theorem.rigidity}. Rigidity properties of minimal and area-minimizing surfaces have attracted lots of interests in recent years. Following the Schoen-Yau proof of the positive mass theorem, Cai-Galloway \cite{CaiGalloway00rigidity} studied the rigidity of area-minimizing tori in three-manifolds in nonnegative scalar curvature. The case of area-minimizing spheres was carried out by Bray-Brendle-Neves \cite{BrayBrendleNeves10rigidity}. Their idea is to study constant mean curvature (CMC) foliation around an infinitesimally rigid area-minimizing surface, and obtain a local splitting result for the manifold. It is very robust and applies to a wide variety of rigidity analysis: in the case of negative \cite{Nunes13rigidity} scalar curvature, and for area-minimizing surfaces with boundary \cite{Ambrozio15rigidity} (see also \cite{MicallefMoraru15splitting}). We adapt their idea for our rigidity analysis, and perform a dynamical analysis for foliations with constant mean curvature capillary surfaces. The new challenge here is that, when $M$ is of cube type, the energy minimizer of \eqref{capillary.variational.problem} may be empty. In this case the tangent cone $T_p M$ coincides with that of the Euclidean model $P$, and $\cI=0$. Our strategy, motivated by the earlier work of Ye \cite{ye1991foliation}, is to construct a constant mean curvature foliation near the vertex $p$, such that the mean curvature on each leaf converges to zero when approaching $p$.
	
	\subsection{Infinitesimally rigid minimal capillary surfaces}
	Assume the energy minimizer $\Sigma=\partial E\cap \mathring{M}$ exists for \eqref{capillary.variational.problem}. Tracing equality in the proof in Section 3, we conclude that
	\begin{equation}\label{rigidity.equation.1}
	\begin{gathered}
	\chi(\Sigma)=0,\quad R_M=0,\quad |A|=0\quad \text{on $\Sigma$} \\
	\overline{H}=0\quad \text{on $\partial \Sigma$},\qquad \alpha_j=\alpha_j'\quad \text{at the corners of $\Sigma$}.
	\end{gathered}
	\end{equation}
	Moreover, by the second variation formula \eqref{second.variation.formula},
	\begin{multline*}
	Q(f,f)=-\int_{\Sigma}(f\Delta f+(|A|^2+\Ric(N,N))f^2)d\cH^{n-1}\\
	+\sum_{j=1}^k\int_{\partial\Sigma\cap F_j}f\left(\pa{f}{\nu}-Q f \right)d\cH^{n-2}\ge 0,
	\end{multline*}
	with $Q(1,1)=0$. We then conclude that for any $C^2$ function $f$ compactly supported away from the vertices of $\Sigma$, $Q(1,f)=0$. By choosing appropriate $g$, we further conclude that
	\begin{equation*}
	\Ric(N,N)=0\quad \text{on $\Sigma$},\quad \frac{1}{\sin\gamma_j}\secondfund(\overline{\nu},\overline{\nu})+\cot\gamma_j A(\nu,\nu)=0\quad \text{on $\partial\Sigma \cap F_j$}.
	\end{equation*}
	Combining with \eqref{Gauss.equation} and \eqref{boundary.geometric.equation}, we conclude that
	\begin{equation}\label{rigidity.equation.2}
	K_\Sigma=0 \quad \text{on $\Sigma$},\quad k_g=0\quad\text{on $\partial\Sigma$}.
	\end{equation}
	
	Call a surface $\Sigma$ satisfying \eqref{rigidity.equation.1} and \eqref{rigidity.equation.2} \textit{infinitesimally rigid}. Notice that such a surface is isometric to an flat $k$-polygon in $\RR^2$.
	
	Next, we construct a local foliation by CMC capillary surfaces $\Sigma_t$. Take a vector field $Y$ defined in a neighborhood of $\Sigma$, such that $Y$ is tangential when restricted to $\partial M$. Let $\phi(x,t)$ be the flow of $Y$. Precisely, we have:
	
	\begin{prop}\label{prop.foliation.around.a.minimizer}
		Let $\Sigma^2$ be a properly embedded, two-sided, minimal capillary surface in $M^3$. If $\Sigma$ is infinitesimally rigid, then there exists $\ep>0$ and a function $w:\Sigma\times (-\ep,\ep)\rightarrow\RR$ such that, for every $t\in (-\ep,\ep)$, the set
		\[\Sigma_t=\{\phi(x,w(x,t):x\in\Sigma)\}\]
		is a capillary surface with constant mean curvature $H(t)$ that meets $F_j$ at constant angle $\gamma_j$. Moreover, for every $x\in \Sigma$ and every $t\in (-\ep,\ep)$,
		\[w(x,0)=0,\quad \int_\Sigma(w(x,t)-t)d\cH^2=0 \quad \text{and}\quad\pa{}{t}w(x,t)\bigg\vert_{t=0}=1.\]
		Thus, by possibly choosing a smaller $\ep$, $\{\Sigma_t\}_{t\in(-\ep,\ep)}$ is a foliation of a neighborhood of $\Sigma_0=\Sigma$ in $M$.
	\end{prop}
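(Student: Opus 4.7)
The plan is to apply the implicit function theorem to a nonlinear map between Hölder spaces over $\Sigma$. Parametrize nearby surfaces by $\Sigma_w := \{\phi(x,w(x)) : x \in \Sigma\}$ for $w$ small in $C^{2,\alpha}(\Sigma)$. Since $Y$ is tangent to $\partial M$, the boundary $\partial \Sigma_w$ automatically stays on $\partial M$, and both the contact angle $\theta_j(w)$ along $\partial \Sigma \cap F_j$ and the mean curvature $H(\Sigma_w)$ depend smoothly on $w$ and its derivatives. I would define
\begin{equation*}
\Phi(w,t) = \Bigl(H(\Sigma_w) - \tfrac{1}{\cH^2(\Sigma)}\textstyle\int_\Sigma H(\Sigma_w)\,d\cH^2,\ \theta_j(w)-\gamma_j,\ \tfrac{1}{\cH^2(\Sigma)}\textstyle\int_\Sigma w\,d\cH^2 - t\Bigr),
\end{equation*}
whose zeros simultaneously encode constant mean curvature, the capillary boundary condition, and the normalization built into the statement.

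The crucial step is to verify that $D_w\Phi|_{(0,0)}$ is an isomorphism. The standard linearizations give the Jacobi operator $L_\Sigma u = \Delta u + (|A|^2 + \Ric(N,N))u$ in the interior and the oblique-derivative operator $\partial_\nu u - Qu$ along $\partial\Sigma \cap F_j$, with $Q = \tfrac{1}{\sin\gamma_j}\secondfund(\overline\nu,\overline\nu) + \cot\gamma_j A(\nu,\nu)$ as in \eqref{second.variation.formula}. The infinitesimal rigidity relations force $|A|\equiv 0$, $\Ric(N,N)\equiv 0$, $\secondfund(\overline\nu,\overline\nu)\equiv 0$, and $A(\nu,\nu)\equiv 0$, so the linearized system collapses to the pure Neumann problem $u \mapsto (\Delta u, \partial_\nu u)$ together with the normalization. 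Its kernel consists of the constants on $\Sigma$, which are annihilated by the normalization; Fredholm theory of index zero then upgrades injectivity to invertibility.

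The main technical obstacle is elliptic regularity at the corners of $\Sigma$ where adjacent side faces $F_j, F_{j+1}$ meet: since $\Sigma$ is only $C^{1,\alpha}$ there, classical Schauder theory does not directly apply. I would work in the weighted corner Hölder spaces from Lieberman \cite{Lieberman88Holder} already invoked in Section 2.3, exploiting the rigidity identity $\alpha_j = \alpha_j'$: the interior angles of $\Sigma$ at its vertices exactly match those of the flat Euclidean model, and this is precisely the compatibility condition that renders the mixed Neumann problem well posed with $C^{2,\alpha}$ solvability up to the corners. This yields boundedness and invertibility of $D_w\Phi|_{(0,0)}$ between the corresponding Banach spaces.

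Applying the implicit function theorem then produces a smooth family $t \mapsto w(\cdot,t)$ with $w(\cdot,0)=0$ solving $\Phi(w,t)=0$; the third component of $\Phi$ directly gives $\int_\Sigma (w(\cdot,t)-t)\,d\cH^2 = 0$. Differentiating $\Phi(w(\cdot,t),t)=0$ at $t=0$ shows that $v := \partial_t w|_{t=0}$ satisfies $\Delta v = \mathrm{const}$, $\partial_\nu v = 0$, with $\tfrac{1}{\cH^2(\Sigma)}\int v = 1$; the kernel analysis above forces $v \equiv 1$. Hence the initial velocity of the family is $Y$, transverse to $\Sigma$, and for $\ep$ sufficiently small the map $(x,t) \mapsto \phi(x,w(x,t))$ is a diffeomorphism onto a tubular neighborhood of $\Sigma$, yielding the claimed foliation.
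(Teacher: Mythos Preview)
Your proposal is correct and follows essentially the same implicit/inverse function theorem strategy as the paper: both reduce the linearization, via infinitesimal rigidity, to the pure Neumann Laplacian on the flat polygon $\Sigma$ and invoke Lieberman's elliptic theory on cornered domains for invertibility. The only cosmetic differences are that the paper builds the normalization $\int_\Sigma u = 0$ into the domain Banach space rather than carrying it as a third equation, and it appeals to Lieberman's 1989 Schauder estimate (Theorem~\ref{theorem.elliptic.equation.over.cornered.domains}), which needs only that the interior angles of $\Sigma$ be less than~$\pi$ and returns solutions in $C^{2,\alpha}(\Sigma)\cap C^{1,\alpha}(\overline\Sigma)$---so your remark that $\alpha_j=\alpha_j'$ is ``precisely the compatibility condition'' for $C^{2,\alpha}$ solvability up to the corners is stronger than what is actually used or obtained.
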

	
	Our proof goes by an argument involving the inverse function theorem, and is essentially taken from \cite{BrayBrendleNeves10rigidity} and \cite{Ambrozio15rigidity}. We do, however, need an elliptic theory on cornered domains. This is done by Lieberman \cite{Lieberman89optimal}. The following Schauder estimate is what we need:
	
	\begin{theo}[Lieberman,\cite{Lieberman89optimal}]\label{theorem.elliptic.equation.over.cornered.domains}
		Let $\Sigma^2\subset \RR^3$ be an open polygon with interior angles less than $\pi$. Let $L_1,\cdots,L_k$ be the edges of $\Sigma$. Then there exists some $\alpha>0$ depending only on the interior angles of $\Sigma$, such that if $f\in C^{0,\alpha}(\overline{\Sigma})$, $g\vert_{\overline{L_j}}\in C^{0,\alpha}(\overline{L_j})$, then the Neumann boundary problem
		\begin{equation}
		\begin{cases}
		\Delta u=f\quad \text{in $\Sigma$}\\
		\pa{u}{\nu}=g\quad \text{on $\partial \Sigma$}
		\end{cases}
		\end{equation}
		has a solution $u$ with $\int_\Sigma u=0$, and $u\in C^{2,\alpha}(\Sigma)\cap C^{1,\alpha}(\overline{\Sigma})$. Moreover, the Schauder estimate holds:
		\[|u|_{2,\alpha,\Sigma}+|u|_{1,\alpha,\overline{\Sigma}}\le C(|f|_{0,\alpha,\overline{\Sigma}}+
		\sum_{j=1}^k |g|_{0,\alpha,L_j}).\]
	\end{theo}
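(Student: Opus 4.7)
The plan is to separate three issues: existence of a Neumann solution, the Schauder estimate at interior and smooth boundary points (classical), and the Schauder estimate at the corners, where the novelty lies. Uniqueness of the solution normalized by $\int_\Sigma u = 0$ is immediate from integration by parts, so the content reduces to existence and the global H\"older estimate. My strategy is a three-tier analysis, with only the third tier requiring genuinely new input.

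For existence I would assume the usual compatibility condition $\int_\Sigma f = \int_{\partial \Sigma} g$ and set up the problem weakly in $H^1(\Sigma)/\RR$. The Dirichlet form is coercive there by the Poincar\'e inequality on the bounded Lipschitz domain $\Sigma$, the linear functional $v \mapsto -\int_\Sigma fv + \int_{\partial \Sigma} gv$ is bounded by the trace theorem and annihilates constants, so Lax--Milgram gives a weak solution $u$ with $\int_\Sigma u = 0$. Standard interior Schauder theory upgrades it to $u \in C^{2,\alpha}_{\mathrm{loc}}(\Sigma)$ with the usual estimate in terms of $|f|_{0,\alpha}$. At the smooth portion of each edge $L_j$, flattening the boundary and invoking the classical Agmon--Douglis--Nirenberg Schauder estimate for the oblique-derivative problem yields $C^{1,\alpha}$ regularity with the edge-wise bound $|u|_{1,\alpha,L_j \cap V} \le C(|f|_{0,\alpha} + |g|_{0,\alpha,L_j})$ on any $V \subset \overline{\Sigma}$ compactly away from the corners.

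The heart of the matter, and the main obstacle, is the regularity at the corners, where standard elliptic theory breaks down because $\partial \Sigma$ fails to be $C^1$. I would localize to a vertex $v$ with opening angle $\theta < \pi$, flatten the two adjacent edges by a bi-Lipschitz change of coordinates, and freeze coefficients at $v$ to reduce to the model problem $\Delta u_0 = 0$ in $W_\theta$ with $\partial u_0/\partial \nu = 0$ on $\partial W_\theta$, where $W_\theta$ is the planar wedge of opening $\theta$. Separation of variables in polar coordinates gives the explicit basis $r^{k\pi/\theta}\cos(k\pi\phi/\theta)$ for $k = 0, 1, 2, \ldots$, so every bounded homogeneous Neumann-harmonic function on $W_\theta$ is a constant plus a superposition of modes of order at least $r^{\pi/\theta}$. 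Because $\theta < \pi$ forces $\pi/\theta > 1$, this immediately gives $C^{1,\alpha}$ regularity at the model level for any $\alpha < \pi/\theta - 1$.

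To pass from the frozen model to the actual equation I would run a Campanato-type comparison argument on dyadic annuli $B_{2^{-k}}(v) \setminus B_{2^{-k-1}}(v)$. At each scale, $u$ is compared with its best Neumann-harmonic approximation on the frozen wedge, and the lower-order perturbations coming from the freezing of coefficients and from the H\"older-continuous inhomogeneities $f$ and $g$ are absorbed at a controlled rate, yielding a geometric decay for the mean oscillation of $\nabla u$ at $v$. By Campanato's characterization of H\"older spaces, this translates into $\nabla u \in C^{0,\alpha}$ up to the vertex, with the stated estimate; the exponent $\alpha$ depends only on $\pi/\theta_{\min} - 1$, hence only on the interior angles of $\Sigma$, as asserted. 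Piecing the interior, smooth-edge, and corner bounds together by a finite partition of unity produces the global Schauder inequality $|u|_{2,\alpha,\Sigma} + |u|_{1,\alpha,\overline{\Sigma}} \le C\bigl(|f|_{0,\alpha,\overline{\Sigma}} + \sum_{j} |g|_{0,\alpha,L_j}\bigr)$.
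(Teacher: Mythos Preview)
The paper does not give its own proof of this statement: it is quoted verbatim as a result of Lieberman \cite{Lieberman89optimal} and used as a black box in the construction of the CMC capillary foliation. There is therefore no ``paper's proof'' to compare against.

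That said, your outline is a reasonable and essentially correct sketch of how such a result is proved, and it is in the spirit of Lieberman's approach: weak existence by Lax--Milgram under the compatibility condition (which you correctly flag, and which is implicit in the theorem's use in the paper since the right-hand sides come from linearizing a variational problem), interior and smooth-boundary Schauder theory for the bulk, and at each vertex a comparison with the explicit Neumann spectrum $r^{k\pi/\theta}\cos(k\pi\phi/\theta)$ on the model wedge, whose first nontrivial mode has order $\pi/\theta>1$ precisely because the interior angles are below $\pi$. The Campanato iteration you describe is one clean way to pass from the frozen model to the variable-coefficient problem; Lieberman's paper phrases things somewhat differently (via barrier/comparison arguments and weighted H\"older spaces adapted to the corners), but the mechanism---that convexity of the angle forces the leading homogeneous solution to be better than Lipschitz---is the same.
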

	
	We now prove Proposition \ref{prop.foliation.around.a.minimizer}.
	
	\begin{proof}
		For a function $u\in C^{2,\alpha}(\Sigma)\cap C^{1,\alpha}(\overline{\Sigma})$, consider the surface $\Sigma_u=\{\phi(x,u(x)):x\in \Sigma\}$, which is properly embedded if $|u|_0$ is small enough. We use the subscript $u$ to denote the quantities associated to $\Sigma_u$. For instance, $H_u$ denotes the mean curvature of $\Sigma_u$, $N_u$ denotes the unit normal vector field of $\Sigma_u$, and $X_u$ denotes the restriction of $X$ onto $\Sigma_u$. Then $\Sigma_0=\Sigma$, $H_0=0$, and $\bangle{N_u,X_u}=\cos\gamma_j$ along $\partial\Sigma\cap F_j$.
		
		Consider the Banach spaces
		\begin{equation*}
		\begin{gathered}
		F=\left\{u\in C^{2,\alpha}(\Sigma)\cap C^{1,\alpha}(\overline{\Sigma}):\int_\Sigma u=0\right\},\\
		G=\left\{u\in C^{0,\alpha}(\Sigma):\int_\Sigma u=0\right\},\quad H=\left\{u\in L^{\infty}(\partial \Sigma): u\vert_{\overline{L_j}} \in C^{0,\alpha}(\overline{L_j})\right\}.
		\end{gathered}
		\end{equation*}
		Given small $\delta>0$ and $\ep>0$, define the map $\Psi:(-\ep,\ep)\times (B_0(\delta)\subset F)\rightarrow G\times H$ given by
		\[\Psi(t,u)=\left(H_{t+u}-\frac{1}{|\Sigma|}\int_\Sigma H_{t+u},\bangle{N_{t+u},X_{t+u}}-\cos\gamma\right),\]
		where $\gamma=\gamma_j$ on $\partial\Sigma\cap \mathring{F_j}$.
		
		In order to apply the inverse function theorem, we need to prove that $D_u\Psi\vert_{(0,0)}$ is an isomorphism when restricted to $\{0\}\times F$. In fact, for any $v\in F$, 
		\[D_u\Psi\vert_{(0,0)}(0,v)=\td{}{s}\bigg\vert_{s=0}\psi(0,sv)=\left(\Delta v-\frac{1}{|\Sigma|}\int_{\partial \Sigma}\pa{v}{\nu},-\pa{v}{\nu}\right).\]
		
		The calculation is given in Lemma \ref{lemma.appendix.2} and Lemma \ref{lemma.appendix.3} in the appendix. Now the fact that $D_u\Psi\vert_{(0,0)}$ is an isomorphism follows from Theorem \ref{theorem.elliptic.equation.over.cornered.domains}. The rest of the proof is the same as Proposition 10 in \cite{Ambrozio15rigidity}, which we will omit here.
	\end{proof}

	\subsection{CMC capillary foliation near the vertex}
	When $(M^3,g)$ is of cone type with vertex $p$, we have proved that $\cI$ is realized by a minimizer $\partial E\ne\emptyset$ when $\cI<0$. Now it is obvious from the definition that $\cI\le 0$. However, in the case that $\cI=0$, it is a priori possible that the minimizer $E=\emptyset$. Assume $\cI=0$. We investigate this case with a different approach.
	
	Notice that, as a consequence of Lemma \ref{lemma.I.is.negative}, $\cI=0$ implies that
	\[\measuredangle(F_j,F_{j+1})\vert_p=\measuredangle(F_j',F_{j+1}'),\]
	where $F_j'$ is the corresponding face of the Euclidean model $P$. Recall that in the Euclidean model $P'$, its base face $B'$ intersects $F_j'$ at angle $\gamma_j$. Thus $P$ is foliated by a family of planes parallel to $B'$, where each leaf is minimal, and meets $F_j'$ at constant angle $\gamma_j$. We generalize this observation to arbitrary Riemannian polyhedra, and obtain:
	
	\begin{theo}\label{theorem.foliation.near.a.vertex}
		Let $(M^3,g)$ be a cone type Riemannian polyhedron with vertex $p$. Let $P\subset \RR^3$ be a polyhedron with vertex $p'$, such that the tangent cones $(T_p M,g_p)$ and $(T_{p'} P,g_{Euclid})$ are isometric. Denote $\gamma_1,\cdots,\gamma_k$ the angles between the base face and the side faces of $P$. Then there exists a small neighborhood $U$ of $p$ in $M$, such that $U$ is foliated by surfaces $\{\Sigma_\rho\}_{\rho\in (0,\ep)}$ with the properties that:
		\begin{enumerate}
			\item for each $\rho\in (0,\ep)$, $\Sigma_\rho$ meet the side face $F_j$ at constant angle $\gamma_j$;
			\item each $\Sigma_\rho$ has constant mean curvature $\lambda_\rho$, and $\lambda_\rho\rightarrow 0$ as $\rho\rightarrow 0$.
		\end{enumerate}
	\end{theo}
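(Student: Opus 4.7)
The plan is to build the CMC capillary foliation via a parameterized implicit function theorem, extending the strategy of Proposition \ref{prop.foliation.around.a.minimizer} to the degenerate setting at the vertex, following the approach of Ye \cite{ye1991foliation} for CMC foliations by small shrinking surfaces. The crucial observation is that in the Euclidean cone $T_{p'}P$, the family of parallel dilates of the base face $B$ already provides a smooth minimal capillary foliation meeting each side face at angle $\gamma_j$; under the hypothesis $(T_pM,g_p) \cong (T_{p'}P, g_{Euclid})$, this model foliation is the natural Gromov-Hausdorff limit of the target foliation in $(M,g)$. For each small $\rho > 0$ I would rescale by setting $(M_\rho, g_\rho) = (\rho^{-1}(M-p), \rho^{-2}g)$, so that $(M_\rho, g_\rho) \to (T_pM, g_p)$ pointed-Gromov-Hausdorff, and smoothly on compact subsets of the regular part. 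The unit-height slice $B_1 \subset T_pM$ is then a fixed flat polygonal disk, independent of $\rho$, meeting each limit face $F_{j,\infty}$ at angle $\gamma_j$, and it serves as the common model at every scale.

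Mirroring the construction in Proposition \ref{prop.foliation.around.a.minimizer}, I define
\[
\Psi_\rho(u) = \left(H^\rho_u - \frac{1}{|B_1|}\int_{B_1} H^\rho_u,\;\; \bangle{N^\rho_u, X^\rho_u} - \cos\gamma\right),
\]
for $u$ in the cornered Hölder space $F$ of zero-mean functions on $B_1$, where $H^\rho_u$, $N^\rho_u$, $X^\rho_u$ are the geometric quantities in $g_\rho$ for the graph of $u$ over $B_1$. At the Euclidean limit $\rho=0$, $\Psi_0(0) = 0$, and its $u$-linearization is
\[
D_u\Psi_0|_0(v) = \left(\Delta_{flat} v - \frac{1}{|B_1|}\int_{\partial B_1}\partial_\nu v,\; -\partial_\nu v\right),
\]
which is an isomorphism on the zero-mean subspace by Lieberman's Schauder theory on cornered domains (Theorem \ref{theorem.elliptic.equation.over.cornered.domains}). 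The Schauder constants are uniform in $\rho$ since the corner angles of $B_1$ are fixed by the tangent-cone assumption. Because $\Psi_\rho \to \Psi_0$ in operator norm, a quantitative IFT yields, for each sufficiently small $\rho$, a unique small $u_\rho$ with $\Psi_\rho(u_\rho) = 0$, giving a CMC capillary surface $\tilde\Sigma_\rho \subset M_\rho$ with constant mean curvature $\tilde\lambda_\rho$. Unscaling produces $\Sigma_\rho \subset M$ with $\lambda_\rho = \rho^{-1}\tilde\lambda_\rho$, and the foliation property follows from transversality inherited from the vertical translation in the Euclidean cone, where the unperturbed family $\{B_t\}$ foliates transversally.

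The hard part will be upgrading the naive bound $\tilde\lambda_\rho = O(\rho)$ to $\tilde\lambda_\rho = o(\rho)$, which is needed to conclude $\lambda_\rho \to 0$. From $\Psi_\rho(u_\rho) = 0$ one reads off that $\tilde\lambda_\rho$ equals, to leading order, the average of the mean curvature $H^\rho_0$ of the undeformed model $B_1$ in the rescaled metric $g_\rho$. The required extra decay should come from a careful Taylor expansion of $g - g_{flat}$ at $p$ along the regular part: exploiting either additional regularity of $g$ at the vertex (so that $\partial g$ vanishes at $p$ in adapted coordinates), or a parity/symmetry cancellation on $B_1$ combined with a re-centering of the model slice along the cone axis, one should force the first radial Taylor coefficient of $g - g_{flat}$ to integrate to zero over $B_1$, giving $\tilde\lambda_\rho = O(\rho^2)$ and hence $\lambda_\rho = O(\rho)$. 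This cancellation analysis, together with maintaining the uniform cornered Schauder estimates along the converging family $\{(M_\rho, g_\rho, B_1)\}$, is the primary technical burden of the proof.
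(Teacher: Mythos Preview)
Your approach is essentially the paper's: both set up an implicit function theorem over the fixed Euclidean polygon $B_1=\Sigma_1$, linearize to the flat Neumann problem, and invoke Lieberman's cornered Schauder theory (Theorem~\ref{theorem.elliptic.equation.over.cornered.domains}) for invertibility. The only cosmetic difference is that you rescale the metric to $g_\rho=\rho^{-2}g$, while the paper keeps $g$ fixed and rescales the surfaces, writing $\Sigma_{\rho,u}=\{\phi(\rho x,u(\rho x)):x\in\Sigma_1\}$; the two are equivalent.

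The step you flag as the ``primary technical burden''---upgrading $\tilde\lambda_\rho=O(\rho)$ to $o(\rho)$---is in fact immediate, and the paper dispatches it without any cancellation or re-centering. The point is exactly your first option: choose geodesic normal coordinates at $p$ (available since $g$ is smooth on $M$), so that $g_{ij,k}(0)=0$ and hence, by continuity, $g_{ij,k}(x)=o(1)$ as $x\to p$. In your rescaled picture this gives $\partial_k(g_\rho)_{ij}(y)=\rho\,g_{ij,k}(\rho y)=o(\rho)$ on compact sets, so the mean curvature of $B_1$ in $g_\rho$ and the boundary angle defect $\langle X,N\rangle-\cos\gamma_j$ are both $o(\rho)$; the IFT then forces $\|u_\rho\|=o(\rho)$ and $\tilde\lambda_\rho=o(\rho)$ directly. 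The paper makes this explicit by differentiating the IFT solution at $\rho=0$ to see $\partial_\rho u(\rho)|_{\rho=0}=0$, and then reading off $\lambda_\rho\to 0$ by integrating the mean-curvature equation over $\Sigma_1$. No parity argument, extra regularity, or re-centering is needed.
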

	
	\begin{rema}
		Before proceeding to the proof, let us remark that the local foliation structure of Riemannian manifolds has been a thematic program in geometric analysis, and has deep applications to mathematical general relativity. See: Ye \cite{ye1991foliation} for spherical foliations around a point; Huisken-Yau \cite{HuiskenYau96definition} for foliations in asymptotically flat spaces; Mahmoudi-Mazzeo-Pacard \cite{MazzeoPacard05foliations}\cite{MahmoudiMazzeoPacard06constant} for foliations around general minimal submanifolds.
	\end{rema}
	
	\begin{rema}
		As a technical remark, let us recall that in all of the aforementioned foliation results, some extra conditions are necessary (e.g. Ye's result required the center point to be a non-degenerate critical point of scalar curvature; Mahmoudi-Mazzeo-Pacard needed the minimal submanifold to be non-degenerate critical point for the volume functional). However, in our result, no extra condition is needed. Geometrically, this is because in the tangent cone $T_p M\subset\RR^3$, the desired foliation is \underline{unique}.
	\end{rema}
	
	\begin{proof}
		Let $U$ be a small neighborhood of $p$ in $M$. Take a local diffeomorphism $\varphi:P\rightarrow U$, such that the pull back metric $\varphi^*g$ and $g_{Euclid}$ are $C^1$ close. Place the vertex $p'$ of $P$ at the origin of $\RR^3$. In local coordinates on $\RR^3$, the above requirement is then equivalent to
		\[g_{ij}(0)=g_{ij,k}=0,\quad g_{ij}(x)=o(|x|), g_{ij,k}(x)=o(1) \text{ for $|x|<\rho_0$}.\]
		$\varphi$ may be constructed, for instance, via geodesic normal coordinates.
		
		Denote $\overline{M}\subset\RR^3$ the tangent cone of $M$ at $p$. By assumption, the dihedral angles $\measuredangle(F_i,F_j)\vert_p=\measuredangle(F_i',F_j')$. Let $\pi$ be the plane in $\RR^3$ such that in Euclidean metric, $\pi$ and $F_j$ meet at constant angle $\gamma_j$. For $\rho\in (0,1]$, let $\pi_\rho$ be the plane that is parallel to $\pi$ and has distance $\rho$ to $0$. Let $\Sigma_\rho$ be the intersection of $\pi_\rho$ with the interior of the cone $T_p M$. Denote $X$ the outward pointing unit normal vector field on $\partial \overline{M}$, $N_\rho$ the unit vector field of $\Sigma_\rho\subset \overline{M}$ pointing towards $0$. Denote $Y$ the vector field such that for each $x\in \Sigma_\rho$, $Y(x)$ is parallel to $\vec{x}$. Moreover, we require that the flow of $Y$ parallel translates $\{\Sigma_\rho\}$, and $Y(x)$ is tangent to $\partial \overline{M}$ when $x\in \partial\overline{M}$. Let $\phi(x,t)$ be the flow of $Y$. For a function $u\in C^{2,\alpha}(\Sigma_1)\cap C^{1,\alpha}(\overline{\Sigma_1})$ ($\Sigma_1$ is parallel to $\pi$, and of distance $1$ to the origin), define the perturbed surface
		\[\Sigma_{\rho,u}=\{\phi(\rho x,u(\rho x)):x\in \Sigma_1\}.\]
		Since $\Sigma_\rho=\rho\Sigma_1$, the surface $\Sigma_{\rho,u}$ is a small perturbation of $\Sigma_\rho$, and is properly embedded, if $|u|_0$ is sufficiently small.
		
		We use the subscript $\rho$ to denote geometric quantities related to $\Sigma_\rho$, and the subscript $(\rho,u)$ to denote geometric quantities related to the perturbed surfaces $\Sigma_{\rho,u}$, both in the metric $\varphi^*g$. In particular, $H_{\rho,u}$ denotes the mean curvature of $\Sigma_{\rho,u}$, and $N_{\rho,u}$ denotes the unit normal vector field of $\Sigma_{\rho,u}$ pointing towards $0$. It follows from Lemma \ref{lemma.appendix.1} and Lemma \ref{lemma.appendix.2} that we have the following Taylor expansion of geometric quantities.
		\begin{equation}\label{equation.perturbed.geometric.quantities}
		\begin{aligned}
		H_{\rho,u}&=H_\rho+\frac{1}{\rho^2}\Delta_\rho u+(\Ric(N_\rho,N_\rho)+|A_\rho|^2)u+L_1 u+Q_1(u)\\
		\bangle{X_{\rho,u},N_{\rho,u}}&=\bangle{X_\rho,u_\rho}-\frac{\sin\gamma_j}{\rho}\pa{u}{\nu_\rho}\\
		&\qquad+(\cos\gamma_j A(\nu_\rho,\nu_\rho)+\secondfund(\overline{\nu_\rho},\overline{\nu_\rho}))u+L_2 u+Q_2(u).
		\end{aligned}
		\end{equation}
		
		Let us explain \eqref{equation.perturbed.geometric.quantities} a bit more. $Q_1,Q_2$ are terms that are at least quadratic in $u$. The functions $L_1,L_2$ exhibit how the mean curvature $H_\rho$ and the contact angle $\gamma_j$ deviate from being constant. In particular, they are bounded in the following manner:
		\[L_1 \le C|\nabla_\rho H_\rho| |Y|\le C|g|_{C^2}<C,\quad L_2\le C|\nabla_\rho \bangle{X_\rho,N_\rho}| |Y|<C|g|_{C^1}<C.\]
		The operator $\Delta_\rho$ is the Laplace operator on $\Sigma_\rho$. At $x\in \Sigma_\rho$,
		\[\Delta_\rho= \frac{1}{\sqrt{\det(g)}} \partial_i \left(\sqrt{\det(g)}g^{ij}\partial_j\right).\]
		In particular, $\Delta_\rho$ converges to the Laplace operator on $\RR^2$ as $\rho\rightarrow 0$. In local coordinates, it is not hard to see that
		\[|H_\rho|\le C|g|_{C^1}=o(1),\quad |\bangle{X_\rho,N_\rho}-\cos\gamma_j|\le |g|_{C^0}=o(\rho).\]
		Denote $D_\rho=\bangle{X_\rho,N_\rho}-\cos\gamma_j$. Letting $H_{\rho,u}\equiv \lambda$, we deduce from \eqref{equation.perturbed.geometric.quantities} that we need to solve for $u$ from
		\begin{equation}\label{equation.for.u}
		\begin{cases}
		\Delta_\rho u+\rho^2L_1 u+\rho^2 Q_1(u)=\rho^2(\lambda-H_\rho) \quad\text{in $\Sigma_1$,}\\
		\pa{u}{\nu_\rho}=\rho D_\rho+\rho L_2u+\rho Q_2(u)\quad \text{on $\partial \Sigma_1$.}
		\end{cases}
		\end{equation}
		
		We use inverse function theorem as in the proof of Proposition \ref{prop.foliation.around.a.minimizer}. Precisely, denote the operator
		\[\begin{cases}\cL_\rho (u)=\Delta_\rho u-\rho^2 L_1u-\rho^2 Q_1(u)+\rho^2 H_\rho,\\ \cB_\rho(u)=\pa{u}{\nu_\rho}-\rho D_\rho-\rho L_2 u-\rho Q_2(u),\end{cases}\]
		and consider the Banach spaces
		\begin{equation*}
		\begin{gathered}
		F=\left\{u\in C^{2,\alpha}(\Sigma_1)\cap C^{1,\alpha}(\overline{\Sigma_1}):\int_{\Sigma_1} u=0\right\},\\
		G=\left\{u\in C^{0,\alpha}(\Sigma_1):\int_{\Sigma_1} u=0\right\}, H=\left\{u\in L^\infty(\partial \Sigma_1): u\vert_{\overline{L_j}} \in C^{0,\alpha}(\overline{L_j})\right\}.
		\end{gathered}
		\end{equation*}
		Again we use $L_1,\cdots,L_j$ to denote the edges of $\Sigma_1$.
		
		For a small $\delta>0$, let $\Psi:(-\ep,\ep)\times (B_\delta(0)\subset F)\rightarrow G\times H$ given by
		\[\Psi(\rho,u)=\left(\cL_\rho(u)-\frac{1}{|\Sigma_1|}\int_{\Sigma_1}\cL_\rho(u)d\cH^2,\cB_\rho(u)\right).\]
		By the asymptotic behavior as $\rho\rightarrow 0$ discussed above, the linearized operator $D_u\Psi\vert_{(0,0)}$, when restrited to $\{0\}\times F$, is given by
		\[D_u\Psi\vert_{(0,0)}(0,v)=\td{}{s}\bigg\vert_{s=0}\Psi(0,s v)=\left(\Delta v-\int_{\Sigma_1}\Delta v,\pa{v}{\nu}\right).\]
		By Theorem \ref{theorem.elliptic.equation.over.cornered.domains}, for some $\alpha\in (0,1)$, $D_u\Psi\vert_{(0,0)}$ is an isomorphism when restricted to $\{0\}\times F$. We therefore apply the inverse function theorem and conclude that, for small $\ep>0$, there exists a $C^1$ map between Banach spaces $\rho\in (-\ep,\ep)\mapsto u(\rho)\in B_\delta(0)\subset F$ for every $\rho\in (-\ep,\ep)$, such that $\Psi(\rho,u(\rho))=(0,0)$. Thus the surface $\Sigma_{\rho,u(\rho)}$ is minimal, and meets $F_j$ at constant angle $\gamma_j$.
		
		By definition, $u(0)$ is the zero function. Denote $v=\pa{u(\rho)}{\rho}$. Differentiating \eqref{equation.for.u} with respect to $\rho$ and evaluating at $\rho=0$, we deduce
		\begin{equation}
		\begin{cases}
		\Delta v=0 \quad\text{in $\Sigma_1$},\\
		\pa{v}{\nu}=0\quad\text{on $\partial \Sigma_1$}.
		\end{cases}
		\end{equation}
		Therefore $v$ is also the zero function. Thus we conclude that
		\[|u|_{1,\alpha,\overline{\Sigma_1}}=o(\rho),\]
		for $|\rho|<\rho_0$.
		
		Therefore the surfaces $\Sigma_{\rho,u(\rho)}$ is a foliation of a small neighborhood of $p$. Moreover, integrating \eqref{equation.for.u} over $\Sigma_1$, we find that the constant mean curvature of $\Sigma_{\rho,u(\rho)}$ satisfies
		\begin{equation}
		\begin{aligned}
		\lambda_\rho&=\frac{1}{\rho^2}\int_{\Sigma_1} \Delta u+ \int_{\Sigma_1}(L_1u+Q_1(u)+H_\rho)\\
		&=\frac{1}{\rho^2}\int_{\partial \Sigma_1} \pa{u}{\nu}+\int_{\Sigma_1}(L_1u+Q_1(u)+H_\rho)+o(1)\\
		&=\frac{1}{\rho}\int_{\partial \Sigma_1} (D_\rho+ L_2 u+Q_2(u))+\int_{\Sigma_1}(L_1u+Q_1(u)+H_\rho)+o(1).
		\end{aligned}
		\end{equation}
		
		Since
		\[D_\rho=o(\rho),\quad |u|_{1,\alpha,\overline{\Sigma_1}}=o(\rho),\quad H_\rho=o(1),\]
		we conclude that $\lambda_\rho\rightarrow 0$, as $\rho\rightarrow 0$.
	\end{proof}

	\subsection{Local splitting}
	We analyze the CMC capillary foliations developed above to prove a local splitting theorem, thus prove Theorem \ref{theorem.rigidity}. We need the extra assumption \eqref{extra.angle.assumption} that
	\[\gamma_j\le \pi/2, j=1,\cdots,k\quad \text{or}\quad\gamma_j\ge \pi/2,j=1,\cdots,k.\]
	First notice that, if $P\subset \RR^3$ is a cone, then \eqref{extra.angle.assumption} is possible only when $\gamma_j\le \pi/2$, $j=1,\cdots,k$; if $P$ is a prism and $\gamma_j>\pi/2$, then instead of \eqref{capillary.variational.problem}, we consider, for $E_1=M\setminus \overline{E}$,
	\begin{equation}
	\cF(E_1)=\cH^2(\partial E_1\cap \mathring{M})-\sum_{j=1}^k(\cos\gamma_j)\cH^2(\partial E_1\cap F_j),
	\end{equation}
	and reduce the problem to the case where $\gamma_j\le\pi/2$. Thus we always assume $\gamma_j\le \pi/2$, $j=1,\cdots,k$.
	
	Under the same conventions as before, assume we have a local CMC capillary foliation $\{\Sigma_\rho\}_{\rho\in I}$, where as $\rho$ increase, $\Sigma_\rho$ moves in the direction of $N_\rho$. We will take $I$ to be $(-\ep,\ep)$, $(-\ep,0)$ or $(0,\ep)$, according to the location of the foliation. We prove the following differential inequality for the mean curvature $H(\rho)$.
	
	\begin{prop}\label{proposition.differential.ineq.of.H}
		There exists a nonnegative continuous function $C(\rho)\ge 0$ such that
		\[H'(\rho)\ge C(\rho) H(\rho).\]
	\end{prop}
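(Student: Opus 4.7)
My approach is to test the linearization of mean curvature along the CMC capillary foliation against $1/w$, where $w$ is the lapse function, and then exploit the Gauss equation, the boundary identity $Q=\overline{H}/\sin\gamma_j-k_g$ proved in Section~3, and the Gauss--Bonnet formula to extract a favorable sign. Because $\{\Sigma_\rho\}$ is a genuine foliation, the lapse $w>0$ in a neighborhood of $\rho=0$, making the division harmless.

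\emph{Step 1 (evolution equations).} Let $w=w_\rho>0$ denote the lapse of the foliation on $\Sigma_\rho$. The first variation of mean curvature under normal deformation, combined with the fact that each leaf is CMC, yields the pointwise identity
\[
H'(\rho)=\Delta_{\Sigma_\rho}w+(|A|^2+\Ric(N,N))\,w.
\]
Differentiating the contact angle condition $\bangle{N_\rho,X}=\cos\gamma_j$ in $\rho$ produces the Robin-type boundary relation $\partial_\nu w=Qw$ on $\partial\Sigma_\rho\cap F_j$, where $Q=\frac{1}{\sin\gamma_j}\secondfund(\overline{\nu},\overline{\nu})+\cot\gamma_j\,A(\nu,\nu)$ as in \eqref{second.variation.formula}.

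\emph{Step 2 (test against $1/w$).} Dividing the pointwise equation by $w$, integrating over $\Sigma_\rho$, and using $\Delta w/w=\Delta\log w+|\nabla\log w|^2$ together with the divergence theorem (valid up to the $C^{1,\alpha}$ corners), we obtain
\[
H'(\rho)\int_{\Sigma_\rho}\frac{d\cH^2}{w}=\int_{\Sigma_\rho}|\nabla\log w|^2+\int_{\Sigma_\rho}(|A|^2+\Ric(N,N))+\int_{\partial\Sigma_\rho}Q.
\]
Substituting the Gauss equation $|A|^2+\Ric(N,N)=\tfrac{R}{2}+\tfrac12(|A|^2+H(\rho)^2)-K_\Sigma$ and the boundary identity $Q=\overline{H}/\sin\gamma_j-k_g$ of \eqref{boundary.geometric.equation} reduces the right-hand side to a sum of manifestly non-negative terms plus $-\int K_\Sigma-\int_{\partial\Sigma}k_g$.

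\emph{Step 3 (Gauss--Bonnet at the corners).} By Gauss--Bonnet on $\Sigma_\rho$,
\[
\int K_\Sigma+\int k_g=2\pi\chi(\Sigma_\rho)-\sum_j\bigl(\pi-\alpha_j(\rho)\bigr).
\]
Since the foliation is constructed so that the contact angles along $F_j$ remain exactly $\gamma_j$, Lemma~\ref{lemma.Euclidean.geometry} applied to the hypothesis $\measuredangle_{ij}(M,g)\le\measuredangle_{ij}(P,g_{\text{Euclid}})$ gives $\alpha_j(\rho)\le\alpha_j'$, with $\alpha_j'$ the corresponding corner angles of the base face of $P$. As $\sum_j(\pi-\alpha_j')=2\pi$ and $\chi(\Sigma_\rho)\le 1$ (the leaves are topological disks), this yields $-\int K_\Sigma-\int k_g\ge 0$. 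Combining with $R(g)\ge 0$, $\overline{H}\ge 0$, and the pointwise bound $|A|^2\ge \tfrac12 H(\rho)^2$ available because $H(\rho)$ is constant on $\Sigma_\rho$, we reach
\[
H'(\rho)\int_{\Sigma_\rho}\frac{d\cH^2}{w}\ \ge\ \tfrac12\,|\Sigma_\rho|\,H(\rho)^2,
\]
whence $H'(\rho)\ge C(\rho)H(\rho)$ with $C(\rho):=|\Sigma_\rho|\,|H(\rho)|\big/\bigl(2\int_{\Sigma_\rho}w^{-1}\bigr)\ge 0$, continuous in $\rho$.

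\emph{Main obstacle.} The decisive point is Step~3: the dihedral angle bound on $M$ must be transferred to a corner-angle bound on each $\Sigma_\rho$, uniformly in $\rho$. This is precisely the role of Lemma~\ref{lemma.Euclidean.geometry}, and it relies crucially on the fact that the capillary foliation fixes the contact angles $\gamma_j$ along every side face. A secondary technicality is ensuring that Gauss--Bonnet and all integrations by parts remain valid at the corners; this follows from the $C^{1,\alpha}$ regularity proven in Section~2, which makes $|A|$ square-integrable and $k_g$ integrable on $\partial\Sigma_\rho$. A minor sign-check is also needed to verify that the variation formula for $H$ along the chosen orientation has the sign stated above, using the paper's convention $H>0$ for the round sphere.
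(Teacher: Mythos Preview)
Your overall strategy---divide the Jacobi equation by the lapse, integrate, then feed in Gauss--Bonnet---is exactly the paper's. The gap is in Step~2: the boundary identity you quote, $Q=\overline{H}/\sin\gamma_j-k_g$, is \emph{only} valid on a minimal surface. Its derivation in Section~3 used $A(\nu,\nu)=-A(T,T)$, which relies on $H=0$. On a CMC leaf with mean curvature $H(\rho)$ one has $A(\nu,\nu)+A(T,T)=H(\rho)$, and redoing the computation gives
\[
Q+k_g \;=\; \frac{\overline{H}}{\sin\gamma_j}\;+\;(\cot\gamma_j)\,H(\rho)\qquad\text{on }\partial\Sigma_\rho\cap F_j.
\]
The extra term $(\cot\gamma_j)H(\rho)$ is precisely what the paper keeps: after discarding the nonnegative bulk terms one arrives at
\[
H'(\rho)\int_{\Sigma_\rho}\frac{1}{w}\;\ge\;\Bigl[\sum_{j}(\cot\gamma_j)\,\cH^1(\partial\Sigma_\rho\cap F_j)\Bigr]H(\rho),
\]
and this is where the standing hypothesis $\gamma_j\le\pi/2$ (hence $\cot\gamma_j\ge 0$) is actually used. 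Your write-up never invokes $\gamma_j\le\pi/2$, which is the tell.

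With the corrected identity your displayed inequality $H'(\rho)\int_{\Sigma_\rho}w^{-1}\ge \tfrac12|\Sigma_\rho|H(\rho)^2$ is in general \emph{false}: when $H(\rho)<0$ the additional boundary term $\sum_j(\cot\gamma_j)\cH^1(\partial\Sigma_\rho\cap F_j)\,H(\rho)$ is negative and can dominate the quadratic gain, so your proposed $C(\rho)=|\Sigma_\rho|\,|H(\rho)|/(2\int w^{-1})$ does not yield $H'\ge C\,H$. The fix is simply to drop the nonnegative term $\tfrac12\int(|A|^2+H^2)$ rather than the linear boundary term, and take $C(\rho)=\bigl[\sum_j(\cot\gamma_j)\cH^1(\partial\Sigma_\rho\cap F_j)\bigr]\big/\int_{\Sigma_\rho}w^{-1}$, as the paper does.
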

	
	\begin{proof}
		Let $\psi:\Sigma\times I\rightarrow M$ parametrizes the foliation. Denote $Y=\pa{\psi}{t}$. Let $v_\rho=\bangle{Y,N_\rho}$ be the lapse function. Then by Lemma \ref{lemma.appendix.1} and Lemma \ref{lemma.appendix.2}, we have
		\begin{align}
		&\td{}{\rho}H(\rho)=\Delta_\rho v_\rho+(\Ric(N_\rho,N_\rho)+|A_\rho|^2)v_\rho \quad\text{in $\Sigma_\rho$},\label{eq.derivative.of.mean.curvature}\\
		&\pa{v_\rho}{\nu_\rho}=\left[(\cot\gamma_j) A_\rho(\nu_\rho,\nu_\rho)+\frac{1}{\sin \gamma_j}\secondfund(\overline{\nu_\rho},\overline{\nu_\rho})\right]v_\rho \quad \text{on $\partial \Sigma_\rho\cap F_j$}.\label{eq.derivative.of.angle}
		\end{align}
		By shrinking the interval $I$ if possible, we may assume $v_\rho>0$ for $\rho\in I$. Multiplying $\frac{1}{v_\rho}$ on both sides of \eqref{eq.derivative.of.mean.curvature} and integrating on $\Sigma_\rho$, we deduce that
		\begin{equation}
		\begin{split}
		H&'(\rho)\int_{\Sigma_\rho}\frac{1}{v_\rho}=\int_{\Sigma_\rho}\frac{|\nabla v_\rho|^2}{v_\rho^2}d\cH^2+\frac{1}{2}\int_{\Sigma_\rho}(R+|A|^2+H^2)d\cH^2-\int_{\Sigma_\rho}K_{\Sigma_\rho}d\cH^2\\
		&\qquad\qquad\qquad+\sum_{j=1}^k\int_{\partial\Sigma_\rho\cap F_j}\left[\cot\gamma_j A_\rho(\nu_\rho,\nu_\rho)+\frac{1}{\sin\gamma_j}\secondfund(\overline{\nu_\rho},\overline{\nu_\rho})\right]d\cH^1\\
		&\ge -\int_{\Sigma_\rho}K_{\Sigma_\rho}d\cH^2+\sum_{j=1}^k\int_{\partial\Sigma_\rho\cap F_j}\left[\cot\gamma_j A_\rho(\nu_\rho,\nu_\rho)+\frac{1}{\sin\gamma_j}\secondfund(\overline{\nu_\rho},\overline{\nu_\rho})\right]d\cH^1.
		\end{split}
		\end{equation}
		Using the Gauss-Bonnet formula and Lemma \ref{lemma.Euclidean.geometry}, 
		\begin{equation}
		-\int_{\Sigma_\rho}K_{\Sigma_\rho}d\cH^2\ge \int_{\partial \Sigma_\rho}k_g d\cH^1.
		\end{equation}
		As in \eqref{boundary.geometric.equation}, we also have
		\begin{equation}
		k_g+\cot\gamma_j A(\nu_\rho,\nu_\rho)+\frac{1}{\sin\gamma_j}\secondfund(\overline{\nu_\rho},\overline{\nu_\rho})=(\cot \gamma_j) H(\rho)+\frac{1}{\sin \gamma_j}\overline{H},
		\end{equation}
		on $\partial\Sigma_\rho \cap F_j$. Combining these, we deduce
		
		\begin{equation}
		\begin{split}
		H'(\rho)\int_{\Sigma_\rho}\frac{1}{v_\rho}&\ge \sum_{j=1}^k\int_{\partial \Sigma_\rho\cap F_j}\left[(\cot\gamma_j) H(\rho)+\frac{1}{\sin\gamma_j}\overline{H}\right]d\cH^1\\
		&\ge \left[\sum_{j=1}^k(\cot\gamma_j)\cH^1(\partial \Sigma_\rho\cap F_j)\right]H(\rho).
		\end{split}
		\end{equation}
		Take $C(\rho)=\sum_{j=1}^k(\cot\gamma_j)\cH^1(\partial \Sigma_\rho\cap F_j)$. The proposition is proved.
		
	\end{proof}
	
	We are now ready to prove Theorem \ref{theorem.rigidity}.
	
	\begin{proof}
		If $(M^3,g)$ is of prism type, or if $(M^3,g)$ is of cone type with $\cI<0$, then the variational problem \eqref{capillary.variational.problem} has a nontrivial solution $E$ with a $C^{1,\alpha}$ boundary $\Sigma$. Therefore $\Sigma$ is infinitesimally rigid minimal capillary, and there is a CMC capillary foliation $\{\Sigma_\rho\}_{I}$ around $\Sigma$, where $I=(-\ep,\ep)$ if $\Sigma\subset \mathring{M}$, $I=[0,\ep)$ if $\Sigma=B_1$, and $I=(-\ep,0]$ if $\Sigma=B_2$. By Proposition \ref{proposition.differential.ineq.of.H}, the mean curvature $H(\rho)$ of $\Sigma_\rho$ satisfies
		\[\begin{cases}H(0)=0\\H'(\rho)\ge C(\rho) H(\rho),\end{cases}\]
		where $C(\rho)\ge 0$. By standard ordinary differential equation theory, 
		\[H(\rho)\ge 0 \text{ when $\rho\ge 0$},\quad H(\rho)\le 0 \text{ when $\rho\le 0$}.\]
		
		Denote $E_\rho$ the corresponding open domain in $M$. Since each $\Sigma_\rho$ meets $F_j$ at constant angle $\gamma_j$, the first variation formula \eqref{first.variation.for.regular.surfaces} implies that
		\[F(\rho_1)-F(\rho_2)=-\int_{\rho_2}^{\rho_1}d\rho\int_{\Sigma_\rho}H(\rho)v_\rho d\cH^2.\]
		We then conclude that for $\delta>0$,
		\[F(\delta)\le F(0),\qquad F(-\delta)\le F(0).\]
		However, $\Sigma_0=\Sigma$ minimizes the functional \eqref{capillary.variational.problem}. Therefore in a neighborhood of $\Sigma$, $F(\rho)=F(0)$, $H(\rho)\equiv 0$. Tracing back the equality conditions, we find that
		\[v_\rho\equiv \text{constant}, \quad \text{each $\Sigma_\rho$ is infinitesimally rigid}.\]
		
		It is then straightforward to check that the normal vector fields of $\Sigma_\rho$ is parallel (see \cite{BrayBrendleNeves10rigidity} or \cite{MicallefMoraru15splitting}). In particular, its flow is a flow by isometries and therefore provides the local splitting. Since $M$ is connected, this splitting is also global, and we conclude that $(M^3,g)$ is isometric to a flat polyhedron in $\RR^3$.
		
		If $(M^3,g)$ is of cone type with $\cI=0$, then by Theorem \ref{theorem.foliation.near.a.vertex}, there is a CMC capillary foliation $\{\Sigma_\rho\}_{\rho\in (-\ep,0)}$ near the vertex, with $H(\rho)\rightarrow 0$ as $\rho\rightarrow 0$. By Proposition \ref{proposition.differential.ineq.of.H}, the mean curvature $H(\rho)$ satisfies
		\[\begin{cases}H'(\rho)\ge C(\rho)H(\rho)\quad \rho\in (-\ep,0)\\H(\rho)\rightarrow 0\quad \rho\rightarrow 0.\end{cases}\]
		Since $C(\rho)\ge 0$, we conclude that $H(\rho)\le 0$, $\rho\in (-\ep,0)$. Let $E_\rho$ be the open subset bounded by $\Sigma_\rho$. Take $0<\eta<\delta$, then
		\[F(-\eta)-F(-\delta)=-\int_{-\delta}^{-\eta} d\rho \int_{\Sigma_\rho}H v_\rho d\cH^2\ge 0\quad\Rightarrow\quad F(-\delta)\le F(-\eta).\]
		Letting $\eta\rightarrow 0$, we have
		\[F(-\delta)\le 0.\]
		As before, we conclude that $F(\rho)\equiv 0$ for $\rho\in (-\ep,0)$, and that each leaf $\Sigma_\rho$ is infinitesimally rigid. Thus $(M^3,g)$ admits a global splitting of flat $k$-polygon in $\RR^2$, and hence is isometric to a flat polyhedron in $\RR^3$.
	\end{proof}
	
	\appendix
	\section{}
	We provide some general calculation for infinitesimal variations of geometric quantities of properly immersed hypersurfaces under variations of the ambient manifold $(M^{n+1},g)$ that leave the boundary of the hypersurface inside $\partial M$. We also refer the readers to the thorough treatment in \cite{RosSouam97capillarystability} and \cite{Ambrozio15rigidity} (warning: the choice of orientation for the unit normal vector field $N$ in \cite{Ambrozio15rigidity} is the opposite to ours).
	
	We keep the notations used in Section 2.1 and for each $t\in(-\ep,\ep)$, we use the subscript $t$ for the terms related to $\Sigma_t$. Recall that $Y=\pa{\Psi(t,\cdot)}{t}$ is the deformation vector field. Denote $Y_0$ the tangent part of $Y$ on $\Sigma$, $Y_0$ the tangent part of $Y$ on $\partial \Sigma$. Let $v=\bangle{Y,N}$. For $q\in \Sigma$, let $e_1,\cdots,e_n$ be an orthonormal basis of $T_q\Sigma$, and let $e_i(t)=d\Psi_t(e_i)$. Let $S_0,S_1$ be the shape operators of $\Sigma\subset M$ and $\partial M\subset M$. Precisely, $S_0(Z_1)=-\nabla_{Z_1}N$, $S_1(Z_2)=\nabla_{Z_2}X$. We have:
	
	\begin{lemm}[Lemma 4.1(1) of \cite{RosSouam97capillarystability}, Proposition 15 of \cite{Ambrozio15rigidity}]\label{lemma.appendix.1}
		\begin{equation}
		\nabla_Y N=-\nabla^\Sigma v-S_0(Y_0).
		\end{equation}
	\end{lemm}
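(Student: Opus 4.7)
The plan is a direct computation that unfolds in three short steps. First, since $\langle N,N\rangle\equiv 1$ along the flow of $Y$, differentiating gives $\langle \nabla_Y N,N\rangle=0$, so $\nabla_Y N$ is tangent to $\Sigma$ and it suffices to compute its components against the frame $e_1,\dots,e_n$.

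Second, I would exploit the fact that $e_i(t)=d\Psi_t(e_i)$ is tangent to $\Sigma_t$ for every $t$, so that $\langle N_t,e_i(t)\rangle\equiv 0$. Differentiating this identity in $t$ at $t=0$ yields
\[
\langle \nabla_Y N,e_i\rangle=-\langle N,\nabla_Y e_i(t)\rangle.
\]
Because $e_i(t)$ is defined as the pushforward of $e_i$ under the flow of $Y$, we have $[Y,e_i(t)]=0$, hence $\nabla_Y e_i(t)=\nabla_{e_i}Y$ at $t=0$.

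Third, I would plug in the decomposition $Y=Y_0+vN$ and compute
\[
\langle N,\nabla_{e_i}Y\rangle=\langle N,\nabla_{e_i}Y_0\rangle+e_i(v)+v\langle N,\nabla_{e_i}N\rangle.
\]
The last term vanishes because $\langle N,N\rangle=1$. For the first term, using that $\langle N,Y_0\rangle=0$ on $\Sigma$ and the definition $S_0(e_i)=-\nabla_{e_i}N$, one finds
\[
\langle N,\nabla_{e_i}Y_0\rangle=-\langle \nabla_{e_i}N,Y_0\rangle=\langle S_0(e_i),Y_0\rangle=\langle S_0(Y_0),e_i\rangle,
\]
by symmetry of the shape operator. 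Combining the pieces gives
\[
\langle \nabla_Y N,e_i\rangle=-\langle S_0(Y_0),e_i\rangle-e_i(v)=-\langle\nabla^\Sigma v+S_0(Y_0),e_i\rangle,
\]
and summing against $e_i$ yields the claimed identity. There is no serious obstacle here: the main care is with the sign/orientation convention for $S_0$ (the paper takes $S_0(Z)=-\nabla_Z N$, consistent with $A(X_1,X_2)=\langle \nabla_{X_1}X_2,N\rangle$) and with the fact that $Y$ need not be tangent to $\Sigma$, which is why the $\nabla^\Sigma v$ term appears from differentiating $v=\langle Y,N\rangle$ along the frame.
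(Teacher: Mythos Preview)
Your proof is correct and is the standard direct computation for this identity. The paper itself does not supply a proof of this lemma; it simply cites \cite{RosSouam97capillarystability} and \cite{Ambrozio15rigidity}, so there is nothing to compare against beyond noting that your argument matches the usual one found in those references.
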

	
	We use Lemma \ref{lemma.appendix.1} to calculate the evolution of the contact angle along the boundary.
	
	\begin{lemm}\label{lemma.appendix.2}
		Let $\gamma$ denote the contact angle between $\Sigma$ and $F_j$. Then
		\begin{equation}
		\td{}{t}\bigg\vert_{t=0}\bangle{N_t,X_t}=-\sin\gamma \pa{v}{\nu}+(\cos\gamma) A(\nu,\nu)v+\secondfund(\overline{\nu},\overline{\nu})v+\bangle{L,\nabla^{\partial\Sigma}\gamma_j}v,
		\end{equation}
		where $L$ is a bounded vector field on $\partial\Sigma$.
		
		In particular, if each $\Sigma_t$ meets $F_j$ at constant angle $\gamma_j$, then on $F_j$,
		\[\pa{v_t}{\nu_t}=\left[(\cot \gamma_j) A_t(\nu_t,\nu_t)+\frac{1}{\sin\gamma_j}\secondfund(\overline{\nu_t},\overline{\nu_t})\right]v_t.\]
	\end{lemm}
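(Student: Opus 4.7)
The plan is to differentiate $\bangle{N_t,X_t}$ along the curve $t\mapsto\Psi_t(q)$ at $t=0$ by the Levi--Civita product rule, yielding
\[
\frac{d}{dt}\bigg\vert_{t=0}\bangle{N_t,X_t}=\bangle{\nabla_Y N,X}+\bangle{N,\nabla_Y X},
\]
and to evaluate each piece via the orthonormal decompositions at a point of $\partial\Sigma\cap F_j$ inside the $2$-plane perpendicular to the tangent $T$ of $\partial\Sigma$. Fixing the convention $\bangle{N,X}=\cos\gamma$, one has $X=(\cos\gamma)\,N+(\sin\gamma)\,\nu$ and, dually, $N=(\cos\gamma)\,X-(\sin\gamma)\,\overline\nu$.

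For the first piece I would substitute Lemma \ref{lemma.appendix.1} to get $\nabla_Y N=-\nabla^\Sigma v-S_0(Y_0)$ and pair with $X$. Since $T\perp X$, the $X$-component of $\nabla^\Sigma v$ is $(\sin\gamma)\partial_\nu v$, contributing $-\sin\gamma\,\partial_\nu v$. To evaluate $\bangle{S_0(Y_0),X}=(\sin\gamma)A(Y_0,\nu)$, I would use that $Y$ is tangent to $\partial M$ along $\partial\Sigma$: writing $Y=Y_T T+Y_{\overline\nu}\overline\nu$, the identity $v=\bangle{Y,N}$ together with $\bangle{\overline\nu,N}=-\sin\gamma$ forces $Y_{\overline\nu}=-v/\sin\gamma$, and then the $\nu$-component of $Y_0=Y-v\,N$ is $-v\cot\gamma$. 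This produces the desired $(\cos\gamma)A(\nu,\nu)\,v$ contribution, plus a tangential remainder proportional to $Y_T A(T,\nu)$.

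For the second piece, since $Y$ is tangent to $\partial M$ the identity $\bangle{\nabla_Y X,W}=\secondfund(Y,W)$ holds for all $W$ tangent to $\partial M$; pairing with $N=(\cos\gamma)X-(\sin\gamma)\overline\nu$ gives $\bangle{N,\nabla_Y X}=-(\sin\gamma)\secondfund(Y,\overline\nu)$, and expanding $Y$ in $\{T,\overline\nu\}$ yields the $\secondfund(\overline\nu,\overline\nu)\,v$ contribution together with a second tangential remainder proportional to $Y_T\secondfund(T,\overline\nu)$. Differentiating the contact-angle relation $\bangle{N,X}=\cos\gamma_j$ tangentially along $\partial\Sigma$ shows that $A(T,\nu)+\secondfund(T,\overline\nu)$ is, up to a sign, $\partial_T\gamma_j$; collecting the two tangential remainders into a single expression of the form $\bangle{L,\nabla^{\partial\Sigma}\gamma_j}\,v$ completes the derivation of the full first-order variation formula.

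For the ``in particular'' part, once each $\Sigma_t$ meets $F_j$ at the constant angle $\gamma_j$, both the left-hand side and $\nabla^{\partial\Sigma}\gamma_j$ vanish, and the identity reduces to $(\sin\gamma_j)\partial_\nu v=(\cos\gamma_j)A(\nu,\nu)\,v+\secondfund(\overline\nu,\overline\nu)\,v$, which is the stated Robin condition after dividing by $\sin\gamma_j\neq 0$. The only real obstacle is careful bookkeeping of sign conventions in the orthogonal decompositions of $X,\overline\nu,N,\nu$ inside $T^\perp$ — cleanest to verify in a two-dimensional cross-section of the contact line — after which the rest is linear algebra and a single application of Lemma \ref{lemma.appendix.1}.
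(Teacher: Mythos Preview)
Your proposal is correct and follows essentially the same route as the paper: the product rule on $\bangle{N_t,X_t}$, Lemma \ref{lemma.appendix.1} for $\nabla_Y N$, the orthogonal decompositions $X=(\cos\gamma)N+(\sin\gamma)\nu$ and $N=(\cos\gamma)X-(\sin\gamma)\overline\nu$, and the splitting $Y=Y_T T-(v/\sin\gamma)\overline\nu$ along $\partial M$. The only organizational difference is in collecting the tangential remainders: the paper expands $\bangle{\nabla_{Y_1}N,\nu}$ via the change-of-basis formulas (so the $\gamma$-derivatives appear from differentiating $\cos\gamma,\sin\gamma$), whereas you obtain $A(T,\nu)+\secondfund(T,\overline\nu)=\partial_T\gamma$ directly by differentiating the contact-angle identity $\bangle{N,X}=\cos\gamma$ along $\partial\Sigma$ --- an equivalent and slightly more transparent bookkeeping.
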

	
	\begin{proof}
		Let us fix one boundary face $F_j$ and denote $\gamma_j$ by $\gamma$. By Lemma \ref{lemma.appendix.1}, 
		\[\begin{split}
		\td{}{t}\bigg\vert_{t=0}\bangle{N_t,X_t}&=\bangle{\nabla_Y N,X}+\bangle{N,\nabla_Y X}\\
		&=-\bangle{\nabla^\Sigma v,X}-\bangle{S_0(Y_0),X}+\bangle{N,\nabla_Y X}.\end{split}\]
		On $\partial M$, $Y$ decomposes into $Y=Y_1-\frac{v}{\sin\gamma}\overline{\nu}$. Notice that since $X=\cos\gamma N+\sin\gamma N$, 
		\[\bangle{S_0(Y_0),X}=\bangle{S_0(Y_0),\cos\gamma N+\sin\gamma \nu}=\sin\gamma A(Y_0,\nu).\]
		
		We also have the vector decomposition on $\partial M$ with respect to the orthonormal basis $\overline{\nu},X$:
		\begin{equation}\label{appendix.vector.decomposition}
		N=\cos\gamma X-\sin\gamma \overline{\nu},\qquad \nu=\cos\gamma \overline{\nu}+\sin\gamma X.
		\end{equation}
		Since $\bangle{X,X}=1$ along $\partial M$, we have $\bangle{X,\nabla_Z X}=0$ for any vector $Z$ on $\partial M$. We have
		\[\begin{split}
		\td{}{t}\bigg\vert_{t=0}\bangle{N_t,X_t}&=-\sin\gamma\pa{v}{\nu}-\bangle{S_0(Y_0),X}\\
		&\qquad\qquad+\bangle{\cos\gamma X-\sin\gamma \overline{\nu},\nabla_{Y_1-\frac{v}{\sin\gamma}\overline{\nu}}X}\\
		&=-\sin\gamma\pa{v}{\nu}-\sin\gamma A(Y_0,\nu)-\sin\gamma \bangle{\overline{\nu},\nabla_{Y_1}X}+\bangle{\overline{\nu},\nabla_{\overline{\nu}}X}v.
		\end{split}\]
		
		Now we deal with the second and the third terms above. Notice that on $\partial \Sigma\cap F_j$,
		\[Y_0=Y_1-(\cot\gamma) v\nu.\]
		Thus $A(Y_0,\nu)=A(Y_1,\nu)-(\cot\gamma) v A(\nu,\nu)=-\bangle{\nabla_{Y_1}N,\nu}-(\cot \gamma) A(\nu,\nu)v$. On the other hand, using the vector decomposition \eqref{appendix.vector.decomposition}, we find
		\[\begin{split}
		\bangle{\nabla_{Y_1}N,\nu}&=\bangle{\nabla_{Y_1}(\cos\gamma X-\sin\gamma \overline{\nu}),\cos\gamma\overline{\nu}+\sin\gamma X}\\
		&=\cos^2\gamma\bangle{\nabla_{Y_1}X,\overline{\nu}}-\sin^2\gamma\bangle{\nabla_{Y_1}\overline{\nu},X}+\bangle{L,\nabla^{\partial\Sigma} \gamma}.\\
		&=\bangle{\nabla_{Y_1}X,\overline{\nu}}+\bangle{L,\nabla^{\partial\Sigma} \gamma}.
		\end{split}\]
		Here $L$ is a vector field along $\partial \Sigma$, and $|L|\le C=C(Y,X,\nu)$. Thus we conclude that
		
		\[\td{}{t}\bigg\vert_{t=0}\bangle{N_t,X_t}=-\sin\gamma\pa{v}{\nu}+(\cos\gamma)A(\nu,\nu)v+\secondfund(\overline{\nu},\overline{\nu})v+\bangle{L,\nabla^{\partial\Sigma}\gamma},\]
		as desired.
		
	\end{proof}
	
	The evolution equation of the mean curvature has been studied in many circumstances. We refer the readers to the thorough calculation in Proposition 16, \cite{Ambrozio15rigidity}:
	
	\begin{lemm}[Proposition 16 of \cite{Ambrozio15rigidity}]\label{lemma.appendix.3}
		Let $H_t$ be the mean curvature of $\Sigma_t$. Then
		\[\td{}{t}\bigg\vert_{t=0} H_t=\Delta_\Sigma v+(\Ric(N,N)+|A|^2)v-\bangle{\nabla_\Sigma H,Y_0}.\]
		In particular, if each $\Sigma_t$ has constant mean curvature, then
		\[\td{}{t} H_t=\Delta_{\Sigma_t} v_t+(\Ric(N_t,N_t)+|A_t|^2)v_t.\]
	\end{lemm}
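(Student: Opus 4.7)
The plan is to split the variation vector field into normal and tangential components, reduce to the purely normal case via a reparameterization, and then carry out the standard moving-frame computation.

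First I would write $Y = vN + Y_0$ with $v = \bangle{Y, N}$ and $Y_0 \in T\Sigma$, and introduce the local flow $\phi_s: \Sigma \to \Sigma$ of $Y_0$ (defined for small $s$). Setting $\widetilde\Psi_t := \Psi_t \circ \phi_{-t}$ gives a new parameterization of the same family $\Sigma_t = \Psi_t(\Sigma)$ whose initial velocity is now purely normal, equal to $vN$. Since $H_t(q) = \widetilde H_t(\phi_t(q))$ by construction, the chain rule at $t=0$ yields a decomposition
\[\td{}{t}\bigg\vert_{t=0} H_t = \td{}{t}\bigg\vert_{t=0}\widetilde H_t \pm \bangle{\nabla_\Sigma H, Y_0},\]
where the sign of the tangential term depends on orientation conventions. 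This term drops out in the ``in particular'' case of constant-mean-curvature foliations and at the infinitesimally rigid minimal surfaces considered in Section 4, which is exactly how the lemma is used.

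Next, for the purely normal variation $\partial_t = vN$, I would pick local coordinates $\{\partial_i\}$ on $\Sigma$ and extend them by the flow so that $[\partial_t, \partial_i] = 0$, then work at a chosen point in normal coordinates. The two key ingredients are Lemma \ref{lemma.appendix.1}, which provides $\nabla_{vN} N = -\nabla_\Sigma v$, and the ambient Ricci identity
\[\nabla_{\partial_t}\nabla_i\partial_j = \nabla_i\nabla_j(vN) + \overline{R}(vN, \partial_i)\partial_j.\]
A short computation then gives the evolution equations $\partial_t g_{ij} = -2vh_{ij}$ and
\[\partial_t h_{ij} = \nabla_i\nabla_j v - v\, h_{ik}h^k{}_j + v\,\overline{R}(N, \partial_i, \partial_j, N).\]
Tracing $H = g^{ij}h_{ij}$ with $\partial_t g^{ij} = 2vh^{ij}$ and summing $\sum_i \overline{R}(N, e_i, e_i, N) = \Ric(N, N)$ yields the stated formula, the two $|A|^2$ contributions combining as $2v|A|^2 - v|A|^2 = v|A|^2$.

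There is no substantive obstacle here beyond careful sign bookkeeping; the computation is a classical first-variation exercise, and the reparameterization trick is precisely what isolates the normal-velocity part from the tangential reparameterization contribution. The ``in particular'' clause is immediate: constancy of $H_t$ on each leaf forces $\nabla_{\Sigma_t} H_t \equiv 0$ and kills the tangential correction, leaving exactly the normal-variation formula applied to each $\Sigma_t$.
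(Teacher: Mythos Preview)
The paper does not actually prove this lemma; it simply refers the reader to Proposition~16 of \cite{Ambrozio15rigidity}. Your sketch is the standard and correct route: the reparameterization $\widetilde\Psi_t=\Psi_t\circ\phi_{-t}$ cleanly isolates the purely normal variation, and the moving-frame computation using $[\partial_t,\partial_i]=0$ together with the Ricci identity produces the Jacobi-operator term $\Delta_\Sigma v+(\Ric(N,N)+|A|^2)v$ after tracing. Your bookkeeping of the two $|A|^2$ contributions is right.

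One small remark on the tangential term: from your own identity $H_t(q)=\widetilde H_t(\phi_t(q))$, the chain rule gives $\partial_t H_t=\partial_t\widetilde H_t+\bangle{\nabla_\Sigma H,Y_0}$, i.e.\ a \emph{plus} sign, whereas the lemma as stated carries a minus. This discrepancy is most likely a convention artifact (the paper warns that Ambrozio uses the opposite orientation for $N$), and you were right to flag it rather than force a sign. As you observe, the term vanishes in every application in the paper (infinitesimally rigid $\Sigma$ has $H\equiv 0$, and the CMC leaves have $\nabla_{\Sigma_t}H_t\equiv 0$), so nothing downstream is affected.
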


	\bibliography{bib} 
	\bibliographystyle{amsalpha}
	
\end{document}